\documentclass[hidelinks,onefignum,onetabnum]{siamart251216}

\usepackage{lipsum}
\usepackage{amsfonts}
\usepackage{graphicx}
\usepackage{epstopdf}

\usepackage{algorithmic}
\ifpdf
  \DeclareGraphicsExtensions{.eps,.pdf,.png,.jpg}
\else
  \DeclareGraphicsExtensions{.eps}
\fi
\usepackage{empheq}
\usepackage{amsmath}

\newsiamremark{remark}{Remark}
\newsiamremark{hypothesis}{Hypothesis}
\crefname{hypothesis}{Hypothesis}{Hypotheses}
\newsiamthm{claim}{Claim}
\newsiamremark{fact}{Fact}
\crefname{fact}{Fact}{Facts}

\headers{Arbitrarily High-Order SP-PFEM for geometric PDEs}{Weizhu Bao, Yifei Li, and Dongmin Wang }

\title{Arbitrarily High-Order Structure-Preserving Parametric Finite Element Methods for Geometric PDEs via Local Pullback Flow Maps
\thanks{Submitted to the editors Date.
}}

\author{
Weizhu Bao\thanks{Department of Mathematics, National University of Singapore,
Singapore 119076 (\email{matbaowz@nus.edu.sg}, \email{e1583440@u.nus.edu}).}
\and
Yifei Li\thanks{Mathematisches Institut, Universit{\"a}t T{\"u}bingen, Auf der Morgenstelle 10, 72076 T{\"u}bingen, Germany (\email{yifei.li@mnf.uni-tuebingen.de})}
\and
Dongmin Wang\footnotemark[2]
}

\usepackage{amsopn}

\usepackage{amssymb}

\ifpdf
\hypersetup{
  pdftitle={Structure-Preserving Parametric Finite Element Methods via Local Pullback Flow Maps},
  pdfauthor={}
}
\fi

\begin{document}

\maketitle

\begin{abstract}
We develop arbitrarily high-order structure-preserving parametric finite element methods for geometric PDEs including surface diffusion and volume-preserving mean curvature flow based on local pullback flow maps. The central idea is to reformulate the geometric PDEs on an arbitrary intermediate hypersurface and represent the subsequent evolution through a local pullback flow map. Using admissible local pullbacks, we derive two formulations from the corresponding pullback identities for the Laplace–Beltrami operator, referred to as the conformal and equidistribution formulations. We discretize both formulations using arbitrary-degree isoparametric finite elements and Runge--Kutta collocation methods with positive weights. For the conformal formulation, weighted averages of the normal--Jacobian vector ensure exact preservation of the enclosed area or volume, while algebraic stability of the Runge--Kutta method additionally guarantees dissipation of the perimeter or surface area. For the equidistribution formulation, a path-averaged normal--Jacobian vector and a geometric discrete gradient guarantee both exact preservation of the enclosed area or volume and dissipation of the perimeter or surface area, without requiring algebraic stability. Numerical experiments confirm the expected high-order accuracy and structure-preserving properties.
\end{abstract}

\begin{keywords}
Geometric PDEs, local pullback flow map, arbitrarily high-order, parametric finite element method, geometric structure preservation
\end{keywords}

\begin{MSCcodes}
65M60, 65M12, 65L06, 53E10
\end{MSCcodes}

\section{Introduction}

Geometric partial differential equations (PDEs) or flows arise in a wide range of applications in materials science, physics, multiphase flows, and related fields.
Typical examples include surface diffusion (SD) and volume-preserving mean curvature flow (VP-MCF) 
as well as many other geometric flows. In fact, surface diffusion describes mass transport along material surfaces driven by curvature variations and plays an important role in thermal grooving \cite{Mullins1957}, epitaxial growth \cite{KesslerLevineSander1992}, and solid-state dewetting of thin films \cite{Thompson2012,WangJiangBaoSrolovitz2015,BaoJiangWangZhao2017,ZhaoJiangBao2020}. 
Volume-preserving mean curvature flow and related constrained curvature flows arise in attachment-limited kinetics and coarsening \cite{MugnaiSeis2013,MugnaiSeisSpadaro2016}, phase separation \cite{RubinsteinSternberg1992,BronsardStoth1997,ChenHilhorstLogak2010}, shape recovery and image processing \cite{CapuzzoDolcettaFinziVitaMarch2002}, and constrained shape control \cite{LaurainWalker2021}.

Let $\Gamma(t)\subset\mathbb R^d$ with $d\in\{2,3\}$ be a smooth closed hypersurface with outward unit normal $\boldsymbol n$, mean curvature $\mathcal H$, and normal velocity $V$. The two geometric flows are governed by
\begin{equation}
V= \begin{cases} \Delta_{\Gamma(t)}\mathcal H, & \text{for SD},\\ 
-\mathcal H+\overline{\mathcal H}, & \text{for VP-MCF},
\end{cases}
\qquad
\overline{\mathcal H}:=\frac{1}{|\Gamma(t)|}\int_{\Gamma(t)}\mathcal H\,\mathrm d\Gamma(t),
\label{eq:intro-geometric-flows}
\end{equation}
where $\Delta_{\Gamma(t)} = \nabla_{\Gamma(t)} \cdot \nabla_{\Gamma(t)}$ denotes the Laplace--Beltrami operator on $\Gamma(t)$. It is well known that both flows possess two fundamental geometric properties: (i) preservation of the area/volume enclosed by the closed curve/surface, and (ii) dissipation of the total perimeter/surface area \cite{BarrettGarckeNuernberg2007,BarrettGarckeNurnberg2007b}. These geometric structures provide important criteria for the design of stable and accurate numerical methods.

Parametric finite element methods (PFEMs) have been widely used for geometric evolution equations including solid-state dewetting \cite{ZhaoJiangBao2021,BaoZhao2023}. An early PFEM for mean curvature flow was proposed by Dziuk \cite{Dziuk1991}. B\"ansch et al.\ later developed a parametric finite element method for surface diffusion based on a mixed variational formulation \cite{BanschMorinNochetto2005}. Barrett, Garcke, and N\"urnberg (BGN) established a systematic PFEM framework for a wide class of geometric flows \cite{BarrettGarckeNuernberg2007,BarrettGarckeNurnberg2007b,BarrettGarckeNuernberg2008}. With a parametrization $\boldsymbol X(\cdot,t)$ of $\Gamma(t)$, the BGN formulation couples the normal-velocity equation with the curvature identity,
\begin{equation}
\boldsymbol n\cdot\partial_t\boldsymbol X=V,
\qquad
\mathcal H\boldsymbol n=-\Delta_{\Gamma(t)}\boldsymbol X,
\label{eq:intro-bgn}
\end{equation}
and its discretization combines energy stability with favorable tangential mesh redistribution. More recently, Bao and Zhao \cite{BaoZhao2021} proposed a first-order structure-preserving PFEM for surface diffusion by introducing a time-weighted normal between two consecutive hypersurfaces, which yields exact preservation of the enclosed area or volume together with energy dissipation. This structure-preserving technique has since been applied to a broader range of geometric PDEs \cite{BaoLi2024Anisotropic, BaoLi2025Unified, BaoGarckeNuernbergZhao2023}.

Motivated by the limited temporal accuracy of such first-order schemes, higher-order approximations of geometric flows have received considerable attention in recent years. In space, Garcke et al.\ \cite{GarckeNuernbergPraetoriusZhang2025} developed arbitrary-degree isoparametric finite element methods for mean curvature flow and surface diffusion. In time, Jiang et al.\ proposed several higher-order BGN-based schemes, including second-order Crank--Nicolson--leapfrog, higher-order BDF, and predictor--corrector methods \cite{JiangSuZhang2024SecondOrder,JiangSuZhang2024BDF,JiangSuZhangZhang2025}. On the analytical side, high-order and optimal-order error estimates are available for several geometric flows and related problems \cite{KovacsLiLubich2019,KovacsLiLubich2021,HuLi2022,Li2026convergence}.

These developments improve the accuracy and convergence theory, but higher-order accuracy does not by itself ensure preservation of the underlying geometric structures. Recent studies have therefore increasingly focused on high-order structure-preserving discretizations. Garcke et al.\ \cite{GarckeJiangSuZhang2025} introduced Lagrange-multiplier formulations to enforce volume preservation and energy dissipation within higher-order time discretizations. Pan et al.\ \cite{PanJiaZhang2026} obtained second-order structure-preserving PFEMs by exploiting exact geometric identities along a quadratic temporal path, while Zhang et al.\ \cite{ZhangAndrewsFarrell2026} constructed arbitrary-order schemes using a dual-MDR formulation and continuous Petrov--Galerkin time discretization. Related developments can also be found in \cite{LiGuoBi2025,GuoLi2025,GaoLi2025,DuanYang2026}. Building on these advances, the construction of arbitrarily high-order structure-preserving and energy-stable parametric finite element methods remains an important problem.

In this paper, we develop arbitrarily high-order structure-preserving PFEMs for SD and VP-MCF based on local pullback flow maps. The main obstacle lies in the temporal discretization. High-order time discretizations, such as Runge--Kutta methods, generally require the equations to be posed on a fixed domain, whereas for geometric flows the domain is the evolving hypersurface itself. Moreover, the curvature identity
$\mathcal H\boldsymbol n=-\Delta_{\Gamma(t)}\boldsymbol X$ in
\eqref{eq:intro-bgn} involves the Laplace--Beltrami operator on the current
hypersurface $\Gamma(t)$. To pose \eqref{eq:intro-bgn} on a common fixed domain and develop high-order structure-preserving methods, the Laplace--Beltrami operator $\Delta_{\Gamma(t)}$ has to be pulled back.

For curves, a natural way to do so is the DeTurck trick, closely related to
the equidistribution condition \cite{ElliottFritz2017}: for
$\boldsymbol X(\cdot,t):[0,1]\to\mathbb R^2$ satisfying
$\partial_\rho|\partial_\rho\boldsymbol X|=0$, one has
$\Delta_{\Gamma(t)}\boldsymbol X
=|\Gamma(t)|^{-2}\partial_{\rho\rho}\boldsymbol X$.
We express this structure locally through the pullback flow map
$\boldsymbol X_{s,t}:\Gamma(s)\to\Gamma(t)$. Equidistribution is then
equivalent to $\partial_\sigma|\partial_\sigma\boldsymbol X_{s,t}|=0$, so
the local Jacobian $J:=|\partial_\sigma\boldsymbol X_{s,t}|$ is spatially
constant. Consequently, the BGN system can be written on the fixed reference
curve $\Gamma(s)$ as
\begin{equation*}
\boldsymbol n\cdot\partial_t\boldsymbol X_{s,t}=V,
\qquad
J^2\mathcal H\boldsymbol n
=
-\Delta_{\Gamma(s)}\boldsymbol X_{s,t},
\qquad
\text{on }\Gamma(s)\times[s,T],
\end{equation*}
with $V$, $\mathcal H$, and $\boldsymbol n$ understood as their pullbacks to
$\Gamma(s)$; see \eqref{eq:sd-conformal-local}. This fixed-domain
formulation makes high-order time discretization directly applicable.

The simple pullback structure of curve equidistribution motivates us to
introduce a class of \emph{admissible local pullbacks}, namely local flow
maps equipped with parametrization conditions that yield similarly
tractable pullback identities in higher dimensions. We classify these
admissible local pullbacks in
\cref{thm:parametrization-classification}. For curves, equidistribution is
the only admissible condition, while for general surface evolutions the
relevant admissible mechanisms are conformality and equidistribution.
Equidistribution for curves and conformality for surfaces yield a common
scalar pullback identity for the Laplace--Beltrami operator, leading to the
unified \emph{conformal formulation} for $d\in\{2,3\}$. Equidistribution
in both dimensions instead yields a common divergence-form pullback
identity, leading to the unified \emph{equidistribution formulation}.

We then combine both local pullback formulations with arbitrary-degree
isoparametric finite elements
\cite{GarckeNuernbergPraetoriusZhang2025} and Runge--Kutta collocation
methods to obtain arbitrarily  high-order structure-preserving PFEMs. For the conformal
formulation, suitable averages of the normal--Jacobian vector yield exact
preservation of the enclosed area or volume, while algebraic stability
additionally guarantees energy dissipation; see
\cref{thm:direct-rk-structure}. For the
equidistribution formulation, a path-averaged normal--Jacobian vector
together with a geometric discrete gradient yields both properties without
requiring algebraic stability; see \cref{thm:equi-rk-structure}. The schemes are also extended to VP-MCF, while the underlying framework can potentially be adapted to a broader class of geometric PDEs; see, e.g., \cite{BarrettGarckeNuernberg2007,BarrettGarckeNuernberg2020,
BaoLi2024Anisotropic,BaoLiWang2025Willmore}. 

The rest of the paper is organized as follows. \Cref{sec:parametrization-framework} develops the admissible local pullback framework and its classification. \Cref{sec:surface-diffusion} derives the conformal and equidistribution local pullback formulations for SD. \Cref{sec:direct-rk} constructs their structure-preserving Runge--Kutta collocation discretizations. \Cref{sec:vpmcf-extension} extends the methods to VP-MCF. Numerical experiments are presented in \cref{sec:numerical-tests}, followed by conclusions in \cref{sec:conclusions}.
\section{Local pullback geometry}
\label{sec:parametrization-framework}

This section introduces the local pullback framework under admissible parametrization conditions.

\subsection{Geometric setting}
\label{sec:basic-notation}

Let $\mathcal S\subset\mathbb R^d$ be a smooth, connected, closed hypersurface with outward unit normal $\boldsymbol n_{\mathcal S}$. We use $\nabla_{\mathcal S}$ and $\Delta_{\mathcal S}$ for the surface gradient and Laplace--Beltrami operator, respectively; both act componentwise on vector fields, see \cite{BarrettGarckeNuernberg2020} for details. For $d=2$, with arc length $\sigma$ and unit tangent $\boldsymbol\tau_{\mathcal S}$, we have $\nabla_{\mathcal S}u=(\partial_\sigma u)\boldsymbol\tau_{\mathcal S}$ and $\Delta_{\mathcal S}u=\partial_{\sigma\sigma}u$.

We use the standard Sobolev spaces $L^2(\mathcal S)$ and $H^1(\mathcal S)$, together with the inner products
\begin{equation}\label{eq:reference-l2-product} 
(u,v)_{\mathcal S}:=\int_{\mathcal S}uv\,\mathrm d\mathcal S, 
\qquad 
\langle\boldsymbol A,\boldsymbol B\rangle_{\mathcal S}:=\int_{\mathcal S}\boldsymbol A:\boldsymbol B\,\mathrm d\mathcal S. 
\end{equation} 
For vector-valued functions, $uv$ is replaced by $\boldsymbol u\cdot\boldsymbol v$, while for matrices $\boldsymbol A:\boldsymbol B=\operatorname{tr}(\boldsymbol A^T\boldsymbol B)$. The corresponding $L^2$ norms for scalar, vector, and matrix-valued functions are all denoted by $\|\cdot\|_{L^2(\mathcal S)}$. 

For $d\in \{2,3\}$, let $\{\Gamma(t)\}_{t\in[0,T]}$ be a smooth family of connected, closed hypersurfaces in $\mathbb R^d$, with outward unit normal $\boldsymbol n_{\Gamma(t)}$. We parameterize $\Gamma(t)$ by smooth, orientation-preserving diffeomorphisms
\begin{equation}\label{eq:global-parametrization}
\boldsymbol X_t:=\boldsymbol X(\cdot,t):\Gamma(0)\to\Gamma(t),
\qquad
\boldsymbol X_0=\operatorname{id}_{\Gamma(0)},
\end{equation}
and denote the mean curvature by $\mathcal H_{\Gamma(t)}$. When no ambiguity arises, we write $\boldsymbol n:=\boldsymbol n_{\Gamma(t)}$ and $\mathcal H:=\mathcal H_{\Gamma(t)}$.\subsection{Local pullback}
\label{sec:abstract-evolution-family}

For $0\leq s\leq t\leq T$, define the local pullback by the flow map
\begin{equation}
\boldsymbol X_{s,t}:=\boldsymbol X_t\circ\boldsymbol X_s^{-1}:
\Gamma(s)\to\Gamma(t),
\qquad
\boldsymbol X_{s,s}=\operatorname{id}_{\Gamma(s)}.
\label{eq:surface-transition-map}
\end{equation}
Let $\boldsymbol P_{\Gamma(s)}:=\boldsymbol I-\boldsymbol n_{\Gamma(s)}\otimes\boldsymbol n_{\Gamma(s)}$ denote the tangential projection. We define the local pullback metric by
\begin{equation}
\boldsymbol G_{s,t}:=(\nabla_{\Gamma(s)}\boldsymbol X_{s,t})^T\nabla_{\Gamma(s)}\boldsymbol X_{s,t}+\boldsymbol n_{\Gamma(s)}\otimes\boldsymbol n_{\Gamma(s)}.
\label{eq:relative-metric}
\end{equation}
On $T_p\Gamma(s)$, $\boldsymbol G_{s,t}$ is the tangential right Cauchy--Green deformation tensor associated with the local pullback $\boldsymbol X_{s,t}$ \cite{hughes1983mathematical}, while $\boldsymbol G_{s,t}\boldsymbol n_{\Gamma(s)}=\boldsymbol n_{\Gamma(s)}$. Hence $\boldsymbol G_{s,t}$ is symmetric positive definite and maps tangent vectors of $\Gamma(s)$ to tangent vectors. The associated local surface Jacobian, i.e., the local ratio of surface measures under $\boldsymbol X_{s,t}$, is
\begin{equation}
J_{s,t}
=\sqrt{\det\boldsymbol G_{s,t}}.
\label{eq:metric-determinant-jacobian}
\end{equation}
We decompose the pullback metric into size and shape,
\begin{equation}
\begin{aligned}
&\boldsymbol G_{s,t}\boldsymbol P_{\Gamma(s)}
=J_{s,t}^{2/(d-1)}\widehat{\boldsymbol G}_{s,t}\boldsymbol P_{\Gamma(s)}, \\
&\widehat{\boldsymbol G}_{s,t}:=J_{s,t}^{-2/(d-1)}(\nabla_{\Gamma(s)}\boldsymbol X_{s,t})^T\nabla_{\Gamma(s)}\boldsymbol X_{s,t}+\boldsymbol n_{\Gamma(s)}\otimes\boldsymbol n_{\Gamma(s)},
\end{aligned}
\label{eq:relative-metric-scale-shape-decomposition}
\end{equation}
so that $\det\widehat{\boldsymbol G}_{s,t}=1$. Here $J_{s,t}^{2/(d-1)}$ describes the local change of measure and $\widehat{\boldsymbol G}_{s,t}$ the anisotropic shape distortion.
For $d=2$, the tangent space is one-dimensional, and $\widehat{\boldsymbol G}_{s,t}=\boldsymbol I$.

\begin{proposition}[Weighted pullback of the Laplace--Beltrami operator]
\label{prop:weighted-laplace-pullback}
For every $u\in C^2(\Gamma(t))$ and $\widehat u:=u\circ\boldsymbol X_{s,t}$,
\begin{equation}
J_{s,t}(\Delta_{\Gamma(t)}u)\circ\boldsymbol X_{s,t}
=\nabla_{\Gamma(s)}\cdot\bigl(J_{s,t}\boldsymbol G_{s,t}^{-1}\nabla_{\Gamma(s)}\widehat u\bigr).
\label{eq:weighted-laplace-pullback}
\end{equation}
\end{proposition}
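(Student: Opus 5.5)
The plan is a weak (distributional) argument: integrate against test functions, transport the Dirichlet form from $\Gamma(t)$ to $\Gamma(s)$ by the change of variables $\boldsymbol X_{s,t}$, and read off the strong identity. Fix an arbitrary test function $\widehat\varphi\in C^1(\Gamma(s))$ and set $\varphi:=\widehat\varphi\circ\boldsymbol X_{s,t}^{-1}\in C^1(\Gamma(t))$, which is legitimate since $\boldsymbol X_{s,t}$ is a smooth diffeomorphism. Because $\Gamma(t)$ is closed, Green's formula on $\Gamma(t)$ gives
\begin{equation*}
\int_{\Gamma(t)}(\Delta_{\Gamma(t)}u)\varphi\,\mathrm d\Gamma(t)=-\int_{\Gamma(t)}\nabla_{\Gamma(t)}u\cdot\nabla_{\Gamma(t)}\varphi\,\mathrm d\Gamma(t).
\end{equation*}
The change of variables with surface Jacobian $J_{s,t}$, as defined in \eqref{eq:metric-determinant-jacobian}, turns the left side into $\int_{\Gamma(s)}J_{s,t}(\Delta_{\Gamma(t)}u)\circ\boldsymbol X_{s,t}\,\widehat\varphi\,\mathrm d\Gamma(s)$.

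The key step is the pullback of the gradient inner product. Write $\boldsymbol F:=\nabla_{\Gamma(s)}\boldsymbol X_{s,t}$, with the paper's convention that its rows are the surface gradients of the components. The chain rule gives $\nabla_{\Gamma(s)}\widehat u=\boldsymbol F^T(\nabla_{\Gamma(t)}u)\circ\boldsymbol X_{s,t}$, and similarly for $\widehat\varphi$. Here $\boldsymbol F$ annihilates $\boldsymbol n_{\Gamma(s)}$ and maps $T_p\Gamma(s)$ isomorphically onto $T_{\boldsymbol X_{s,t}(p)}\Gamma(t)$. I then claim that for tangent vectors $\boldsymbol a,\boldsymbol b$ of $\Gamma(t)$,
\begin{equation*}
\boldsymbol a\cdot\boldsymbol b=(\boldsymbol F^T\boldsymbol a)\cdot\boldsymbol G_{s,t}^{-1}(\boldsymbol F^T\boldsymbol b).
\end{equation*}
To verify this, observe that $\boldsymbol F^T$ restricted to $T\Gamma(t)$ is an isomorphism onto $T\Gamma(s)$. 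On $T\Gamma(s)$, $\boldsymbol G_{s,t}=\boldsymbol F^T\boldsymbol F$, so the claim reduces to $\boldsymbol F(\boldsymbol F^T\boldsymbol F)^{-1}\boldsymbol F^T=\boldsymbol P_{\Gamma(t)}$ on the relevant subspaces. This is the orthogonal projection onto the range of $\boldsymbol F$, which is $T\Gamma(t)$.

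Handling this restricted inverse on tangent spaces, including the normal block $\boldsymbol n\otimes\boldsymbol n$ that makes $\boldsymbol G_{s,t}$ invertible, is the main technical obstacle. I would settle it either as above or by working in a local chart, where the formula becomes the standard $g^{ij}$ expression. The needed facts are that $\boldsymbol G_{s,t}^{-1}$ maps tangent vectors to tangent vectors and that its action on $T\Gamma(s)$ is the inverse of $\boldsymbol F^T\boldsymbol F$ there.

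With the claim in hand, a second change of variables gives
\begin{equation*}
-\int_{\Gamma(t)}\nabla_{\Gamma(t)}u\cdot\nabla_{\Gamma(t)}\varphi\,\mathrm d\Gamma(t)=-\int_{\Gamma(s)}J_{s,t}\boldsymbol G_{s,t}^{-1}\nabla_{\Gamma(s)}\widehat u\cdot\nabla_{\Gamma(s)}\widehat\varphi\,\mathrm d\Gamma(s).
\end{equation*}
The vector field $J_{s,t}\boldsymbol G_{s,t}^{-1}\nabla_{\Gamma(s)}\widehat u$ is tangential, so integrating by parts on the closed surface $\Gamma(s)$ produces no curvature term. The right side therefore equals $\int_{\Gamma(s)}\nabla_{\Gamma(s)}\cdot(J_{s,t}\boldsymbol G_{s,t}^{-1}\nabla_{\Gamma(s)}\widehat u)\,\widehat\varphi\,\mathrm d\Gamma(s)$. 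Both integrands are continuous, and this holds for every $\widehat\varphi$, so the fundamental lemma gives \eqref{eq:weighted-laplace-pullback} pointwise. The $C^2$ regularity of $u$ and the smoothness of $\boldsymbol X_{s,t}$ ensure the divergence exists classically.
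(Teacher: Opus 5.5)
Your proof is correct, but it reaches the identity by a different route from the paper's. The paper argues pointwise. It invokes the Piola identity $J_{s,t}(\nabla_{\Gamma(t)}\cdot\boldsymbol w)\circ\boldsymbol X_{s,t}=\nabla_{\Gamma(s)}\cdot\bigl(J_{s,t}\boldsymbol F^{-1}(\boldsymbol w\circ\boldsymbol X_{s,t})\bigr)$ for tangent fields $\boldsymbol w$ on $\Gamma(t)$, with your $\boldsymbol F=\nabla_{\Gamma(s)}\boldsymbol X_{s,t}$ and $\boldsymbol F^{-1}$ the inverse of its restriction $T_p\Gamma(s)\to T_{\boldsymbol X_{s,t}(p)}\Gamma(t)$. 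It applies this to $\boldsymbol w=\nabla_{\Gamma(t)}u$ and identifies the pulled-back field via $\boldsymbol F\boldsymbol G_{s,t}^{-1}\nabla_{\Gamma(s)}\widehat u=(\nabla_{\Gamma(t)}u)\circ\boldsymbol X_{s,t}$.

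That last relation is exactly your claim $\boldsymbol F\boldsymbol G_{s,t}^{-1}\boldsymbol F^T=\boldsymbol P_{\Gamma(t)}$ on tangent vectors, so the linear algebra is shared. What differs is how the divergence is transported. You never use the Piola identity. Instead you pull back the Dirichlet form by the change of variables, integrate by parts on both closed surfaces, and conclude with the fundamental lemma, which in effect reproves the Piola identity for gradient fields by duality.

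Your route is self-contained: it needs only Green's formula on closed hypersurfaces, the area formula with Jacobian $J_{s,t}$, and the observation that $J_{s,t}\boldsymbol G_{s,t}^{-1}\nabla_{\Gamma(s)}\widehat u$ is a tangential $C^1$ field. It also exhibits directly the weighted Dirichlet form $(J_{s,t}\boldsymbol G_{s,t}^{-1}\nabla_{\Gamma(s)}\widehat u,\nabla_{\Gamma(s)}\widehat\varphi)_{\Gamma(s)}$ on which the paper's later weak formulations are built.

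The paper's route is shorter and purely local. It does not use closedness of the surfaces, which your version needs unless you switch to compactly supported test functions. It also holds for arbitrary tangent fields rather than only gradients, at the cost of citing the surface Piola identity instead of proving it.
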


This is the Piola identity \cite{hughes1983mathematical} applied to the vector field $\nabla_{\Gamma(t)}u$, since the chain rule gives $\nabla_{\Gamma(s)}\widehat u=(\nabla_{\Gamma(s)}\boldsymbol X_{s,t})^T(\nabla_{\Gamma(t)}u)\circ\boldsymbol X_{s,t}$, and hence $(\nabla_{\Gamma(s)}\boldsymbol X_{s,t})\boldsymbol G_{s,t}^{-1}\nabla_{\Gamma(s)}\widehat u=(\nabla_{\Gamma(t)}u)\circ\boldsymbol X_{s,t}$.

Thus the pullback is governed by the coefficient $J_{s,t}\boldsymbol G_{s,t}^{-1}$ on tangent vectors. If $J_{s,t}\boldsymbol G_{s,t}^{-1}\boldsymbol P_{\Gamma(s)}=c\,\boldsymbol P_{\Gamma(s)}$ with a constant $c>0$, then \eqref{eq:weighted-laplace-pullback} and $\Delta_{\Gamma(s)}=\nabla_{\Gamma(s)}\cdot\nabla_{\Gamma(s)}$ give
\begin{equation}
J_{s,t}(\Delta_{\Gamma(t)}u)\circ\boldsymbol X_{s,t}=c\,\Delta_{\Gamma(s)}(u\circ\boldsymbol X_{s,t}).
\label{eq:weighted-laplace-constant}
\end{equation}
Conversely, if \eqref{eq:weighted-laplace-constant} holds for all $u\in C^2(\Gamma(t))$, comparing the second-order terms gives $J_{s,t}\boldsymbol G_{s,t}^{-1}\boldsymbol P_{\Gamma(s)}=c\,\boldsymbol P_{\Gamma(s)}$, since $\boldsymbol G_{s,t}^{-1}$ is symmetric. By \eqref{eq:relative-metric-scale-shape-decomposition}, we have
\begin{equation}
J_{s,t}\boldsymbol G_{s,t}^{-1}\boldsymbol P_{\Gamma(s)}=J_{s,t}^{(d-3)/(d-1)}\,\widehat{\boldsymbol G}_{s,t}^{-1}\boldsymbol P_{\Gamma(s)}.
\label{eq:pullback-coefficient}
\end{equation}
Hence a simple pullback requires control of the size $J_{s,t}$, of the shape $\widehat{\boldsymbol G}_{s,t}$, or of both. This is achieved by the admissible conditions below.

\subsection{Admissible conditions}
\label{sec:admissible-conditions}

We use the tangential freedom to impose an admissible condition on the flow maps, generalizing equidistribution for curves.

For $p\in\Gamma(s)$, let $[\nabla_{\Gamma(s)}\boldsymbol X_{s,t}(p)]\in\mathrm{GL}^+(d-1,\mathbb R)$ denote the matrix of the restriction of $\nabla_{\Gamma(s)}\boldsymbol X_{s,t}(p)$ to $T_p\Gamma(s)\to T_{\boldsymbol X_{s,t}(p)}\Gamma(t)$ with respect to positively oriented orthonormal bases of $T_p\Gamma(s)$ and $T_{\boldsymbol X_{s,t}(p)}\Gamma(t)$. Then $[\nabla_{\Gamma(s)}\boldsymbol X_{s,t}(p)]^T[\nabla_{\Gamma(s)}\boldsymbol X_{s,t}(p)]$ is the matrix of the restriction of $\boldsymbol G_{s,t}(p)$ to $T_p\Gamma(s)$ in the chosen orthonormal basis. In particular, $\det[\nabla_{\Gamma(s)}\boldsymbol X_{s,t}(p)]=J_{s,t}(p)$.

For $d = 2$, the tangent spaces are spanned by the unit tangents, and $[\nabla_{\Gamma(s)}\boldsymbol X_{s,t}(p)]$ is the scalar $|\partial_\sigma\boldsymbol X_{s,t}(p)|=J_{s,t}(p)$. The equidistribution condition $\partial_\sigma|\partial_\sigma\boldsymbol X_{s,t}|=0$ therefore states that
\begin{equation}
[\nabla_{\Gamma(s)}\boldsymbol X_{s,t}(p)]=\lambda_{s,t}\qquad\text{for all }p\in\Gamma(s),
\label{eq:curve-equi-tangent}
\end{equation}
with a constant $\lambda_{s,t}>0$ independent of $p$, which equals the length ratio $|\Gamma(t)|/|\Gamma(s)|$. This motivates the following definition.

\begin{definition}[Admissible condition]
\label{def:first-order-condition}
Let $\mathcal G$ be a closed connected subgroup of $\mathrm{GL}^+(d-1,\mathbb R)$ with
\begin{equation}
\mathrm{SO}(d-1)\subset\mathcal G\subsetneq\mathrm{GL}^+(d-1,\mathbb R).
\label{eq:group-condition}
\end{equation}
The flow maps are called $\mathcal G$-admissible if, for every $0\leq s\leq t\leq T$, there is a constant $\lambda_{s,t}>0$ independent of $p$ such that $\lambda_{r,s}\lambda_{s,t}=\lambda_{r,t}$ for $0\leq r\leq s\leq t\leq T$, and
\begin{equation}
[\nabla_{\Gamma(s)}\boldsymbol X_{s,t}(p)]\in\lambda_{s,t}\mathcal G\qquad\text{for all }p\in\Gamma(s).
\label{eq:first-order-condition}
\end{equation}
\end{definition}

Equidistribution for curves is the case $\mathcal G=\{1\}=\mathrm{SO}(1)$. Since a change of orthonormal bases multiplies $[\nabla_{\Gamma(s)}\boldsymbol X_{s,t}(p)]$ by rotations from both sides, $\mathrm{SO}(d-1)\subset\mathcal G$ makes the condition independent of coordinates. Finally, $\mathcal G\neq\mathrm{GL}^+(d-1,\mathbb R)$ excludes the trivial case.

The admissible condition is classified by the following theorem.

\begin{theorem}[Classification of $\mathcal G$]
\label{thm:parametrization-classification}
Let $\mathcal G$ satisfy the assumptions of \cref{def:first-order-condition}.

(a) If $d=2$, then $\mathcal G=\{1\}$, and the admissible condition is equidistribution,
\begin{equation}
\nabla_{\Gamma(s)}J_{s,t}=0.
\label{eq:equidistribution}
\end{equation}

(b) If $d=3$, then $\mathcal G$ is one of $\mathrm{SO}(2)$, $\mathbb R^+\times\mathrm{SO}(2):=\{\mu\boldsymbol R:\mu>0,\ \boldsymbol R\in\mathrm{SO}(2)\}$, and $\mathrm{SL}(2,\mathbb R)$. The corresponding admissible conditions are (i) similarity, i.e. $\boldsymbol G_{s,t}=\lambda_{s,t}^2\boldsymbol P_{\Gamma(s)}+\boldsymbol n_{\Gamma(s)}\otimes\boldsymbol n_{\Gamma(s)}$; (ii) conformality, i.e.
\begin{equation}
\boldsymbol G_{s,t}=J_{s,t}\boldsymbol P_{\Gamma(s)}+\boldsymbol n_{\Gamma(s)}\otimes\boldsymbol n_{\Gamma(s)}
\qquad \Longleftrightarrow \qquad
\widehat{\boldsymbol G}_{s,t}=\boldsymbol I;
\label{eq:conformal-coordinate-evolution}
\end{equation}
and (iii) equidistribution \eqref{eq:equidistribution}, respectively.
\end{theorem}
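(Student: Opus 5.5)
The proof splits into a group-theoretic classification of closed connected subgroups $\mathcal G$ with $\mathrm{SO}(d-1)\subset\mathcal G\subsetneq\mathrm{GL}^+(d-1,\mathbb R)$, followed by a translation of each admissible group into a condition on $\boldsymbol G_{s,t}$ via the orthonormal-basis description $[\nabla_{\Gamma(s)}\boldsymbol X_{s,t}]^T[\nabla_{\Gamma(s)}\boldsymbol X_{s,t}]$. For (a), $\mathrm{GL}^+(1,\mathbb R)=\mathbb R^+$ is a connected one-dimensional Lie group. Its only closed connected subgroups are $\{1\}$ and $\mathbb R^+$, and the proper one is forced to be $\{1\}=\mathrm{SO}(1)$. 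The condition $[\nabla_{\Gamma(s)}\boldsymbol X_{s,t}(p)]=\lambda_{s,t}$ then says exactly that $J_{s,t}(p)=\lambda_{s,t}$ is constant, i.e. \eqref{eq:equidistribution}. Conversely, if $J_{s,t}$ is spatially constant, integrating over $\Gamma(s)$ gives $\lambda_{s,t}=|\Gamma(t)|/|\Gamma(s)|$, and the cocycle property $\lambda_{r,s}\lambda_{s,t}=\lambda_{r,t}$ is automatic as a telescoping ratio of lengths.

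For (b), I would pass to Lie algebras. A closed connected subgroup is determined by its Lie algebra $\mathfrak g\subset\mathfrak{gl}(2,\mathbb R)$, and $\mathfrak g$ must contain $\mathfrak{so}(2)$ and be $\mathrm{Ad}(\mathrm{SO}(2))$-invariant. Decompose $\mathfrak{gl}(2,\mathbb R)=\mathbb R\boldsymbol I\oplus\mathfrak{so}(2)\oplus\mathfrak p_0$, where $\mathfrak p_0$ is the space of symmetric traceless matrices. Conjugation by a rotation $R_\theta$ acts trivially on the first two summands and acts on $\mathfrak p_0\cong\mathbb R^2$ by rotation through $2\theta$, which is irreducible. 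Hence $\mathfrak g\cap\mathfrak p_0$ is either $0$ or all of $\mathfrak p_0$.

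The main obstacle is that $\mathfrak g$ need not split along this decomposition, so the projections have to be handled. I would argue as follows. Write $\mathfrak g\supset\mathfrak{so}(2)$ and consider its image in $\mathbb R\boldsymbol I\oplus\mathfrak p_0$. This image is $\mathrm{Ad}$-invariant, and because the two summands carry inequivalent representations (trivial versus rotation by $2\theta$), it is a direct sum of invariant subspaces of each. This leaves four options:
\begin{itemize}
\item $\mathfrak{so}(2)$, giving $\mathrm{SO}(2)$;
\item $\mathbb R\boldsymbol I\oplus\mathfrak{so}(2)$, giving $\mathbb R^+\times\mathrm{SO}(2)$;
\item $\mathfrak{so}(2)\oplus\mathfrak p_0=\mathfrak{sl}(2,\mathbb R)$, giving $\mathrm{SL}(2,\mathbb R)$;
\item everything, which is excluded.
\end{itemize}
In each case the subgroup generated is closed, and it is the full connected group with that Lie algebra.

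Finally, I would translate each case. With $A:=[\nabla_{\Gamma(s)}\boldsymbol X_{s,t}]$:
\begin{itemize}
\item $A\in\lambda\mathrm{SO}(2)$ iff $A^TA=\lambda^2\boldsymbol I$. This gives the similarity condition $\boldsymbol G_{s,t}=\lambda_{s,t}^2\boldsymbol P_{\Gamma(s)}+\boldsymbol n_{\Gamma(s)}\otimes\boldsymbol n_{\Gamma(s)}$.
\item $A\in\mathbb R^+\mathrm{SO}(2)$ (the factor $\lambda_{s,t}$ is absorbed) iff $A^TA=(\det A)\boldsymbol I=J_{s,t}\boldsymbol I$. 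This is \eqref{eq:conformal-coordinate-evolution}; since $d=3$ gives $J^{2/(d-1)}=J$, it is equivalent to $\widehat{\boldsymbol G}_{s,t}=\boldsymbol I$.
\item $A\in\lambda\mathrm{SL}(2,\mathbb R)$ iff $\det A=J_{s,t}=\lambda^2$ is constant in $p$, i.e. \eqref{eq:equidistribution}.
\end{itemize}
In the equidistribution case the cocycle condition again follows from $\lambda_{s,t}^2=|\Gamma(t)|/|\Gamma(s)|$. In the similarity case it follows from the chain rule $\boldsymbol X_{r,t}=\boldsymbol X_{s,t}\circ\boldsymbol X_{r,s}$, which multiplies the scale factors.
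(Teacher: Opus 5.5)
Your proposal is correct and follows the paper's route. You reduce, via the Lie correspondence, to subalgebras of $\mathfrak{gl}(2,\mathbb R)$ containing $\mathfrak{so}(2)$, and then read off similarity, conformality, or equidistribution from $A^TA$ or $\det A$. There are two differences from the paper. First, you derive the list of four subalgebras yourself, using the $\mathrm{Ad}(\mathrm{SO}(2))$-isotypic splitting $\mathbb R\boldsymbol I\oplus\mathfrak{so}(2)\oplus\mathfrak p_0$, whereas the paper simply cites Hall. The ``obstacle'' you flag is not really one: since $\mathfrak{so}(2)\subset\mathfrak g$, one has $\mathfrak g=\mathfrak{so}(2)\oplus\pi(\mathfrak g)$ at once. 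Second, you also verify the cocycle condition $\lambda_{r,s}\lambda_{s,t}=\lambda_{r,t}$ for the converse direction, which the paper leaves implicit.
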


\begin{proof}
\emph{(a)} For $d=2$, $\mathrm{GL}^+(1,\mathbb R)=\mathbb R^+$, whose connected subgroups are $\{1\}$ and $\mathbb R^+$. By \eqref{eq:group-condition}, $\mathcal G=\{1\}$. The matrix $[\nabla_{\Gamma(s)}\boldsymbol X_{s,t}(p)]$ is the scalar $|\partial_\sigma\boldsymbol X_{s,t}|=J_{s,t}$, so \eqref{eq:first-order-condition} reads $J_{s,t}=\lambda_{s,t}$, i.e., \eqref{eq:equidistribution}, since $\lambda_{s,t}$ is independent of $p$.

\emph{(b)} Let $d=3$. Since a closed connected subgroup of $\mathrm{GL}^+(2,\mathbb R)$ is determined by its Lie algebra \cite{Hall2015}, it suffices to determine the Lie subalgebras of $\mathfrak{gl}(2,\mathbb R)$ containing $\mathfrak{so}(2)$. These are $\mathfrak{so}(2)$, $\mathfrak{so}(2)\oplus\mathbb R\boldsymbol I_2$, $\mathfrak{sl}(2,\mathbb R)$, and $\mathfrak{gl}(2,\mathbb R)$ \cite{Hall2015}, which are the Lie algebras of $\mathrm{SO}(2)$, $\mathbb R^+\times\mathrm{SO}(2)$, $\mathrm{SL}(2,\mathbb R)$, and $\mathrm{GL}^+(2,\mathbb R)$. The last one is excluded by \eqref{eq:group-condition}.

It remains to identify the conditions. Write $\boldsymbol A:=[\nabla_{\Gamma(s)}\boldsymbol X_{s,t}(p)]$ for short. We know $\det\boldsymbol A=J_{s,t}(p)$, and $\boldsymbol A^T\boldsymbol A$ is the restriction of the pullback metric tensor $\boldsymbol G_{s,t}(p)$ to $T_p\Gamma(s)$ in the chosen orthonormal basis. If $\mathcal G=\mathrm{SO}(2)$, then $\boldsymbol A^T\boldsymbol A=\lambda_{s,t}^2\boldsymbol I_2$ with a constant $\lambda_{s,t}$, i.e., $\boldsymbol G_{s,t}=\lambda_{s,t}^2\boldsymbol P_{\Gamma(s)}+\boldsymbol n_{\Gamma(s)}\otimes\boldsymbol n_{\Gamma(s)}$. If $\mathcal G=\mathbb R^+\times\mathrm{SO}(2)$, then $\boldsymbol A^T\boldsymbol A=\det(\boldsymbol A)\boldsymbol I_2=J_{s,t}\boldsymbol I_2$, which is \eqref{eq:conformal-coordinate-evolution}. If $\mathcal G=\mathrm{SL}(2,\mathbb R)$, then $\det\boldsymbol A=\lambda_{s,t}^2$, i.e., $J_{s,t}$ is spatially constant, which is \eqref{eq:equidistribution}.
\end{proof}

Similarity forces the Gaussian curvatures $K$ to satisfy
$K_{\Gamma(t)}\circ\boldsymbol X_{s,t}=\lambda_{s,t}^{-2}K_{\Gamma(s)}$,
which fails for general evolutions. Thus the attainable nontrivial
conditions are equidistribution for curves and conformality or
equidistribution for surfaces.

\begin{figure}[htbp]
\centering
\includegraphics[width=0.85\linewidth]{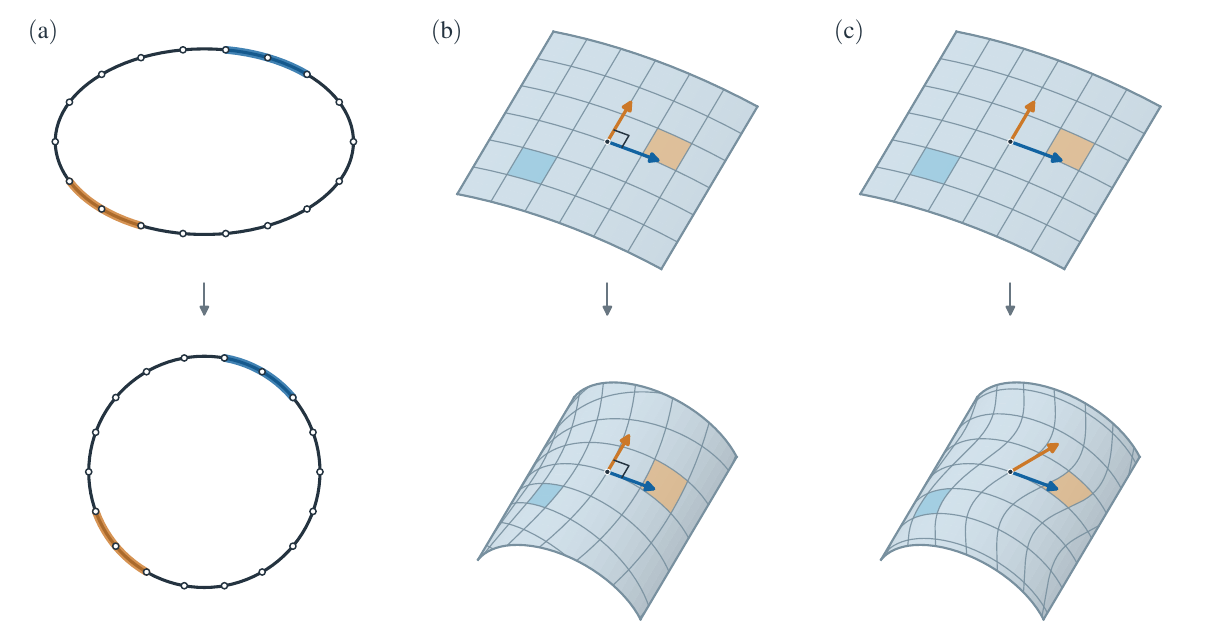}
\caption{Schematic illustration of three admissible conditions (the upper and lower rows show the initial and final configurations): (a) on an equidistributed planar curve, equal-arclength segments remain equal in length, (b) a conformal surface parametrization preserves tangent angles while allowing spatially varying area scaling, and  (c) an equidistributed surface parametrization preserves equality of initially equal-area material cells while allowing angular distortion.}
\end{figure}

\subsection{Admissible pullback of the Laplace--Beltrami operator}
\label{sec:pullback-identities}

We now evaluate \eqref{eq:weighted-laplace-pullback} under the admissible conditions. For equidistribution, we use the coefficient
\begin{equation}
\boldsymbol C_{s,t}:=J_{s,t}^2\boldsymbol G_{s,t}^{-1},
\label{eq:equi-C}
\end{equation}
which is symmetric positive definite. By \eqref{eq:pullback-coefficient}, its tangential part satisfies $\boldsymbol C_{s,t}\boldsymbol P_{\Gamma(s)}=J_{s,t}^{2(d-2)/(d-1)}\widehat{\boldsymbol G}_{s,t}^{-1}\boldsymbol P_{\Gamma(s)}$; for $d=2$, this becomes $\boldsymbol C_{s,t}\boldsymbol P_{\Gamma(s)}=\boldsymbol P_{\Gamma(s)}$.

\begin{corollary}[Equidistributed and conformal curves]
\label{cor:equi-planar-pullback}
Let $d=2$ and let $\boldsymbol X_{s,t}$ be equidistributed. Then, for every $u\in C^2(\Gamma(t))$, the conformal formulation
\begin{equation}
J_{s,t}^2(\Delta_{\Gamma(t)}u)\circ\boldsymbol X_{s,t}=\Delta_{\Gamma(s)}(u\circ\boldsymbol X_{s,t})
\label{eq:planar-equi-laplace}
\end{equation}
and the equidistribution formulation
\begin{equation}
J_{s,t}^2(\Delta_{\Gamma(t)}u)\circ\boldsymbol X_{s,t}=\nabla_{\Gamma(s)}\cdot\bigl(\boldsymbol C_{s,t}\nabla_{\Gamma(s)}\widehat u\bigr).
\label{eq:planar-equi-laplace-divergence}
\end{equation}
hold.
\end{corollary}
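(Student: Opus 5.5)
The plan is to derive both identities from \cref{prop:weighted-laplace-pullback}, multiplied through by $J_{s,t}$, using the fact that for $d=2$ the tangential metric is the scalar $J_{s,t}^2$.

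First I would record the tangential structure for $d=2$. By \eqref{eq:relative-metric} and \eqref{eq:metric-determinant-jacobian}, the tangent space is spanned by $\boldsymbol\tau_{\Gamma(s)}$. We have $\boldsymbol G_{s,t}\boldsymbol\tau_{\Gamma(s)}=|\partial_\sigma\boldsymbol X_{s,t}|^2\boldsymbol\tau_{\Gamma(s)}=J_{s,t}^2\boldsymbol\tau_{\Gamma(s)}$ and $\boldsymbol G_{s,t}\boldsymbol n_{\Gamma(s)}=\boldsymbol n_{\Gamma(s)}$. Hence $\boldsymbol G_{s,t}^{-1}\boldsymbol P_{\Gamma(s)}=J_{s,t}^{-2}\boldsymbol P_{\Gamma(s)}$. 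Consequently $J_{s,t}\boldsymbol G_{s,t}^{-1}\boldsymbol P_{\Gamma(s)}=J_{s,t}^{-1}\boldsymbol P_{\Gamma(s)}$, consistent with \eqref{eq:pullback-coefficient} at $d=2$, and $\boldsymbol C_{s,t}\boldsymbol P_{\Gamma(s)}=\boldsymbol P_{\Gamma(s)}$ as already noted after \eqref{eq:equi-C}.

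For \eqref{eq:planar-equi-laplace-divergence}, I would multiply \eqref{eq:weighted-laplace-pullback} by $J_{s,t}$. By equidistribution \eqref{eq:equidistribution}, $J_{s,t}$ is spatially constant and may be moved inside the surface divergence. This gives
\begin{equation*}
J_{s,t}^2(\Delta_{\Gamma(t)}u)\circ\boldsymbol X_{s,t}=\nabla_{\Gamma(s)}\cdot\bigl(J_{s,t}^2\boldsymbol G_{s,t}^{-1}\nabla_{\Gamma(s)}\widehat u\bigr),
\end{equation*}
which is \eqref{eq:planar-equi-laplace-divergence} by the definition \eqref{eq:equi-C}.

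For \eqref{eq:planar-equi-laplace}, I would use that $\nabla_{\Gamma(s)}\widehat u$ is tangential, so $\boldsymbol C_{s,t}\nabla_{\Gamma(s)}\widehat u=\boldsymbol C_{s,t}\boldsymbol P_{\Gamma(s)}\nabla_{\Gamma(s)}\widehat u=\nabla_{\Gamma(s)}\widehat u$. The right-hand side therefore reduces to $\nabla_{\Gamma(s)}\cdot\nabla_{\Gamma(s)}\widehat u=\Delta_{\Gamma(s)}\widehat u$. Equivalently, this is \eqref{eq:weighted-laplace-constant} with $c=J_{s,t}^{-1}=\lambda_{s,t}^{-1}$, which is a genuine constant under equidistribution, multiplied by $J_{s,t}$.

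The only delicate point is where the constancy of $J_{s,t}$ enters. In the divergence form it is needed to pull the factor $J_{s,t}$ inside $\nabla_{\Gamma(s)}\cdot$. In the scalar form it is needed so that the coefficient $c$ in \eqref{eq:weighted-laplace-constant} is constant. Everything else is the elementary one-dimensional metric computation.
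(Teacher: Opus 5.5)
Your proposal is correct and follows the paper's proof. Multiplying \eqref{eq:weighted-laplace-pullback} by the spatially constant $J_{s,t}$ gives the divergence form \eqref{eq:planar-equi-laplace-divergence}, and $\boldsymbol C_{s,t}\boldsymbol P_{\Gamma(s)}=\boldsymbol P_{\Gamma(s)}$ together with tangentiality of $\nabla_{\Gamma(s)}\widehat u$ then reduces it to \eqref{eq:planar-equi-laplace}; your explicit one-dimensional computation $\boldsymbol G_{s,t}=J_{s,t}^2\boldsymbol\tau_{\Gamma(s)}\otimes\boldsymbol\tau_{\Gamma(s)}+\boldsymbol n_{\Gamma(s)}\otimes\boldsymbol n_{\Gamma(s)}$ just spells out the step the paper cites from \eqref{eq:equi-C}.
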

 
\begin{proof}
Since $J_{s,t}$ is constant by \eqref{eq:equidistribution}, multiplying \eqref{eq:weighted-laplace-pullback} by $J_{s,t}$ gives $J_{s,t}^2(\Delta_{\Gamma(t)}u)\circ\boldsymbol X_{s,t}=\nabla_{\Gamma(s)}\cdot(\boldsymbol C_{s,t}\nabla_{\Gamma(s)}\widehat u)$, and $\boldsymbol C_{s,t}\boldsymbol P_{\Gamma(s)}=\boldsymbol P_{\Gamma(s)}$ by \eqref{eq:equi-C}.
\end{proof}
 
\begin{corollary}[Conformal surfaces]
\label{cor:conformal-laplace}
Let $d=3$ and let $\boldsymbol X_{s,t}$ be conformal. Then, for every $u\in C^2(\Gamma(t))$,
\begin{equation}
J_{s,t}(\Delta_{\Gamma(t)}u)\circ\boldsymbol X_{s,t}=\Delta_{\Gamma(s)}(u\circ\boldsymbol X_{s,t}).
\label{eq:conformal-laplace}
\end{equation}
\end{corollary}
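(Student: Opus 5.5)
The plan is to read this off from \cref{prop:weighted-laplace-pullback} combined with the criterion \eqref{eq:weighted-laplace-constant}. Concretely, we show that conformality makes the tangential coefficient $J_{s,t}\boldsymbol G_{s,t}^{-1}$ equal to the identity on tangent vectors, so that the divergence on the right of \eqref{eq:weighted-laplace-pullback} collapses to $\Delta_{\Gamma(s)}\widehat u$.

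First, invoke \eqref{eq:pullback-coefficient} with $d=3$. The exponent $(d-3)/(d-1)$ vanishes, giving $J_{s,t}\boldsymbol G_{s,t}^{-1}\boldsymbol P_{\Gamma(s)}=\widehat{\boldsymbol G}_{s,t}^{-1}\boldsymbol P_{\Gamma(s)}$. By the conformal condition \eqref{eq:conformal-coordinate-evolution} from \cref{thm:parametrization-classification}(b), $\widehat{\boldsymbol G}_{s,t}=\boldsymbol I$, and therefore $J_{s,t}\boldsymbol G_{s,t}^{-1}\boldsymbol P_{\Gamma(s)}=\boldsymbol P_{\Gamma(s)}$.

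The same conclusion can be checked directly. From $\boldsymbol G_{s,t}=J_{s,t}\boldsymbol P_{\Gamma(s)}+\boldsymbol n_{\Gamma(s)}\otimes\boldsymbol n_{\Gamma(s)}$ we get
\[
\boldsymbol G_{s,t}^{-1}=J_{s,t}^{-1}\boldsymbol P_{\Gamma(s)}+\boldsymbol n_{\Gamma(s)}\otimes\boldsymbol n_{\Gamma(s)},
\]
since $\boldsymbol P_{\Gamma(s)}$ and $\boldsymbol n_{\Gamma(s)}\otimes\boldsymbol n_{\Gamma(s)}$ are complementary orthogonal projections.

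Next, note that $\nabla_{\Gamma(s)}\widehat u$ is tangential, i.e.\ $\nabla_{\Gamma(s)}\widehat u=\boldsymbol P_{\Gamma(s)}\nabla_{\Gamma(s)}\widehat u$. Hence
\[
J_{s,t}\boldsymbol G_{s,t}^{-1}\nabla_{\Gamma(s)}\widehat u=\nabla_{\Gamma(s)}\widehat u .
\]
Substituting this into \eqref{eq:weighted-laplace-pullback} and using $\Delta_{\Gamma(s)}=\nabla_{\Gamma(s)}\cdot\nabla_{\Gamma(s)}$ yields \eqref{eq:conformal-laplace}. This is exactly the case $c=1$ of \eqref{eq:weighted-laplace-constant}.

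The only point needing care is that the coefficient identity holds only on tangent vectors, so the tangentiality of $\nabla_{\Gamma(s)}\widehat u$ must be used explicitly. There is no genuine obstacle here. One remark worth making is that, unlike the planar \cref{cor:equi-planar-pullback}, no constancy of $J_{s,t}$ is needed. The reason is that the weight $J_{s,t}$ in \eqref{eq:weighted-laplace-pullback} sits outside the divergence only on the left-hand side, and conformality cancels it inside.
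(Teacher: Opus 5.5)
Your proposal is correct and follows the paper's own argument: the paper observes that \eqref{eq:pullback-coefficient} with $\widehat{\boldsymbol G}_{s,t}=\boldsymbol I$ gives \eqref{eq:weighted-laplace-constant} with $c=1$ (equivalently, the classical conformal invariance of the two-dimensional Laplace--Beltrami operator), which is exactly your reduction $J_{s,t}\boldsymbol G_{s,t}^{-1}\boldsymbol P_{\Gamma(s)}=\boldsymbol P_{\Gamma(s)}$ combined with the tangentiality of $\nabla_{\Gamma(s)}\widehat u$. Your closing remark is loosely worded but harmless: on the right of \eqref{eq:weighted-laplace-pullback} the factor $J_{s,t}$ sits inside the divergence and is cancelled there by the $J_{s,t}^{-1}$ in $\boldsymbol G_{s,t}^{-1}$, so no constancy of $J_{s,t}$ is needed.
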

 
\begin{proof}
This is the conformal invariance of the Laplace--Beltrami operator in two dimensions \cite{Jost2006}; equivalently, \eqref{eq:pullback-coefficient} with $\widehat{\boldsymbol G}_{s,t}=\boldsymbol I$ gives \eqref{eq:weighted-laplace-constant} with $c=1$.
\end{proof}
 
\begin{corollary}[Equidistributed surfaces]
\label{cor:surface-equi-pullback}
Let $d=3$ and let $\boldsymbol X_{s,t}$ be equidistributed. Then, for every $u\in C^2(\Gamma(t))$,
\begin{equation}
J_{s,t}^2(\Delta_{\Gamma(t)}u)\circ\boldsymbol X_{s,t}
=\nabla_{\Gamma(s)}\cdot\bigl(\boldsymbol C_{s,t}\nabla_{\Gamma(s)}(u\circ\boldsymbol X_{s,t})\bigr).
\label{eq:surface-equi-laplace}
\end{equation}
\end{corollary}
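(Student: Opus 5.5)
The plan is to follow exactly the route of the proof of \cref{cor:equi-planar-pullback}, and to start from the weighted pullback identity \eqref{eq:weighted-laplace-pullback} of \cref{prop:weighted-laplace-pullback}. By \cref{thm:parametrization-classification}(b)(iii), equidistribution for $d=3$ means $\nabla_{\Gamma(s)}J_{s,t}=0$. Since $\Gamma(s)$ is connected, $J_{s,t}$ is then a constant on $\Gamma(s)$; by the admissibility condition it equals $\lambda_{s,t}^2$. With $\widehat u:=u\circ\boldsymbol X_{s,t}$, the proposition gives
\begin{equation*}
J_{s,t}(\Delta_{\Gamma(t)}u)\circ\boldsymbol X_{s,t}
=\nabla_{\Gamma(s)}\cdot\bigl(J_{s,t}\boldsymbol G_{s,t}^{-1}\nabla_{\Gamma(s)}\widehat u\bigr).
\end{equation*}

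Next I multiply both sides by the constant $J_{s,t}$. On the right, the constant commutes with the surface divergence, so it can be moved inside. This produces $\nabla_{\Gamma(s)}\cdot(J_{s,t}^2\boldsymbol G_{s,t}^{-1}\nabla_{\Gamma(s)}\widehat u)$. By the definition \eqref{eq:equi-C}, this is exactly $\nabla_{\Gamma(s)}\cdot(\boldsymbol C_{s,t}\nabla_{\Gamma(s)}\widehat u)$, which is \eqref{eq:surface-equi-laplace}.

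The only point needing care is that equidistribution gives constancy of $J_{s,t}$ itself, and not of $\boldsymbol G_{s,t}$. The shape part $\widehat{\boldsymbol G}_{s,t}$ remains spatially variable, which is why the result is left in divergence form rather than reduced to $\Delta_{\Gamma(s)}$ as in \cref{cor:conformal-laplace}. Pulling the constant through the divergence is legitimate because $\nabla_{\Gamma(s)}\cdot(c\boldsymbol v)=c\nabla_{\Gamma(s)}\cdot\boldsymbol v+\boldsymbol v\cdot\nabla_{\Gamma(s)}c$, and the last term vanishes. There is no real obstacle beyond this check.
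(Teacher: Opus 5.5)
Your proposal is correct and is the paper's proof: equidistribution makes $J_{s,t}$ spatially constant, so multiplying \eqref{eq:weighted-laplace-pullback} by $J_{s,t}$ and moving the constant inside the divergence produces the coefficient $J_{s,t}^2\boldsymbol G_{s,t}^{-1}=\boldsymbol C_{s,t}$. Your extra observations, that $J_{s,t}=\lambda_{s,t}^2$ and that $\widehat{\boldsymbol G}_{s,t}$ remains spatially variable, are accurate but not needed for the argument.
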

 
\begin{proof}
Since $J_{s,t}$ is constant by \eqref{eq:equidistribution}, multiply \eqref{eq:weighted-laplace-pullback} by $J_{s,t}$.
\end{proof}

\begin{remark}[Genus restriction of conformal surfaces]
\label{rem:global-conformal-genus}
\Cref{cor:conformal-laplace} requires a globally conformal map $\boldsymbol X_{s,t}$. For closed genus-zero surfaces, such a map between $\Gamma(s)$ and $\Gamma(t)$ always exists by the uniformization theorem. For surfaces of positive genus, the conformal class may vary, and a global conformal map need not exist \cite{Jost2006}. We therefore restrict the conformal formulation to closed genus-zero surfaces, and use equidistribution for surfaces of positive genus.
\end{remark}

\begin{remark}[Equidistributed surfaces]
\label{rem:surface-equi-operator}
The pullback formula \eqref{eq:surface-equi-laplace} retains a divergence structure on the reference surface $\Gamma(s)$, with the coefficient $\boldsymbol G_{s,t}^{-1}\boldsymbol P_{\Gamma(s)}=J_{s,t}^{-1}\widehat{\boldsymbol G}_{s,t}^{-1}\boldsymbol P_{\Gamma(s)}$. It thus provides a possible starting point for constructing high-order discretizations under surface equidistribution. However, equidistribution alone may lead to excessive mesh stretching, since the shape distortion $\widehat{\boldsymbol G}_{s,t}$ remains uncontrolled.
\end{remark}

\section{Local pullback formulations for surface diffusion}
\label{sec:surface-diffusion}

With the above parameterization,   surface diffusion is given by
\begin{equation}\label{eq:sd}
\begin{cases}
\boldsymbol n\cdot(\partial_t\boldsymbol X_t\circ\boldsymbol X_t^{-1})
=\Delta_{\Gamma(t)}\mathcal H,\\[2mm]
\mathcal H\boldsymbol n
=-\Delta_{\Gamma(t)}\operatorname{id}_{\Gamma(t)},
\end{cases}
\qquad
\text{on }\Gamma(t),\quad t\in[0,T].
\end{equation}

We now write surface diffusion \eqref{eq:sd} on the fixed reference hypersurface $\Gamma(s)$. We abbreviate $J_{s,t}$ by $J$ and use the same symbols $\mathcal H$ and $\boldsymbol n$ for the mean curvature and unit normal on $\Gamma(t)$ and for their pullbacks $\mathcal H_{\Gamma(t)}\circ\boldsymbol X_{s,t}$ and $\boldsymbol n_{\Gamma(t)}\circ\boldsymbol X_{s,t}$ to $\Gamma(s)$; quantities on $\Gamma(s)$ carry explicit subscripts, e.g., $\boldsymbol n_{\Gamma(s)}$.

\subsection{Conformal formulation}
\label{sec:conformal-formulation}

Let $d\in\{2,3\}$, and let $\boldsymbol X_{s,t}$ be equidistributed between connected closed curves for $d=2$, or conformal between closed genus-zero surfaces for $d=3$ (cf.~Remark~\ref{rem:global-conformal-genus}). Applying \eqref{eq:planar-equi-laplace} for $d=2$ and \eqref{eq:conformal-laplace} for $d=3$, surface diffusion \eqref{eq:sd} becomes
\begin{subequations}
\label{eq:sd-conformal-local}
\begin{align}
&J^{2/(d-1)}\boldsymbol n\cdot\partial_t\boldsymbol X_{s,t}=\Delta_{\Gamma(s)}\mathcal H,\, \quad \text{on }\Gamma(s)\times[s,T], \label{eq:sd-conformal-local-normal}\\[2mm]
&J^{2/(d-1)}\mathcal H\boldsymbol n=-\Delta_{\Gamma(s)}\boldsymbol X_{s,t},\qquad \text{on }\Gamma(s)\times[s,T], \label{eq:sd-conformal-local-curvature}
\end{align}
\end{subequations}
with $\boldsymbol X_{s,s}=\operatorname{id}_{\Gamma(s)}$. Under equidistribution ($d\in\{2,3\}$), $J$ is spatially constant, and hence
\begin{equation}
J=\overline J:=\frac1{|\Gamma(s)| }\int_{\Gamma(s)}J \,\mathrm d  \Gamma(s) = \frac{|\Gamma(t)|}{|\Gamma(s)|};
\label{eq:equi-jacobian}
\end{equation}
under conformality ($d=3$), taking the trace of \eqref{eq:conformal-coordinate-evolution} gives
\begin{equation}
J=\frac12|\nabla_{\Gamma(s)}\boldsymbol X_{s,t}|^2.
\label{eq:conformal-jacobian}
\end{equation}

\begin{theorem}[Equivalence of the conformal formulation]
\label{thm:conformal-equivalence}
Let $\boldsymbol X_{s,t}:\Gamma(s)\to\Gamma(t)$ be a smooth, orientation-preserving diffeomorphism between connected closed curves ($d=2$) or closed genus-zero surfaces ($d=3$), and let $\boldsymbol X_t=\boldsymbol X_{s,t}\circ\boldsymbol X_s$. Then $(\boldsymbol X_{s,t},\mathcal H)$ satisfies \eqref{eq:sd-conformal-local} if and only if $\boldsymbol X_{s,t}$ is equidistributed for $d=2$ and  conformal for $d=3$, and $(\boldsymbol X_t,\mathcal H)$ satisfies \eqref{eq:sd}.
\end{theorem}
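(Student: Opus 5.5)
The plan is to prove the two directions separately. The ``if'' direction follows directly from the pullback corollaries. The ``only if'' direction requires recovering the parametrization condition from \eqref{eq:sd-conformal-local-curvature}.

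\emph{Kinematics and the ``if'' direction.} Since $\boldsymbol X_t=\boldsymbol X_{s,t}\circ\boldsymbol X_s$ with $\boldsymbol X_s$ independent of $t$, we have $\partial_t\boldsymbol X_{s,t}=(\partial_t\boldsymbol X_t)\circ\boldsymbol X_s^{-1}=(\partial_t\boldsymbol X_t\circ\boldsymbol X_t^{-1})\circ\boldsymbol X_{s,t}$. Hence the left-hand side of \eqref{eq:sd-conformal-local-normal} is $J^{2/(d-1)}$ times the pullback of the normal velocity in \eqref{eq:sd}. Now assume $\boldsymbol X_{s,t}$ is equidistributed ($d=2$) or conformal ($d=3$).
\begin{itemize}
\item Apply \eqref{eq:planar-equi-laplace} or \eqref{eq:conformal-laplace} with $u=\mathcal H$. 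This gives $J^{2/(d-1)}(\Delta_{\Gamma(t)}\mathcal H)\circ\boldsymbol X_{s,t}=\Delta_{\Gamma(s)}\mathcal H$.
\item Apply the same identity componentwise with $u=\operatorname{id}_{\Gamma(t)}$, for which $\widehat u=\boldsymbol X_{s,t}$. This gives $J^{2/(d-1)}(\Delta_{\Gamma(t)}\operatorname{id})\circ\boldsymbol X_{s,t}=\Delta_{\Gamma(s)}\boldsymbol X_{s,t}$.
\end{itemize}
Since $J>0$ and $\boldsymbol X_{s,t}$ is a diffeomorphism, \eqref{eq:sd} on $\Gamma(t)$ is equivalent to \eqref{eq:sd-conformal-local} on $\Gamma(s)$. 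This proves both implications once the parametrization condition is known.

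\emph{``Only if'': curves.} It remains to show that \eqref{eq:sd-conformal-local-curvature} forces the condition; then the previous step yields \eqref{eq:sd}. For $d=2$, write $\partial_\sigma\boldsymbol X_{s,t}=J\boldsymbol\tau$, where $\boldsymbol\tau$ is the unit tangent of $\Gamma(t)$ pulled back. Then
\begin{equation*}
\Delta_{\Gamma(s)}\boldsymbol X_{s,t}=\partial_{\sigma\sigma}\boldsymbol X_{s,t}=(\partial_\sigma J)\boldsymbol\tau+J\,\partial_\sigma\boldsymbol\tau,
\end{equation*}
and $\partial_\sigma\boldsymbol\tau$ is parallel to $\boldsymbol n$. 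The left-hand side of \eqref{eq:sd-conformal-local-curvature} is normal, so the tangential component must vanish: $\partial_\sigma J=0$. This is exactly \eqref{eq:equidistribution}.

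\emph{``Only if'': surfaces (main obstacle).} For $d=3$, the same equation says that $\Delta_{\Gamma(s)}\boldsymbol X_{s,t}$ is normal to $\Gamma(t)$. The tangential part $\boldsymbol P_{\Gamma(t)}\Delta_{\Gamma(s)}\boldsymbol X_{s,t}$ is precisely the tension field of $\boldsymbol X_{s,t}$, viewed as a map between the Riemannian surfaces $\Gamma(s)\to\Gamma(t)$. Hence $\boldsymbol X_{s,t}$ is a harmonic map. I will then use the classical fact (see \cite{Jost2006}) that the Hopf differential $\Phi=\langle\partial_z\boldsymbol X_{s,t},\partial_z\boldsymbol X_{s,t}\rangle\,dz^2$ of a harmonic map is a holomorphic quadratic differential on $\Gamma(s)$. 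Because $\Gamma(s)$ has genus zero, the Riemann--Roch theorem gives $\Phi\equiv0$, so $\boldsymbol X_{s,t}$ is weakly conformal. Since it is a diffeomorphism, $J>0$ and there are no branch points, so $\boldsymbol X_{s,t}$ is conformal, i.e.\ \eqref{eq:conformal-coordinate-evolution} holds.

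As a consistency check, the normal component then agrees automatically: $\Delta_{\Gamma(s)}\boldsymbol X_{s,t}=-J\mathcal H\boldsymbol n$ by \eqref{eq:conformal-laplace} applied to the identity. The delicate points are the identification of the tangential part with the tension field and the genus-zero vanishing of $\Phi$. This step is where the genus restriction of Remark~\ref{rem:global-conformal-genus} enters essentially.
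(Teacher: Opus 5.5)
Your proposal is correct and matches the paper's proof: the reverse implication comes from the pullback identities \eqref{eq:planar-equi-laplace} and \eqref{eq:conformal-laplace}, and the forward implication comes from the tangential part of \eqref{eq:sd-conformal-local-curvature}, which makes $\boldsymbol X_{s,t}$ harmonic and yields $\partial_\sigma J=0$ for curves and conformality for genus-zero surfaces. The only difference is that you spell out, via the holomorphic Hopf differential and its vanishing on the sphere, the fact the paper simply cites from \cite{HeleinWood2008} (a harmonic diffeomorphism between closed genus-zero surfaces is conformal).
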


\begin{proof}
The reverse implication follows from the derivation above. Conversely, the tangential part of \eqref{eq:sd-conformal-local-curvature} gives $(\boldsymbol I-\boldsymbol n_{\Gamma(t)}\otimes\boldsymbol n_{\Gamma(t)})\Delta_{\Gamma(s)}\boldsymbol X_{s,t}=\boldsymbol0$, i.e., $\boldsymbol X_{s,t}$ is a harmonic map. For $d=2$, with the arc length $\sigma$ on $\Gamma(s)$, this gives $0=\partial_{\sigma\sigma}\boldsymbol X_{s,t}\cdot\partial_\sigma\boldsymbol X_{s,t}=J\partial_\sigma J$, i.e., equidistribution. For $d=3$, a harmonic diffeomorphism between closed genus-zero surfaces is conformal \cite{HeleinWood2008}. Hence \eqref{eq:planar-equi-laplace} or \eqref{eq:conformal-laplace} applies, and \eqref{eq:sd-conformal-local} is the pullback of \eqref{eq:sd}.
\end{proof}

\subsection{Unified weak conformal formulation}
\label{sec:unified-weak}

Let $d\in\{2,3\}$ and fix $s\in[0,T)$; we use the spaces and inner products of Section~\ref{sec:basic-notation} on $\Gamma(s)$. For $d=2$, let $\boldsymbol\tau$ be the counterclockwise unit tangent to $\Gamma(s)$, $(v_1,v_2)^\perp:=(-v_2,v_1)$, and $\boldsymbol n_{\Gamma(s)}=-\boldsymbol\tau^\perp$; for $d=3$, let $(\boldsymbol\tau_1,\boldsymbol\tau_2)$ be a local positively oriented orthonormal tangent frame on $\Gamma(s)$. We define the normal--Jacobian vector
\begin{equation}
\boldsymbol N(\boldsymbol X_{s,t}):=
\begin{cases}
-(\partial_{\boldsymbol\tau}\boldsymbol X_{s,t})^\perp,&d=2,\\
\partial_{\boldsymbol\tau_1}\boldsymbol X_{s,t}\times\partial_{\boldsymbol\tau_2}\boldsymbol X_{s,t},&d=3,
\end{cases}
\qquad
\boldsymbol N=J\boldsymbol n,
\label{eq:normal-jacobian-tangent-formulas}
\end{equation}
abbreviated by $\boldsymbol N$. By \eqref{eq:equi-jacobian}, $J^{2/(d-1)}\boldsymbol n=\overline J^{3-d}\boldsymbol N$. Testing \eqref{eq:sd-conformal-local} with $\phi$ and $\boldsymbol\omega$ and integrating by parts gives the unified weak formulation: find $(\boldsymbol X_{s,t},\mathcal H)\in[H^1(\Gamma(s))]^d\times H^1(\Gamma(s))$, with $\boldsymbol X_{s,s}=\operatorname{id}_{\Gamma(s)}$, such that, for $t\in(s,T]$,
\begin{subequations}
\label{eq:unified-sd-weak}
\begin{align}
&\overline J^{3-d}(\boldsymbol N\cdot\partial_t\boldsymbol X_{s,t},\phi)_{\Gamma(s)}
=-(\nabla_{\Gamma(s)}\mathcal H,\nabla_{\Gamma(s)}\phi)_{\Gamma(s)},
&&\forall\phi\in H^1(\Gamma(s)), \label{eq:unified-sd-weak-normal}\\[2mm]
&\overline J^{3-d}(\mathcal H\boldsymbol N,\boldsymbol\omega)_{\Gamma(s)}
=\langle\nabla_{\Gamma(s)}\boldsymbol X_{s,t},\nabla_{\Gamma(s)}\boldsymbol\omega\rangle_{\Gamma(s)},
&&\forall\boldsymbol\omega\in[H^1(\Gamma(s))]^d. \label{eq:unified-sd-weak-curvature}
\end{align}
\end{subequations}

Let $|\Omega(t)|$ denote the enclosed area ($d=2$) or volume ($d=3$), and $|\Gamma(t)|$ the length or area of $\Gamma(t)$.

\begin{theorem}[Preservation and dissipation]
\label{thm:unified-weak-structure}
A sufficiently smooth solution $(\boldsymbol X_{s,t},\mathcal H)$ of \eqref{eq:unified-sd-weak} satisfies
\begin{equation*}
|\Omega(t_2)|=|\Omega(t_1)|,\qquad |\Gamma(t_2)|\leq|\Gamma(t_1)|,\qquad s\leq t_1\leq t_2\leq T.
\end{equation*}
\end{theorem}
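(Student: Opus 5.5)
The plan is to prove two identities: first $\frac{\mathrm d}{\mathrm dt}|\Omega(t)|=(\boldsymbol N\cdot\partial_t\boldsymbol X_{s,t},1)_{\Gamma(s)}$, and second $\frac{\mathrm d}{\mathrm dt}|\Gamma(t)|=\overline J^{3-d}(\mathcal H\boldsymbol N,\partial_t\boldsymbol X_{s,t})_{\Gamma(s)}$. Each is then combined with \eqref{eq:unified-sd-weak} for a suitable choice of test function. Throughout, $\boldsymbol X_{s,t}$ parametrizes $\Gamma(t)$ over the fixed domain $\Gamma(s)$, and $\boldsymbol N=J\boldsymbol n$ by \eqref{eq:normal-jacobian-tangent-formulas}.

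\textbf{Volume.} By the Reynolds transport theorem, $\frac{\mathrm d}{\mathrm dt}|\Omega(t)|=\int_{\Gamma(t)}V\,\mathrm d\Gamma(t)$. Pulling this back to $\Gamma(s)$ with the change of measure $J$ gives $\int_{\Gamma(s)}(\boldsymbol n\cdot\partial_t\boldsymbol X_{s,t})J\,\mathrm d\Gamma(s)=(\boldsymbol N\cdot\partial_t\boldsymbol X_{s,t},1)_{\Gamma(s)}$. Now take $\phi\equiv1$ in \eqref{eq:unified-sd-weak-normal}. The right-hand side vanishes, so $\overline J^{3-d}\frac{\mathrm d}{\mathrm dt}|\Omega(t)|=0$. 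Since $\overline J>0$, the volume is constant on $[s,T]$.

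\textbf{Energy.} Write $|\Gamma(t)|=\int_{\Gamma(s)}J\,\mathrm d\Gamma(s)$. The key step is to show
\begin{equation*}
\frac{\mathrm d}{\mathrm dt}|\Gamma(t)|=\overline J^{3-d}(\mathcal H\boldsymbol N,\partial_t\boldsymbol X_{s,t})_{\Gamma(s)}
\end{equation*}
under the standing admissibility assumption. I would argue through the exact flow on $\Gamma(t)$, which is available because the solution is sufficiently smooth. By \eqref{eq:sd-conformal-local-curvature} together with \cref{thm:conformal-equivalence}, or by reading off the strong form of \eqref{eq:unified-sd-weak-curvature}, the pair solves \eqref{eq:sd}. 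In particular $\mathcal H\boldsymbol n=-\Delta_{\Gamma(t)}\operatorname{id}$. The classical first variation of area then gives $\frac{\mathrm d}{\mathrm dt}|\Gamma(t)|=\int_{\Gamma(t)}\mathcal H V\,\mathrm d\Gamma(t)$. Pulling back as before yields $(\mathcal H\boldsymbol N\cdot\partial_t\boldsymbol X_{s,t},1)_{\Gamma(s)}$.

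For $d=3$ the factor $\overline J^{0}=1$, so this is already the claimed identity. For $d=2$ we use \eqref{eq:sd-conformal-local-normal} in the form $\overline J\,\boldsymbol N\cdot\partial_t\boldsymbol X_{s,t}=\partial_{\sigma\sigma}\mathcal H$. This gives
\begin{equation*}
\overline J\,\frac{\mathrm d}{\mathrm dt}|\Gamma(t)|=(\mathcal H,\overline J\,\boldsymbol N\cdot\partial_t\boldsymbol X_{s,t})_{\Gamma(s)}.
\end{equation*}

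\textbf{Dissipation.} Take $\phi=\mathcal H$ in \eqref{eq:unified-sd-weak-normal}. This gives
\begin{equation*}
\overline J^{3-d}(\boldsymbol N\cdot\partial_t\boldsymbol X_{s,t},\mathcal H)_{\Gamma(s)}=-\|\nabla_{\Gamma(s)}\mathcal H\|^2_{L^2(\Gamma(s))}\le0.
\end{equation*}
Hence $\overline J^{3-d}\frac{\mathrm d}{\mathrm dt}|\Gamma(t)|\le0$, and therefore $\frac{\mathrm d}{\mathrm dt}|\Gamma(t)|\le0$ because $\overline J>0$. Integrating over $[t_1,t_2]$ gives both claims of the theorem.

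\textbf{Alternative route and main obstacle.} A purely weak-form route would instead test \eqref{eq:unified-sd-weak-curvature} with $\boldsymbol\omega=\partial_t\boldsymbol X_{s,t}$. This gives $\frac12\frac{\mathrm d}{\mathrm dt}\|\nabla_{\Gamma(s)}\boldsymbol X_{s,t}\|^2$, which must then be identified with $\frac{\mathrm d}{\mathrm dt}|\Gamma(t)|$ up to the factor $\overline J^{3-d}$.
\begin{itemize}
\item For $d=3$ this identification uses \eqref{eq:conformal-jacobian}: $\frac12|\nabla\boldsymbol X|^2=J$, so $\frac12\frac{\mathrm d}{\mathrm dt}\|\nabla\boldsymbol X\|^2=\frac{\mathrm d}{\mathrm dt}|\Gamma(t)|$.
\item For $d=2$ it uses $|\partial_\sigma\boldsymbol X|^2=J^2=\overline J^2$, so $\frac12\frac{\mathrm d}{\mathrm dt}\|\nabla\boldsymbol X\|^2=|\Gamma(s)|\,\overline J\,\dot{\overline J}=\overline J\,\frac{\mathrm d}{\mathrm dt}|\Gamma(t)|$.
\end{itemize}
The main obstacle is exactly this step: it requires that the equidistribution or conformality property holds for the weak solution. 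I would justify it through the converse direction of \cref{thm:conformal-equivalence}, namely that the tangential part of the curvature equation forces a harmonic map, which is then equidistributed or conformal. Once that is in place, combining with $\phi=\mathcal H$ gives the same dissipation inequality as above.
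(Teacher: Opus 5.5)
Your proof is correct and follows the paper's argument: you pull back the transport identities to $\Gamma(s)$ and test \eqref{eq:unified-sd-weak-normal} with $\phi=1$ and $\phi=\mathcal H$. Your explicit appeal to the converse of \cref{thm:conformal-equivalence} to identify $\mathcal H$ with the true mean curvature makes precise what the paper leaves implicit in ``by the transport identities''. There is one harmless slip: the identity announced in your plan, $\frac{\mathrm d}{\mathrm dt}|\Gamma(t)|=\overline J^{3-d}(\mathcal H\boldsymbol N,\partial_t\boldsymbol X_{s,t})_{\Gamma(s)}$, is off by a factor $\overline J$ for $d=2$. The correct identity, which you actually derive and use, is $\frac{\mathrm d}{\mathrm dt}|\Gamma(t)|=(\mathcal H\boldsymbol N,\partial_t\boldsymbol X_{s,t})_{\Gamma(s)}=-\overline J^{d-3}\|\nabla_{\Gamma(s)}\mathcal H\|^2_{L^2(\Gamma(s))}$, and the sign conclusion is unaffected.
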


\begin{proof}
By the transport identities \cite{BarrettGarckeNuernberg2020} and a change of variables to $\Gamma(s)$, using $\boldsymbol{N} = J\boldsymbol{n} $ and $(\partial_t\boldsymbol X_t\circ\boldsymbol X_t^{-1})\circ\boldsymbol X_{s,t}=\partial_t\boldsymbol X_{s,t}$,
\begin{align}
\frac{\mathrm d}{\mathrm dt}|\Omega(t)|
&=(\boldsymbol N\cdot\partial_t\boldsymbol X_{s,t},1)_{\Gamma(s)}
=0, \label{eq:unified-volume}\\
\frac{\mathrm d}{\mathrm dt}|\Gamma(t)|
&=(\mathcal H\boldsymbol N,\partial_t\boldsymbol X_{s,t})_{\Gamma(s)}
=-\overline J^{d-3}\|\nabla_{\Gamma(s)}\mathcal H\|_{L^2(\Gamma(s))}^2\leq0, \label{eq:unified-dissipation}
\end{align}
where the last equalities follow from \eqref{eq:unified-sd-weak-normal} with $\phi=1$ and $\phi=\mathcal H$. Integrating over $[t_1,t_2]$ proves the result.
\end{proof}
\subsection{Equidistribution formulation}
\label{sec:equi-formulation}

Let $d\in\{2,3\}$, and let $\boldsymbol X_{s,t}$ be equidistributed between connected closed curves for $d=2$, or between closed surfaces of arbitrary genus for $d=3$. By \eqref{eq:planar-equi-laplace-divergence} and \eqref{eq:surface-equi-laplace}, surface diffusion \eqref{eq:sd} becomes
\begin{subequations}
\label{eq:equi-sd-local}
\begin{align}
&J^2\boldsymbol n\cdot\partial_t\boldsymbol X_{s,t}=\nabla_{\Gamma(s)}\cdot\bigl(\boldsymbol C_{s,t}\nabla_{\Gamma(s)}\mathcal H\bigr), \quad \quad \text{on }\Gamma(s)\times[s,T], \label{eq:equi-sd-local-velocity}\\[2mm]
&J^2\mathcal H\boldsymbol n=-\nabla_{\Gamma(s)}\cdot\bigl((\nabla_{\Gamma(s)}\boldsymbol X_{s,t})\boldsymbol C_{s,t}\bigr), \qquad \text{on }\Gamma(s)\times[s,T], \label{eq:equi-sd-local-curvature}
\end{align}
\end{subequations}
with $\boldsymbol X_{s,s}=\operatorname{id}_{\Gamma(s)}$. For $d=2$, \eqref{eq:equi-sd-local} coincides with \eqref{eq:sd-conformal-local}.

\begin{theorem}[Equivalence of the equidistribution formulation]
\label{thm:equi-equivalence}
Let $\boldsymbol X_{s,t}:\Gamma(s)\to\Gamma(t)$ be a smooth, orientation-preserving diffeomorphism between connected closed curves ($d=2$) or closed surfaces ($d=3$), and let $\boldsymbol X_t=\boldsymbol X_{s,t}\circ\boldsymbol X_s$. Then $(\boldsymbol X_{s,t},\mathcal H)$ satisfies \eqref{eq:equi-sd-local} if and only if $\boldsymbol X_{s,t}$ is equidistributed and $(\boldsymbol X_t,\mathcal H)$ satisfies \eqref{eq:sd}.
\end{theorem}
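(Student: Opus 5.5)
The reverse implication is immediate. If $\boldsymbol X_{s,t}$ is equidistributed, then \cref{cor:equi-planar-pullback} ($d=2$) and \cref{cor:surface-equi-pullback} ($d=3$) apply. Take $u=\mathcal H$ in \eqref{eq:planar-equi-laplace-divergence} or \eqref{eq:surface-equi-laplace}, and take $u$ to be each component of $\operatorname{id}_{\Gamma(t)}$. Pulling \eqref{eq:sd} back by $\boldsymbol X_{s,t}$ then gives \eqref{eq:equi-sd-local}, using $(\partial_t\boldsymbol X_t\circ\boldsymbol X_t^{-1})\circ\boldsymbol X_{s,t}=\partial_t\boldsymbol X_{s,t}$. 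For the componentwise identity we note that $\nabla_{\Gamma(s)}\cdot((\nabla_{\Gamma(s)}\boldsymbol X_{s,t})\boldsymbol C_{s,t})$ is the row-wise divergence of $\boldsymbol C_{s,t}\nabla_{\Gamma(s)}X_{s,t}^i$, by the symmetry of $\boldsymbol C_{s,t}$.

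For the forward implication, the plan is to mimic the proof of \cref{thm:conformal-equivalence}: extract equidistribution from the tangential part of \eqref{eq:equi-sd-local-curvature}. The key step is an identity valid for an arbitrary diffeomorphism, without assuming constant $J$. By \cref{prop:weighted-laplace-pullback} applied componentwise to $u=\operatorname{id}_{\Gamma(t)}$,
\begin{equation*}
\nabla_{\Gamma(s)}\cdot\bigl(J\boldsymbol G_{s,t}^{-1}\nabla_{\Gamma(s)}X^i_{s,t}\bigr)=J(\Delta_{\Gamma(t)}x_i)\circ\boldsymbol X_{s,t}=-J\mathcal H n_i .
\end{equation*}
Writing $\boldsymbol C_{s,t}=J\cdot(J\boldsymbol G_{s,t}^{-1})$ and using the product rule gives
\begin{equation*}
-\nabla_{\Gamma(s)}\cdot\bigl((\nabla_{\Gamma(s)}\boldsymbol X_{s,t})\boldsymbol C_{s,t}\bigr)=J^2\mathcal H\boldsymbol n-J\,(\nabla_{\Gamma(s)}\boldsymbol X_{s,t})\boldsymbol G_{s,t}^{-1}\nabla_{\Gamma(s)}J .
\end{equation*}
Substituting this into \eqref{eq:equi-sd-local-curvature} yields $J(\nabla_{\Gamma(s)}\boldsymbol X_{s,t})\boldsymbol G_{s,t}^{-1}\nabla_{\Gamma(s)}J=\boldsymbol 0$. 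Here $\mathcal H$ denotes the true mean curvature of $\Gamma(t)$, and this identification is itself forced by taking the normal component.

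The vector $\boldsymbol G_{s,t}^{-1}\nabla_{\Gamma(s)}J$ is tangent to $\Gamma(s)$. The differential $\nabla_{\Gamma(s)}\boldsymbol X_{s,t}$ is injective on tangent vectors, and $\boldsymbol G_{s,t}^{-1}$ is invertible. Together with $J>0$, this gives $\nabla_{\Gamma(s)}J=0$, i.e.\ equidistribution \eqref{eq:equidistribution}. Since the surface is connected, $J$ is constant.

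Once equidistribution holds, \cref{cor:equi-planar-pullback} and \cref{cor:surface-equi-pullback} convert \eqref{eq:equi-sd-local} back into the pullback of \eqref{eq:sd}. Because $\boldsymbol X_{s,t}$ is a diffeomorphism, \eqref{eq:sd} then holds on $\Gamma(t)$ for $\boldsymbol X_t=\boldsymbol X_{s,t}\circ\boldsymbol X_s$.

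The main obstacle is the product-rule computation. One must keep careful track of the tangential versus normal components. One must also check that $\boldsymbol G_{s,t}^{-1}\nabla_{\Gamma(s)}J$ is tangential, which holds because $\boldsymbol G_{s,t}$ preserves tangent spaces. Finally, one must check that the divergence of the matrix field is interpreted row-wise, consistently with \eqref{eq:weighted-laplace-pullback}.
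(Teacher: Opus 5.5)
Your proposal is correct and follows essentially the same route as the paper. Both arguments apply the Piola identity of \cref{prop:weighted-laplace-pullback} componentwise to $\operatorname{id}_{\Gamma(t)}$, use the product rule with $\boldsymbol C_{s,t}=J\,(J\boldsymbol G_{s,t}^{-1})$, and split the curvature equation into normal and tangential parts to force $(\nabla_{\Gamma(s)}\boldsymbol X_{s,t})\boldsymbol G_{s,t}^{-1}\nabla_{\Gamma(s)}J=\boldsymbol 0$, hence $\nabla_{\Gamma(s)}J=\boldsymbol 0$, before invoking the corollaries; your remark that the normal component also identifies the unknown $\mathcal H$ with $\mathcal H_{\Gamma(t)}\circ\boldsymbol X_{s,t}$ makes explicit a step the paper leaves implicit.
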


\begin{proof}
The reverse implication follows from the derivation above. Conversely, applying \eqref{eq:weighted-laplace-pullback} componentwise to $\operatorname{id}_{\Gamma(t)}$, with $\Delta_{\Gamma(t)}\operatorname{id}_{\Gamma(t)}=-\mathcal H_{\Gamma(t)}\boldsymbol n_{\Gamma(t)}$, and using the product rule with $\boldsymbol C_{s,t}=J[J\boldsymbol G_{s,t}^{-1}]$ gives
\begin{equation*}
\nabla_{\Gamma(s)}\cdot\bigl((\nabla_{\Gamma(s)}\boldsymbol X_{s,t})\boldsymbol C_{s,t}\bigr)
=-J^2(\mathcal H_{\Gamma(t)}\boldsymbol n_{\Gamma(t)})\circ\boldsymbol X_{s,t}+J(\nabla_{\Gamma(s)}\boldsymbol X_{s,t})\boldsymbol G_{s,t}^{-1}\nabla_{\Gamma(s)}J.
\end{equation*}
Equation~\eqref{eq:equi-sd-local-curvature} shows that the left-hand side is normal to $\Gamma(t)$. In the decomposition above, the first term on the right-hand side is normal, while the second is tangent: indeed, $\boldsymbol G_{s,t}^{-1}\nabla_{\Gamma(s)}J\in T_p\Gamma(s)$ and $\nabla_{\Gamma(s)}\boldsymbol X_{s,t}$ maps $T_p\Gamma(s)$ into $T_{\boldsymbol X_{s,t}(p)}\Gamma(t)$. Hence the tangential term must vanish,
$(\nabla_{\Gamma(s)}\boldsymbol X_{s,t})\boldsymbol G_{s,t}^{-1}\nabla_{\Gamma(s)}J=\boldsymbol0$.
Since $\boldsymbol G_{s,t}^{-1}$ is invertible on $T_p\Gamma(s)$ and $\nabla_{\Gamma(s)}\boldsymbol X_{s,t}$ is injective on this space, we obtain $\nabla_{\Gamma(s)}J=\boldsymbol0$. Thus $J$ is spatially constant on the connected hypersurface $\Gamma(s)$, so \eqref{eq:planar-equi-laplace} or \eqref{eq:surface-equi-laplace} applies, and \eqref{eq:equi-sd-local} is the pullback of \eqref{eq:sd}.
\end{proof}

\subsection{Unified weak equidistribution formulation}
\label{sec:equi-weak}

Let $d\in\{2,3\}$ and fix $s\in[0,T)$. By \eqref{eq:equi-jacobian}, $J^2\boldsymbol n=\overline J\boldsymbol N$. Testing \eqref{eq:equi-sd-local} with $\phi$ and $\boldsymbol\omega$ and integrating by parts gives the unified weak formulation: find $(\boldsymbol X_{s,t},\mathcal H)\in[H^1(\Gamma(s))]^d\times H^1(\Gamma(s))$, with $\boldsymbol X_{s,s}=\operatorname{id}_{\Gamma(s)}$, such that, for $t\in(s,T]$,
\begin{subequations}
\label{eq:equi-sd-weak}
\begin{align}
&\overline J(\boldsymbol N\cdot\partial_t\boldsymbol X_{s,t},\phi)_{\Gamma(s)}
=-\bigl(\boldsymbol C_{s,t}\nabla_{\Gamma(s)}\mathcal H,\nabla_{\Gamma(s)}\phi\bigr)_{\Gamma(s)},
&&\forall\phi\in H^1(\Gamma(s)), \label{eq:equi-sd-weak-normal}\\[2mm]
&\overline J(\mathcal H\boldsymbol N,\boldsymbol\omega)_{\Gamma(s)}
=\bigl\langle(\nabla_{\Gamma(s)}\boldsymbol X_{s,t})\boldsymbol C_{s,t},\nabla_{\Gamma(s)}\boldsymbol\omega\bigr\rangle_{\Gamma(s)},
&&\forall\boldsymbol\omega\in[H^1(\Gamma(s))]^d, \label{eq:equi-sd-weak-curvature}
\end{align}
\end{subequations}

\begin{theorem}[Preservation and dissipation]
\label{thm:equi-weak-structure}
A sufficiently smooth solution $(\boldsymbol X_{s,t},\mathcal H)$ of \eqref{eq:equi-sd-weak} satisfies
\begin{equation*}
|\Omega(t_2)|=|\Omega(t_1)|,\qquad |\Gamma(t_2)|\leq|\Gamma(t_1)|,\qquad s\leq t_1\leq t_2\leq T.
\end{equation*}
\end{theorem}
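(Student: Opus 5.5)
The plan mirrors the proof of \cref{thm:unified-weak-structure}: compute both time derivatives by the transport identities \cite{BarrettGarckeNuernberg2020}, pull them back to $\Gamma(s)$ with $\boldsymbol N=J\boldsymbol n$, and then use the weak equations \eqref{eq:equi-sd-weak} with well-chosen test functions. Changing variables with $(\partial_t\boldsymbol X_t\circ\boldsymbol X_t^{-1})\circ\boldsymbol X_{s,t}=\partial_t\boldsymbol X_{s,t}$ gives
\begin{equation*}
\frac{\mathrm d}{\mathrm dt}|\Omega(t)|=(\boldsymbol N\cdot\partial_t\boldsymbol X_{s,t},1)_{\Gamma(s)},
\qquad
\frac{\mathrm d}{\mathrm dt}|\Gamma(t)|=(\mathcal H\boldsymbol N,\partial_t\boldsymbol X_{s,t})_{\Gamma(s)}.
\end{equation*}

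For the enclosed area or volume, I test \eqref{eq:equi-sd-weak-normal} with $\phi=1$. Since $\nabla_{\Gamma(s)}1=\boldsymbol 0$, the right-hand side vanishes. Moreover $\overline J=|\Gamma(t)|/|\Gamma(s)|>0$, so $(\boldsymbol N\cdot\partial_t\boldsymbol X_{s,t},1)_{\Gamma(s)}=0$, and hence $\frac{\mathrm d}{\mathrm dt}|\Omega(t)|=0$.

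For the perimeter or surface area, I test \eqref{eq:equi-sd-weak-normal} with $\phi=\mathcal H$, which gives
\begin{equation*}
\overline J(\boldsymbol N\cdot\partial_t\boldsymbol X_{s,t},\mathcal H)_{\Gamma(s)}=-\bigl(\boldsymbol C_{s,t}\nabla_{\Gamma(s)}\mathcal H,\nabla_{\Gamma(s)}\mathcal H\bigr)_{\Gamma(s)}\le 0.
\end{equation*}
The sign holds because $\boldsymbol C_{s,t}=J^2\boldsymbol G_{s,t}^{-1}$ is symmetric positive definite. The left-hand side equals $\overline J\,(\mathcal H\boldsymbol N,\partial_t\boldsymbol X_{s,t})_{\Gamma(s)}$. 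Dividing by $\overline J>0$ therefore yields $\frac{\mathrm d}{\mathrm dt}|\Gamma(t)|\le0$. Integrating both derivatives over $[t_1,t_2]$ completes the argument.

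The main point to check is the identification of $(\mathcal H\boldsymbol N,\partial_t\boldsymbol X_{s,t})_{\Gamma(s)}$ with $\frac{\mathrm d}{\mathrm dt}|\Gamma(t)|$. This requires that the $\mathcal H$ appearing in the weak formulation really is the mean curvature of $\Gamma(t)$. For a smooth solution, I would obtain this by integrating \eqref{eq:equi-sd-weak-curvature} back by parts to recover \eqref{eq:equi-sd-local-curvature}. By \cref{thm:equi-equivalence} this forces equidistribution, so $J=\overline J$, and gives $\mathcal H\boldsymbol n=-\Delta_{\Gamma(t)}\operatorname{id}$. The first variation of area, $\int_{\Gamma(t)}\mathcal H\boldsymbol n\cdot\boldsymbol v$, then pulls back to the stated expression. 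Alternatively, and avoiding \cref{thm:equi-equivalence}, I could test \eqref{eq:equi-sd-weak-curvature} with $\boldsymbol\omega=\partial_t\boldsymbol X_{s,t}$ and check directly that $\langle(\nabla_{\Gamma(s)}\boldsymbol X_{s,t})\boldsymbol C_{s,t},\nabla_{\Gamma(s)}\partial_t\boldsymbol X_{s,t}\rangle_{\Gamma(s)}=\overline J\,\frac{\mathrm d}{\mathrm dt}\int_{\Gamma(s)}J\,\mathrm d\Gamma(s)$. This uses $\partial_tJ=J\,\boldsymbol G_{s,t}^{-1}:(\nabla\boldsymbol X)^T\nabla\partial_t\boldsymbol X$, which is the derivative of $\sqrt{\det \boldsymbol G_{s,t}}$, together with $J=\overline J$.
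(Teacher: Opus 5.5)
Your proposal is correct and follows the paper's proof. The paper likewise uses the first (transport) identities in \eqref{eq:unified-volume}--\eqref{eq:unified-dissipation} and tests \eqref{eq:equi-sd-weak-normal} with $\phi=1$ and $\phi=\mathcal H$, giving $\frac{\mathrm d}{\mathrm dt}|\Gamma(t)|=-\overline J^{-1}(\boldsymbol C_{s,t}\nabla_{\Gamma(s)}\mathcal H,\nabla_{\Gamma(s)}\mathcal H)_{\Gamma(s)}\le 0$; your extra paragraph on why $\mathcal H$ is the true mean curvature (via the tangential argument of \cref{thm:equi-equivalence}, or via the derivative of $\sqrt{\det\boldsymbol G_{s,t}}$ together with the equidistribution $J=\overline J$ assumed in the setting) spells out what the paper leaves implicit in ``sufficiently smooth solution.''
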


\begin{proof}
By the first identities in \eqref{eq:unified-volume}--\eqref{eq:unified-dissipation} and \eqref{eq:equi-sd-weak-normal} with $\phi=1$ and $\phi=\mathcal H$,
\begin{equation*}
\frac{\mathrm d}{\mathrm dt}|\Omega(t)|=0,
\qquad
\frac{\mathrm d}{\mathrm dt}|\Gamma(t)|
=-\overline J^{-1}\bigl(\boldsymbol C_{s,t}\nabla_{\Gamma(s)}\mathcal H,\nabla_{\Gamma(s)}\mathcal H\bigr)_{\Gamma(s)}\leq0,
\end{equation*}
since $\boldsymbol C_{s,t}$ is positive definite. Integrating over $[t_1,t_2]$ proves the result.
\end{proof}
\section{Structure-preserving collocation schemes}
\label{sec:direct-rk}

We discretize \eqref{eq:unified-sd-weak} and \eqref{eq:equi-sd-weak} in space by isoparametric finite elements and in time by Runge--Kutta (RK) collocation. On each time step, the reference hypersurface is the current discrete hypersurface, so that all stages are posed on one fixed mesh.

\subsection{Isoparametric geometry and Runge--Kutta collocation}
\label{sec:rk-geometry}

Let $t_m=m\tau$, $m=0,\ldots,M$, with $\tau=T/M$, and let $\Gamma^m\subset\mathbb R^d$ be a closed, consistently oriented isoparametric hypersurface enclosing $\Omega^m$. Following \cite[Sections~2.1 and~3.1]{GarckeNuernbergPraetoriusZhang2025}, let $\widehat K$ be the reference interval ($d=2$) or triangle ($d=3$), and write $\Gamma^m=\bigcup_{j=1}^{J}\sigma_j^m$ with $\sigma_j^m=\boldsymbol F_j^m(\widehat K)$ and regular maps $\boldsymbol F_j^m\in[\mathcal P_k(\widehat K)]^d$, $k\geq1$. Let
\begin{equation}\label{eq:high-order-fe-space}
\mathbb V^m_k := \left\{ v^h\in H^1(\Gamma^m): v^h\circ\boldsymbol F_j^m\in\mathcal P_k(\widehat K), \ j=1,\ldots,J \right\}.
\end{equation}
Surface differential operators and inner products are taken elementwise and assembled over $\Gamma^m$.

For an orientation-preserving, elementwise regular $\boldsymbol Y\in[\mathbb V_k^m]^d$, we define $\boldsymbol N^h(\boldsymbol Y)$ elementwise by \eqref{eq:normal-jacobian-tangent-formulas}, the discrete Jacobian $J^h(\boldsymbol Y):=|\boldsymbol N^h(\boldsymbol Y)|$, and the mean Jacobian
\begin{equation}\label{eq:equi-rk-area-ratio}
\overline J^h(\boldsymbol Y) :=\frac{|\boldsymbol Y(\Gamma^m)|}{|\Gamma^m|}= \frac{1}{|\Gamma^m|}\bigl(J^h(\boldsymbol Y),1\bigr)_{\Gamma^m}.
\end{equation}

On $[t_m,t_{m+1}]$, the local flow map from $\Gamma^m$ is approximated by a polynomial in time with values in $[\mathbb V_k^m]^d$. Let $\theta=(t-t_m)/\tau\in[0,1]$, choose collocation nodes $0<c_1<\cdots<c_s\leq1$ with Lagrange basis $\ell_i\in\mathcal P_{s-1}$, $\ell_i(c_j)=\delta_{ij}$, and let
\begin{equation*}
a_{ij}=\int_0^{c_i}\ell_j(\theta)\,\mathrm d\theta,\qquad b_i=\int_0^1\ell_i(\theta)\,\mathrm d\theta
\end{equation*}
be the associated RK coefficients. Given stage velocities $\delta\boldsymbol X^{m,i}\in[\mathbb V_k^m]^d$, the collocation polynomial is the unique $\boldsymbol Y$ of degree $s$ in $\theta$ with $\boldsymbol Y(0)=\boldsymbol X^m:=\operatorname{id}_{\Gamma^m}$ and $ \partial_\theta\boldsymbol Y(\theta)=\tau\sum_{i=1}^s\ell_i(\theta)\delta\boldsymbol X^{m,i}$
so that $\partial_\theta\boldsymbol Y(c_i)=\tau\delta\boldsymbol X^{m,i}$. Thus, \begin{equation}\label{eq:equi-rk-reconstruction}
\boldsymbol Y(\theta):=\boldsymbol X^m + \tau\sum_{j=1}^s\left(\int_0^\theta\ell_j(\xi)\,\mathrm d\xi\right)\delta\boldsymbol X^{m,j}.
\end{equation}
Its stage values and endpoint are the usual RK stages and update,
\begin{equation}\label{eq:stage-values-rk}
\boldsymbol X^{m,i}:=\boldsymbol Y(c_i)=\boldsymbol X^m+\tau\sum_{j=1}^s a_{ij}\delta\boldsymbol X^{m,j},
\quad
\boldsymbol X^{m+1}:=\boldsymbol Y(1)=\boldsymbol X^m+\tau\sum_{i=1}^s b_i\delta\boldsymbol X^{m,i},
\end{equation}
and $\Gamma^{m+1}:=\boldsymbol X^{m+1}(\Gamma^m)$ is the reference hypersurface for the next step. Throughout, we assume that each $\boldsymbol Y(\theta)$ is an orientation-preserving homeomorphism onto its image and regular on every element.

An RK method is \emph{algebraically stable} if $b_i\geq0$ and the symmetric matrix
\begin{equation}\label{eq:equi-rk-stability-matrix}
\mathsf M_{ij}:=b_i a_{ij}+b_j a_{ji}-b_i b_j,\qquad i,j=1,\ldots,s,
\end{equation}
is positive semidefinite. Gauss and Radau IIA collocation methods are algebraically stable with $b_i>0$ \cite[Section~IV.12]{HairerWanner1996}; in the computations, we use the implicit midpoint rule ($s=1$, $c_1=\frac12$) and the two-stage Radau IIA method ($c_1=\frac13$, $c_2=1$), of classical orders two and three.

\subsection{Structure-preserving discretization of the conformal formulation}
\label{sec:direct-rk-scheme}

In this subsection, we discretize \eqref{eq:unified-sd-weak} with an RK collocation method. Each stage equation is \eqref{eq:unified-sd-weak} at $\theta=c_i$, with $\partial_t\boldsymbol X_{s,t}$ replaced by $\delta\boldsymbol X^{m,i}$ and $\boldsymbol N$ replaced by the path average
\begin{equation}\label{eq:equi-rk-normal-average}
\overline{\boldsymbol N}^{m,i}:=\frac1{b_i}\int_0^1\ell_i(\theta)
\boldsymbol N^h(\boldsymbol Y(\theta))\,\mathrm d\theta.
\end{equation}
Since $\boldsymbol N^h(\boldsymbol Y(\theta))$ has degree $s$ ($d=2$) or $2s$ ($d=3$) in $\theta$, the integral is evaluated exactly by Gauss quadrature. With $\overline J^{m,i}:=\overline J^h(\boldsymbol X^{m,i})$, the scheme reads: find $(\delta\boldsymbol X^{m,i},\mathcal H^{m,i})\in[\mathbb V_k^m]^d\times\mathbb V_k^m$, $i=1,\ldots,s$, such that
\begin{subequations}\label{eq:direct-rk-scheme}
\begin{align}
&(\overline J^{m,i})^{3-d}(\overline{\boldsymbol N}^{m,i}\cdot\delta\boldsymbol X^{m,i},\phi^h)_{\Gamma^m}
=-(\nabla_{\Gamma^m}\mathcal H^{m,i},\nabla_{\Gamma^m}\phi^h)_{\Gamma^m},
&& \forall \phi^h \in \mathbb V_k^m,\label{eq:direct-rk-normal}\\[2mm]
&(\overline J^{m,i})^{3-d}(\mathcal H^{m,i}\overline{\boldsymbol N}^{m,i},\boldsymbol\omega^h)_{\Gamma^m}
=\langle\nabla_{\Gamma^m}\boldsymbol X^{m,i},\nabla_{\Gamma^m}\boldsymbol\omega^h\rangle_{\Gamma^m},
&& \forall \boldsymbol\omega^h \in [\mathbb V_k^m]^d,\label{eq:direct-rk-curvature}
\end{align}
\end{subequations}
together with \eqref{eq:stage-values-rk}. For $d=3$, the factor $(\overline J^{m,i})^{3-d}$ equals one.

\begin{remark}[Midpoint rule]
\label{rem:linear-rk-polynomial}
For the midpoint rule, $\boldsymbol Y$ is linear in $\theta$, $\boldsymbol X^{m,1}=\frac12(\boldsymbol X^m+\boldsymbol X^{m+1})$, and \eqref{eq:equi-rk-normal-average} becomes
\begin{equation}\label{eq:midpoint-simpson-normal}
\overline{\boldsymbol N}^{m,1} =\begin{cases}
\boldsymbol N^h(\boldsymbol X^{m,1}),&d=2,\\[1mm]
\frac16\bigl(\boldsymbol N^h(\boldsymbol X^m) +4\boldsymbol N^h(\boldsymbol X^{m,1}) +\boldsymbol N^h(\boldsymbol X^{m+1})\bigr),&d=3.
\end{cases}
\end{equation}
For $k=1$, this is the time-weighted normal of \cite{BaoZhao2021}.
\end{remark}

\begin{theorem}[Volume preservation and energy dissipation]
\label{thm:direct-rk-structure}
Let $d\in\{2,3\}$, let $b_i>0$, and assume exact integration. Then the scheme \eqref{eq:direct-rk-scheme} satisfies
\begin{equation}\label{eq:direct-rk-volume}
|\Omega^{m+1}|=|\Omega^m|.
\end{equation}
If, in addition, the RK method is algebraically stable, then
\begin{equation}\label{eq:direct-rk-energy-bound}
|\Gamma^{m+1}|+\tau\sum_{i=1}^s b_i\|\nabla_{\Gamma^m}\mathcal H^{m,i}\|_{L^2(\Gamma^m)}^2\leq|\Gamma^m|.
\end{equation}
\end{theorem}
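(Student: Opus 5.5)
Volume: I would write the discrete enclosed volume of $\boldsymbol Y(\theta)(\Gamma^m)$ as a polynomial in $\theta$ and differentiate. By the transport identity used in \eqref{eq:unified-volume}, applied on the polygonal/isoparametric surface with exact integration,
\[
\frac{\mathrm d}{\mathrm d\theta}|\Omega(\boldsymbol Y(\theta))|=\bigl(\boldsymbol N^h(\boldsymbol Y(\theta))\cdot\partial_\theta\boldsymbol Y(\theta),1\bigr)_{\Gamma^m}.
\]
This can be checked by the divergence theorem, writing the volume as $\frac1d(\boldsymbol Y\cdot\boldsymbol N^h(\boldsymbol Y),1)_{\Gamma^m}$ and using that its first variation is $(\boldsymbol N^h(\boldsymbol Y)\cdot\delta\boldsymbol Y,1)$ for closed, consistently oriented surfaces. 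I would then insert $\partial_\theta\boldsymbol Y=\tau\sum_i\ell_i(\theta)\delta\boldsymbol X^{m,i}$ and integrate over $[0,1]$. Since $\delta\boldsymbol X^{m,i}$ is $\theta$-independent, the definition \eqref{eq:equi-rk-normal-average} gives
\[
|\Omega^{m+1}|-|\Omega^m|=\tau\sum_i b_i(\overline{\boldsymbol N}^{m,i}\cdot\delta\boldsymbol X^{m,i},1)_{\Gamma^m}.
\]
Each term vanishes by \eqref{eq:direct-rk-normal} with $\phi^h=1$, using $b_i>0$ and $\overline J^{m,i}>0$. This gives \eqref{eq:direct-rk-volume}.

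Energy: the main obstacle is that the curvature equation pairs $\overline{\boldsymbol N}^{m,i}$ with the Dirichlet energy of the stage value, not with the true first variation of area. I would therefore not differentiate $|\Gamma|$ directly. Instead I would use the Dirichlet energy $E(\boldsymbol Y):=\frac12\|\nabla_{\Gamma^m}\boldsymbol Y\|^2_{L^2(\Gamma^m)}$ in a weighted form. For $d=2$ one has $J^h\le\frac12(1+|\partial_{\boldsymbol\tau}\boldsymbol Y|^2)$ pointwise; for $d=3$ one has $J^h=|\partial_{\boldsymbol\tau_1}\boldsymbol Y\times\partial_{\boldsymbol\tau_2}\boldsymbol Y|\le\frac12|\nabla_{\Gamma^m}\boldsymbol Y|^2$. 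This yields $|\Gamma^{m+1}|\le E(\boldsymbol X^{m+1})$ for $d=3$, with equality $|\Gamma^m|=E(\boldsymbol X^m)$ since $\boldsymbol X^m=\operatorname{id}$. For $d=2$ I would use the scaled bound $|\boldsymbol Y(\Gamma^m)|\le\frac{1}{2\mu}\|\partial_{\boldsymbol\tau}\boldsymbol Y\|^2+\frac{\mu}{2}|\Gamma^m|$, with $\mu>0$ chosen so that it matches the factor $\overline J^{m,i}$. I expect the $d=2$ bookkeeping of the factor $(\overline J^{m,i})^{3-d}$ across stages to be the delicate point. I would try to handle it by working instead with the quantity $\overline J^h$, noting that $|\boldsymbol Y(\Gamma^m)|=\overline J^h|\Gamma^m|$ and that Cauchy--Schwarz gives $\overline J^h(\boldsymbol Y)^2|\Gamma^m|\le\|\partial_{\boldsymbol\tau}\boldsymbol Y\|^2$.

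The core step is the standard algebraic-stability identity for the quadratic functional $E$ along RK stages:
\[
E(\boldsymbol X^{m+1})-E(\boldsymbol X^m)=\tau\sum_i b_i\langle\nabla\boldsymbol X^{m,i},\nabla\delta\boldsymbol X^{m,i}\rangle-\frac{\tau^2}{2}\sum_{i,j}\mathsf M_{ij}\langle\nabla\delta\boldsymbol X^{m,i},\nabla\delta\boldsymbol X^{m,j}\rangle.
\]
This follows by expanding with \eqref{eq:stage-values-rk}. The last sum is nonnegative since $\mathsf M$ is positive semidefinite and the Gram-type form is applied to it. Next I test \eqref{eq:direct-rk-curvature} with $\boldsymbol\omega^h=\delta\boldsymbol X^{m,i}$ and \eqref{eq:direct-rk-normal} with $\phi^h=\mathcal H^{m,i}$. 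This gives $\langle\nabla\boldsymbol X^{m,i},\nabla\delta\boldsymbol X^{m,i}\rangle=-\|\nabla\mathcal H^{m,i}\|^2$. Combining this with $|\Gamma^{m+1}|\le E(\boldsymbol X^{m+1})$ and $E(\boldsymbol X^m)=|\Gamma^m|$ yields \eqref{eq:direct-rk-energy-bound} for $d=3$. For $d=2$ the same argument should work after absorbing the constant scaling $\overline J^{m,i}$ through the Young-type inequality above, so that the estimate closes with the stated constant.
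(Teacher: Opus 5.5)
Your proof is the paper's proof. For volume you use the transport identity along $\boldsymbol Y(\theta)$, the path average, and $\phi^h=1$. For energy you bound the area by the Dirichlet energy $E$, use the algebraic-stability identity for $E$ along the stages, and test with $\phi^h=\mathcal H^{m,i}$ and $\boldsymbol\omega^h=\delta\boldsymbol X^{m,i}$.

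The one point to correct is your treatment of $d=2$. There is no delicate bookkeeping of $(\overline J^{m,i})^{3-d}$. After testing, this factor multiplies the same quantity $(\mathcal H^{m,i}\overline{\boldsymbol N}^{m,i},\delta\boldsymbol X^{m,i})_{\Gamma^m}$ on the left of both equations, so it cancels. This gives $\langle\nabla_{\Gamma^m}\boldsymbol X^{m,i},\nabla_{\Gamma^m}\delta\boldsymbol X^{m,i}\rangle_{\Gamma^m}=-\|\nabla_{\Gamma^m}\mathcal H^{m,i}\|_{L^2(\Gamma^m)}^2$ for $d=2$ too, which you had already written down. The unscaled bound $J^h\le\frac12(1+|\partial_{\boldsymbol\tau}\boldsymbol Y|^2)$ together with $E(\boldsymbol X^m)=\frac12|\Gamma^m|$ then gives $|\Gamma^{m+1}|-|\Gamma^m|\le E(\boldsymbol X^{m+1})-E(\boldsymbol X^m)$, which closes the estimate exactly as for $d=3$.

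The scaled Young inequality you propose as a remedy would break the argument. For $\mu\neq1$ it yields
\[
|\Gamma^{m+1}|-|\Gamma^m|\le \mu^{-1}\bigl(E(\boldsymbol X^{m+1})-E(\boldsymbol X^m)\bigr)+\frac{(1-\mu)^2}{2\mu}\,|\Gamma^m|,
\]
whose strictly positive remainder nothing cancels. In addition, a single $\mu$ cannot match the stage-dependent factors $\overline J^{m,i}$. The Cauchy--Schwarz remark $\overline J^h(\boldsymbol Y)^2|\Gamma^m|\le\|\partial_{\boldsymbol\tau}\boldsymbol Y\|^2$ is likewise unnecessary, so drop the $\mu$-detour entirely.
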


\begin{proof}
\emph{Volume preservation.} The transport identity for $\Gamma(\theta):=\boldsymbol Y(\theta)(\Gamma^m)$ \cite{BarrettGarckeNuernberg2020}, together with \eqref{eq:equi-rk-reconstruction} and \eqref{eq:equi-rk-normal-average}, gives
\begin{equation}\label{eq:equi-rk-volume-increment}
|\Omega^{m+1}|-|\Omega^m|
=\int_0^1\bigl(\boldsymbol N^h(\boldsymbol Y(\theta))\cdot\partial_\theta\boldsymbol Y(\theta),1\bigr)_{\Gamma^m}\mathrm d\theta
=\tau\sum_{i=1}^s b_i\bigl(\overline{\boldsymbol N}^{m,i}\cdot\delta\boldsymbol X^{m,i},1\bigr)_{\Gamma^m}.
\end{equation}
Taking $\phi^h=1$ in \eqref{eq:direct-rk-normal} and using $\overline J^{m,i}>0$ shows that every term in the last sum vanishes. Hence \eqref{eq:direct-rk-volume} follows.

\emph{Energy dissipation.} For $d=2$, the elementary inequality $2a\leq a^2+1$ gives
$J^h(\boldsymbol X^{m+1})=|\nabla_{\Gamma^m}\boldsymbol X^{m+1}|
\leq\frac12(|\nabla_{\Gamma^m}\boldsymbol X^{m+1}|^2+1)$, while
$|\nabla_{\Gamma^m}\boldsymbol X^m|^2=1$. For $d=3$, \cite[Lemma~55]{BarrettGarckeNuernberg2020} gives
$J^h(\boldsymbol X^{m+1})\leq\frac12|\nabla_{\Gamma^m}\boldsymbol X^{m+1}|^2$, while
$|\nabla_{\Gamma^m}\boldsymbol X^m|^2=2$. Thus, in either case, integration over $\Gamma^m$ yields
\begin{align}
|\Gamma^{m+1}|-|\Gamma^m|
&\leq\frac12\left(\|\nabla_{\Gamma^m}\boldsymbol X^{m+1}\|_{L^2(\Gamma^m)}^2
-\|\nabla_{\Gamma^m}\boldsymbol X^m\|_{L^2(\Gamma^m)}^2\right)
\notag\\
&=\tau\sum_{i=1}^s b_i\langle\nabla_{\Gamma^m}\boldsymbol X^{m,i},\nabla_{\Gamma^m}\delta\boldsymbol X^{m,i}\rangle_{\Gamma^m}\notag\\
&-\frac{\tau^2}{2}\sum_{i,j=1}^s\mathsf M_{ij}\langle\nabla_{\Gamma^m}\delta\boldsymbol X^{m,i},\nabla_{\Gamma^m}\delta\boldsymbol X^{m,j}\rangle_{\Gamma^m}.
\label{eq:direct-rk-quadratic-identity}
\end{align}
The equality follows from the stage relations \eqref{eq:stage-values-rk} and the RK quadratic identity \cite[Section~IV.12]{HairerWanner1996}.
The double sum is nonnegative because $\mathsf M$ is positive semidefinite. Testing \eqref{eq:direct-rk-normal} with $\phi^h=\mathcal H^{m,i}$ and \eqref{eq:direct-rk-curvature} with $\boldsymbol\omega^h=\delta\boldsymbol X^{m,i}$ gives
\begin{equation}\label{eq:direct-rk-tested}
\langle\nabla_{\Gamma^m}\boldsymbol X^{m,i},\nabla_{\Gamma^m}\delta\boldsymbol X^{m,i}\rangle_{\Gamma^m}
=-\|\nabla_{\Gamma^m}\mathcal H^{m,i}\|_{L^2(\Gamma^m)}^2.
\end{equation}
Combining \eqref{eq:direct-rk-quadratic-identity} and \eqref{eq:direct-rk-tested} gives
\begin{equation*}
|\Gamma^{m+1}|-|\Gamma^m|
\leq-\tau\sum_{i=1}^s b_i\|\nabla_{\Gamma^m}\mathcal H^{m,i}\|_{L^2(\Gamma^m)}^2,
\end{equation*}
which is \eqref{eq:direct-rk-energy-bound}.
\end{proof}

\begin{remark}[Spatial quadrature]
\label{rem:direct-spatial-quadrature}
Exact volume preservation is retained provided that the quadrature rule integrates
$(\overline{\boldsymbol N}^{m,i}\cdot\delta\boldsymbol X^{m,i},1)_{\Gamma^m}$
exactly. Algebraic degrees of exactness $2k-1$ for $d=2$ and $3k-2$ for
$d=3$ are sufficient \cite{GarckeNuernbergPraetoriusZhang2025}. In our
computations, we use quadrature rules of algebraic degrees $3k-1$ and $4k-2$,
respectively, which ensure exact integration of all terms involving
$\overline{\boldsymbol N}^{m,i}$ in \eqref{eq:direct-rk-scheme}. For energy
dissipation, it suffices to use any positive-weight quadrature rule consistently,
with $|\Gamma^m|$ replaced by its corresponding quadrature value, since
\eqref{eq:direct-rk-quadratic-identity} remains valid in quadrature form.
\end{remark}
\begin{remark}[General Runge--Kutta methods]
\label{rem:general-rk}
The construction is not restricted to collocation methods. 
Let $(A,\boldsymbol b,\boldsymbol c)$ be an $s$-stage Runge--Kutta method with
$b_i>0$ that admits a $C^1$ reconstruction
$\boldsymbol Y(\theta) = \boldsymbol X^m +\tau\sum_{i=1}^s \alpha_i(\theta)\delta\boldsymbol X^{m,i},$ satisfying
$ \alpha_i(0)=0,\, \alpha_i(c_j)=a_{ji},\,
\alpha_i(1)=b_i$.
Defining
$$
\overline{\boldsymbol N}^{m,i}
:=
\frac1{b_i}\int_0^1
\alpha_i'(\theta)\boldsymbol N^h(\boldsymbol Y(\theta))\,\mathrm d\theta
$$
gives the same transport identity and hence exact volume preservation. If, in addition, the Runge--Kutta method is algebraically stable, the same argument also yields energy dissipation. The collocation construction is the special case
$\alpha_i(\theta)=\int_0^\theta\ell_i(\xi)\,\mathrm d\xi$.
\end{remark}
 \subsection{Structure-preserving discretization of the equidistribution formulation}
\label{sec:equi-rk}

We discretize \eqref{eq:equi-sd-weak} with the spaces, collocation polynomial and path averaged normal of \cref{sec:rk-geometry}. For an orientation-preserving, elementwise regular $\boldsymbol Y\in[\mathbb V_k^m]^d$, we further define elementwise the discrete counterparts of \eqref{eq:relative-metric} and \eqref{eq:equi-C},
\begin{equation}
\label{eq:equi-rk-GC}
\begin{aligned}
\boldsymbol G^h(\boldsymbol Y)&:=(\nabla_{\Gamma^m}\boldsymbol Y)^T\nabla_{\Gamma^m}\boldsymbol Y+\boldsymbol n^m\otimes\boldsymbol n^m,\\
\boldsymbol C^h(\boldsymbol Y)&:=\operatorname{cof}\boldsymbol G^h(\boldsymbol Y)=[J^h(\boldsymbol Y)]^2[\boldsymbol G^h(\boldsymbol Y)]^{-1},
\end{aligned}
\end{equation}
where $\boldsymbol n^m$ is the elementwise unit normal of $\Gamma^m$ and $[J^h(\boldsymbol Y)]^2=\det\boldsymbol G^h(\boldsymbol Y)$. Both matrices are symmetric positive definite. In place of $\nabla_{\Gamma^m}\boldsymbol X^{m,i}$, the curvature equation uses the path-averaged geometric discrete gradient
\begin{equation}
\label{eq:equi-rk-T}
\overline{\boldsymbol T}^{m,i}:=\frac1{b_i}\int_0^1\ell_i(\theta)\,(\nabla_{\Gamma^m}\boldsymbol Y(\theta))\,\boldsymbol C^h(\boldsymbol Y(\theta))\,\mathrm d\theta.
\end{equation}
With $\overline J^{m,i}:=\overline J^h(\boldsymbol X^{m,i})$ and $\boldsymbol C^{m,i}:=\boldsymbol C^h(\boldsymbol X^{m,i})$, the scheme reads: for $i=1,\ldots,s$, find $(\delta\boldsymbol X^{m,i},\mathcal H^{m,i})\in[\mathbb V_k^m]^d\times\mathbb V_k^m$, such that
\begin{subequations}
\label{eq:equi-rk-scheme}
\begin{align}
&\overline J^{m,i}(\overline{\boldsymbol N}^{m,i}\cdot\delta\boldsymbol X^{m,i},\phi^h)_{\Gamma^m}
=-(\boldsymbol C^{m,i}\nabla_{\Gamma^m}\mathcal H^{m,i},\nabla_{\Gamma^m}\phi^h)_{\Gamma^m},
&&\forall\phi^h\in\mathbb V_k^m,\label{eq:equi-rk-normal}\\[2mm]
&\overline J^{m,i}(\mathcal H^{m,i}\overline{\boldsymbol N}^{m,i},\boldsymbol\omega^h)_{\Gamma^m}
=\langle\overline{\boldsymbol T}^{m,i},\nabla_{\Gamma^m}\boldsymbol\omega^h\rangle_{\Gamma^m},
&&\forall\boldsymbol\omega^h\in[\mathbb V_k^m]^d,\label{eq:equi-rk-curvature}
\end{align}
\end{subequations}
together with \eqref{eq:stage-values-rk}.

Since $(\nabla_{\Gamma^m}\boldsymbol Y)\boldsymbol n^m=\boldsymbol0$, only the tangential block of $\boldsymbol C^h(\boldsymbol Y)$ enters \eqref{eq:equi-rk-T}; it has degree $0$ ($d=2$) or $2s$ ($d=3$) in $\theta$. Hence the integrand in \eqref{eq:equi-rk-T} has degree $2s-1$ or $4s-1$, and $s$ or $2s$ Gauss--Legendre points evaluate it exactly.

\begin{remark}[Equivalence in two dimensions with Gauss collocation]
For $d=2$, $\boldsymbol C^h(\boldsymbol Y)$ is the identity on tangent
vectors, and hence
$\boldsymbol C^{m,i}\nabla_{\Gamma^m}\mathcal H^{m,i}
=\nabla_{\Gamma^m}\mathcal H^{m,i}$. For Gauss collocation,
$\overline{\boldsymbol T}^{m,i}
=\nabla_{\Gamma^m}\boldsymbol X^{m,i}$, so
\eqref{eq:equi-rk-scheme} coincides with \eqref{eq:direct-rk-scheme}.
\end{remark}

\begin{theorem}[Volume preservation and energy dissipation]
\label{thm:equi-rk-structure}
Let $d\in\{2,3\}$, let $b_i>0$, and assume exact integration. Then the scheme \eqref{eq:equi-rk-scheme} satisfies $|\Omega^{m+1}|=|\Omega^m|$ and
\begin{equation}
\label{eq:equi-rk-energy}
|\Gamma^{m+1}|+\tau\sum_{i=1}^s b_i(\boldsymbol C^{m,i}\nabla_{\Gamma^m}\mathcal H^{m,i},\nabla_{\Gamma^m}\mathcal H^{m,i})_{\Gamma^m}\leq|\Gamma^m|.
\end{equation}
\end{theorem}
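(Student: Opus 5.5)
Volume preservation is proved exactly as in \cref{thm:direct-rk-structure}. The identity \eqref{eq:equi-rk-volume-increment} depends only on the collocation polynomial \eqref{eq:equi-rk-reconstruction} and on the path average \eqref{eq:equi-rk-normal-average}, both of which are shared by the two schemes. It therefore remains valid and expresses $|\Omega^{m+1}|-|\Omega^m|$ as $\tau\sum_i b_i(\overline{\boldsymbol N}^{m,i}\cdot\delta\boldsymbol X^{m,i},1)_{\Gamma^m}$. Taking $\phi^h=1$ in \eqref{eq:equi-rk-normal} makes the right-hand side vanish, and since $\overline J^{m,i}>0$ every summand is zero.

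For the energy, I will avoid any quadratic RK identity and instead write the area change as an exact path integral, exploiting the fact that $\boldsymbol C^h=\operatorname{cof}\boldsymbol G^h$ is the derivative of the Jacobian. The key pointwise identity, valid elementwise for a regular $\boldsymbol Y$ and a variation $\dot{\boldsymbol Y}$, is
\begin{equation*}
\frac{\mathrm d}{\mathrm d\theta}J^h(\boldsymbol Y(\theta))=[J^h]^{-1}\,(\nabla_{\Gamma^m}\boldsymbol Y)\boldsymbol C^h(\boldsymbol Y):\nabla_{\Gamma^m}\partial_\theta\boldsymbol Y .
\end{equation*}
It follows from Jacobi's formula $\partial\det\boldsymbol G=\operatorname{cof}\boldsymbol G:\partial\boldsymbol G$ together with $\partial\boldsymbol G^h=2\operatorname{sym}((\nabla\boldsymbol Y)^T\nabla\dot{\boldsymbol Y})$, since $\boldsymbol n^m\otimes\boldsymbol n^m$ is fixed. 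Here lies the first obstacle: there is an extra factor $[J^h]^{-1}$, so the integrand is not exactly $\overline{\boldsymbol T}$.

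I would handle this as follows. Because the scheme multiplies by $\overline J^{m,i}$, the intended target is most likely the quantity $\frac12(J^h)^2$ rather than $J^h$. Indeed, $\frac{\mathrm d}{\mathrm d\theta}\tfrac12[J^h]^2=(\nabla\boldsymbol Y)\boldsymbol C^h:\nabla\partial_\theta\boldsymbol Y$ holds exactly. Substituting $\partial_\theta\boldsymbol Y=\tau\sum_i\ell_i\delta\boldsymbol X^{m,i}$ and integrating over $\theta$ gives
\begin{equation*}
\tfrac12\|J^h(\boldsymbol X^{m+1})\|_{L^2(\Gamma^m)}^2-\tfrac12|\Gamma^m|=\tau\sum_i b_i\langle\overline{\boldsymbol T}^{m,i},\nabla_{\Gamma^m}\delta\boldsymbol X^{m,i}\rangle_{\Gamma^m}.
\end{equation*}
This is exactly why the path-averaged discrete gradient \eqref{eq:equi-rk-T} is the right object, and it explains why no algebraic stability is required. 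Next I relate the $L^2$ norm of $J^h$ to the area. By Cauchy--Schwarz, $|\Gamma^{m+1}|^2=(J^h(\boldsymbol X^{m+1}),1)^2\le|\Gamma^m|\,\|J^h(\boldsymbol X^{m+1})\|^2$. Combined with $ab\le\frac12(a^2+b^2)$, this yields
\begin{equation*}
|\Gamma^{m+1}|-|\Gamma^m|\le \frac{1}{|\Gamma^m|}\Bigl(\tfrac12\|J^h(\boldsymbol X^{m+1})\|^2-\tfrac12|\Gamma^m|\Bigr).
\end{equation*}

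Finally, I test \eqref{eq:equi-rk-curvature} with $\boldsymbol\omega^h=\delta\boldsymbol X^{m,i}$ and \eqref{eq:equi-rk-normal} with $\phi^h=\mathcal H^{m,i}$. The left-hand sides coincide, which gives $\langle\overline{\boldsymbol T}^{m,i},\nabla\delta\boldsymbol X^{m,i}\rangle=-(\boldsymbol C^{m,i}\nabla\mathcal H^{m,i},\nabla\mathcal H^{m,i})$. Summing with weights $b_i>0$ shows that the area decrease is bounded below by $|\Gamma^m|^{-1}\tau\sum_i b_i(\boldsymbol C^{m,i}\nabla\mathcal H^{m,i},\nabla\mathcal H^{m,i})$. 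The main obstacle is the normalisation factor $|\Gamma^m|^{-1}$ (equivalently $\overline J$). I would first try to absorb it by checking whether the intended energy identity is stated in terms of $\overline J$-scaled quantities, or whether a rescaled reference measure makes $|\Gamma^m|=1$. If that fails, I would replace the Cauchy--Schwarz step with the sharper pointwise inequality $J\le\frac{1}{2\overline J}(J^2+\overline J^2)$, using $\overline J^h(\boldsymbol X^m)=1$, to match the constant in \eqref{eq:equi-rk-energy}.
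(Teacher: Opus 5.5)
You follow the paper's strategy, and most of it is right. Volume preservation comes from $\phi^h=1$ and \eqref{eq:equi-rk-volume-increment}. For the energy, you apply Jacobi's formula to $\det\boldsymbol G^h=(J^h)^2$ along the collocation polynomial, which turns $\tfrac12\int_{\Gamma^m}\bigl([J^h(\boldsymbol X^{m+1})]^2-1\bigr)\,\mathrm d\Gamma^m$ exactly into $\tau\sum_i b_i\langle\overline{\boldsymbol T}^{m,i},\nabla_{\Gamma^m}\delta\boldsymbol X^{m,i}\rangle_{\Gamma^m}$. You then use the tested identity. The error is in the step relating $\|J^h(\boldsymbol X^{m+1})\|_{L^2(\Gamma^m)}^2$ to $|\Gamma^{m+1}|$: your displayed inequality with the prefactor $|\Gamma^m|^{-1}$ is false. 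It is dimensionally inconsistent: the left side is an area, the right side is dimensionless. For $J^h(\boldsymbol X^{m+1})\equiv c>1$ it reads $(c-1)|\Gamma^m|\le\frac12(c^2-1)$, which fails once $|\Gamma^m|>(c+1)/2$. What Cauchy--Schwarz and $ab\le\frac12(a^2+b^2)$ actually give is
\[
|\Gamma^{m+1}|\le|\Gamma^m|^{1/2}\,\|J^h(\boldsymbol X^{m+1})\|_{L^2(\Gamma^m)}\le\tfrac12\bigl(|\Gamma^m|+\|J^h(\boldsymbol X^{m+1})\|_{L^2(\Gamma^m)}^2\bigr),
\]
that is, $|\Gamma^{m+1}|-|\Gamma^m|\le\frac12\|J^h(\boldsymbol X^{m+1})\|_{L^2(\Gamma^m)}^2-\frac12|\Gamma^m|$, with no normalisation factor. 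You apparently bounded the ratio $|\Gamma^{m+1}|/|\Gamma^m|-1$ and then read it as the difference. So your ``main obstacle'' does not exist. Your stated conclusion, an area decrease of only $|\Gamma^m|^{-1}\tau\sum_i b_i(\boldsymbol C^{m,i}\nabla_{\Gamma^m}\mathcal H^{m,i},\nabla_{\Gamma^m}\mathcal H^{m,i})_{\Gamma^m}$, is not the theorem, and it rests on a false inequality.

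Drop the rescaling idea, since it only tries to repair that false inequality. Your fallback with $\overline J=\overline J^h(\boldsymbol X^m)=1$ is the pointwise bound $J-1\le\frac12(J^2-1)$, which is exactly the paper's step. Integrated over $\Gamma^m$, it gives the same bound as the corrected Cauchy--Schwarz argument; it is not sharper. With this correction, your path-integral identity and the tested identity give \eqref{eq:equi-rk-energy} immediately, and the proof then coincides with the paper's.
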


\begin{proof}
\emph{Volume preservation.} Taking $\phi^h=1$ in \eqref{eq:equi-rk-normal}, we can derive $(\overline{\boldsymbol N}^{m,i}\cdot\,\delta\boldsymbol X^{m,i},\,1)_{\Gamma^m}\,=0$, and \eqref{eq:equi-rk-volume-increment} applies.

\emph{Energy dissipation.} Since $a-1\leq\frac12(a^2-1)$ and $\det \boldsymbol{G}^h(\boldsymbol{X}^{m}) = 1$,
\begin{align}
|\Gamma^{m+1}|-|\Gamma^m| &=\int_{\Gamma^m}\bigl[J^h(\boldsymbol X^{m+1})-1\bigr]\,\mathrm d\Gamma^m
\leq\frac12\int_{\Gamma^m}\bigl([J^h(\boldsymbol X^{m+1})]^2-1\bigr)\,\mathrm d\Gamma^m\nonumber\\
&= \frac12\int_{\Gamma^m}\bigl(\det\boldsymbol G^h(\boldsymbol X^{m+1})-\det\boldsymbol G^h(\boldsymbol X^m)\bigr)\,\mathrm d\Gamma^m.
\end{align}
By Jacobi's formula \cite[Lemma~5.3]{EckGarckeKnabner2017}, 
\begin{equation}\label{eq:jacobi-formula}
\partial_\theta\det\boldsymbol G^h(\boldsymbol Y)=\boldsymbol C^h(\boldsymbol Y):\partial_\theta\boldsymbol G^h(\boldsymbol Y)=2(\nabla_{\Gamma^m}\boldsymbol Y)\boldsymbol C^h(\boldsymbol Y):\nabla_{\Gamma^m}\partial_\theta\boldsymbol Y,
\end{equation}
where the last equality uses the symmetry of $\boldsymbol C^h$. Integrating in $\theta$ and inserting \eqref{eq:equi-rk-reconstruction} and \eqref{eq:equi-rk-T},
\begin{equation}
\label{eq:equi-rk-area-increment}
|\Gamma^{m+1}|-|\Gamma^m|\leq\int_0^1\bigl\langle(\nabla_{\Gamma^m}\boldsymbol Y)\boldsymbol C^h(\boldsymbol Y),\nabla_{\Gamma^m}\partial_\theta\boldsymbol Y\bigr\rangle_{\Gamma^m}\mathrm d\theta
=\tau\sum_{i=1}^s b_i\langle\overline{\boldsymbol T}^{m,i},\nabla_{\Gamma^m}\delta\boldsymbol X^{m,i}\rangle_{\Gamma^m}.
\end{equation}
Taking $\phi^h=\mathcal H^{m,i}$ in \eqref{eq:equi-rk-normal} and $\boldsymbol\omega^h=\delta\boldsymbol X^{m,i}$ in \eqref{eq:equi-rk-curvature} gives $\langle\overline{\boldsymbol T}^{m,i},\nabla_{\Gamma^m}\delta\boldsymbol X^{m,i}\rangle_{\Gamma^m}=-(\boldsymbol C^{m,i}\nabla_{\Gamma^m}\mathcal H^{m,i},\nabla_{\Gamma^m}\mathcal H^{m,i})_{\Gamma^m}$, which proves \eqref{eq:equi-rk-energy}.
\end{proof}

In contrast to \cref{thm:direct-rk-structure}, algebraic stability is not required: the path average \eqref{eq:equi-rk-T} reproduces the exact increment of $\det\boldsymbol G^h$ along the collocation polynomial. The extension to the general Runge--Kutta methods of Remark~\ref{rem:general-rk} follows by replacing $\ell_i$ with $\alpha_i'$ in \eqref{eq:equi-rk-T}; the same energy estimate remains valid without algebraic stability. The spatial quadrature statements of Remark~\ref{rem:direct-spatial-quadrature} also carry over, since \eqref{eq:jacobi-formula} holds pointwise.

\subsection{Weakly nonlinear schemes with predicted coefficients}
\label{sec:direct-rk-predictor}
 
We reduce the nonlinearity of \eqref{eq:direct-rk-scheme} and
\eqref{eq:equi-rk-scheme} by freezing selected coefficients at predicted
stage surfaces. Let $\widetilde{\boldsymbol X}^{m,i}\in[\mathbb V_k^m]^d$,
$i=1,\ldots,s$, be regular, orientation-preserving predictions of
$\boldsymbol X^{m,i}$ obtained from a lower-order method or by extrapolation,
and set $\widetilde J^{m,i}:=\overline J^h(\widetilde{\boldsymbol X}^{m,i})>0$
and $\widetilde{\boldsymbol C}^{m,i}:=\boldsymbol C^h(\widetilde{\boldsymbol X}^{m,i})$.

The predicted-coefficient variant of \eqref{eq:direct-rk-scheme} is: find $(\delta\boldsymbol X^{m,i},\mathcal H^{m,i})\in[\mathbb V_k^m]^d\times\mathbb V_k^m$, $i=1,\ldots,s$, such that
\begin{subequations}\label{eq:direct-rk-predicted-scheme}
\begin{align}
&(\widetilde J^{m,i})^{3-d}(\overline{\boldsymbol N}^{m,i}\cdot\delta\boldsymbol X^{m,i},\phi^h)_{\Gamma^m} =-(\nabla_{\Gamma^m}\mathcal H^{m,i},\nabla_{\Gamma^m}\phi^h)_{\Gamma^m},&& \forall \phi^h \in \mathbb V_k^m,\label{eq:direct-rk-predicted-normal}\\[2mm]
&(\widetilde J^{m,i})^{3-d}(\mathcal H^{m,i}\overline{\boldsymbol N}^{m,i},\boldsymbol\omega^h)_{\Gamma^m} =\langle\nabla_{\Gamma^m}\boldsymbol X^{m,i},\nabla_{\Gamma^m}\boldsymbol\omega^h\rangle_{\Gamma^m},&& \forall \boldsymbol\omega^h \in [\mathbb V_k^m]^d, \label{eq:direct-rk-predicted-curvature}
\end{align}
\end{subequations}
together with \eqref{eq:stage-values-rk}. Only the Jacobian coefficient is frozen, while the path average $\overline{\boldsymbol N}^{m,i}$ is retained; for $d=3$, \eqref{eq:direct-rk-predicted-scheme} coincides with \eqref{eq:direct-rk-scheme}.

\begin{theorem}[Structure preservation of predicted conformal scheme]
\label{thm:predicted-conformal}
Under the assumptions of \cref{thm:direct-rk-structure}, the scheme \eqref{eq:direct-rk-predicted-scheme} satisfies $|\Omega^{m+1}|=|\Omega^m|$ and
\begin{equation}
\label{eq:direct-rk-predicted-energy}
|\Gamma^{m+1}|+
\tau\sum_{i=1}^s b_i
\|\nabla_{\Gamma^m}\mathcal H^{m,i}\|_{L^2(\Gamma^m)}^2
\leq |\Gamma^m|.
\end{equation}
\end{theorem}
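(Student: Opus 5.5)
The plan is to rerun the proof of \cref{thm:direct-rk-structure} and check that each step is unaffected when $\overline J^{m,i}$ is replaced by the frozen constant $\widetilde J^{m,i}$. The only properties of $\overline J^{m,i}$ that proof used are that it is a positive scalar, constant in space, and that it appears with the same power $3-d$ on the left-hand sides of both \eqref{eq:direct-rk-normal} and \eqref{eq:direct-rk-curvature}. All three hold for $\widetilde J^{m,i}>0$ in \eqref{eq:direct-rk-predicted-scheme}, by the assumption that the predictions $\widetilde{\boldsymbol X}^{m,i}$ are regular and orientation-preserving. For $d=3$ the factor is identically one, so the scheme coincides with \eqref{eq:direct-rk-scheme} and nothing needs to be shown; the substantive case is $d=2$.

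\emph{Volume preservation.} The transport identity \eqref{eq:equi-rk-volume-increment} depends only on the collocation polynomial \eqref{eq:equi-rk-reconstruction} and on the definition \eqref{eq:equi-rk-normal-average} of $\overline{\boldsymbol N}^{m,i}$. Both are retained unchanged in the predicted scheme, so
\[
|\Omega^{m+1}|-|\Omega^m|=\tau\sum_i b_i(\overline{\boldsymbol N}^{m,i}\cdot\delta\boldsymbol X^{m,i},1)_{\Gamma^m}.
\]
Testing \eqref{eq:direct-rk-predicted-normal} with $\phi^h=1$ makes the right-hand side vanish, which gives $(\widetilde J^{m,i})^{3-d}(\overline{\boldsymbol N}^{m,i}\cdot\delta\boldsymbol X^{m,i},1)_{\Gamma^m}=0$. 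Dividing by the positive constant $(\widetilde J^{m,i})^{3-d}$ shows that each term in the sum is zero.

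\emph{Energy dissipation.} The inequality \eqref{eq:direct-rk-quadratic-identity} is purely geometric: it uses the pointwise bound on $J^h(\boldsymbol X^{m+1})$ and the RK quadratic identity, and never refers to the equations of the scheme. Since $\mathsf M$ is positive semidefinite by algebraic stability, it yields
\[
|\Gamma^{m+1}|-|\Gamma^m|\le\tau\sum_i b_i\langle\nabla_{\Gamma^m}\boldsymbol X^{m,i},\nabla_{\Gamma^m}\delta\boldsymbol X^{m,i}\rangle_{\Gamma^m}.
\]
The remaining step is the analogue of \eqref{eq:direct-rk-tested}. Test \eqref{eq:direct-rk-predicted-normal} with $\phi^h=\mathcal H^{m,i}$ and \eqref{eq:direct-rk-predicted-curvature} with $\boldsymbol\omega^h=\delta\boldsymbol X^{m,i}$. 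The two left-hand sides are then the same quantity $(\widetilde J^{m,i})^{3-d}(\mathcal H^{m,i}\overline{\boldsymbol N}^{m,i},\delta\boldsymbol X^{m,i})_{\Gamma^m}$, because the scalar $\mathcal H^{m,i}$ can be moved across the dot product with $\overline{\boldsymbol N}^{m,i}\cdot\delta\boldsymbol X^{m,i}$. Equating the right-hand sides gives
\[
\langle\nabla_{\Gamma^m}\boldsymbol X^{m,i},\nabla_{\Gamma^m}\delta\boldsymbol X^{m,i}\rangle_{\Gamma^m}=-\|\nabla_{\Gamma^m}\mathcal H^{m,i}\|^2_{L^2(\Gamma^m)}.
\]
Substituting this into the previous inequality yields \eqref{eq:direct-rk-predicted-energy}.

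I expect no real obstacle. The one point to verify carefully is that the frozen coefficient enters both stage equations with the identical power and the identical stage index $i$, so that it cancels exactly when the two tested equations are combined. This holds by construction of \eqref{eq:direct-rk-predicted-scheme}. If the normal and curvature equations had used different coefficients, a mismatch factor $\widetilde J/\overline J$ would appear and would destroy the sign of the dissipation term.
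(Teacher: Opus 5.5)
Your proof is correct and follows the paper's argument. You test \eqref{eq:direct-rk-predicted-normal} with $\phi^h=1$ and use the transport identity \eqref{eq:equi-rk-volume-increment} for volume. You then test with $\phi^h=\mathcal H^{m,i}$ and $\boldsymbol\omega^h=\delta\boldsymbol X^{m,i}$ so that the common factor $(\widetilde J^{m,i})^{3-d}$ cancels, and conclude with \eqref{eq:direct-rk-quadratic-identity} and the positive semidefiniteness of $\mathsf M$.

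One correction to your closing aside: $\overline J^{m,i}$ and $\widetilde J^{m,i}$ are both positive spatial constants. Using different ones in the two equations would therefore only rescale the dissipation term by the positive factor $(\widetilde J^{m,i}/\overline J^{m,i})^{\pm(3-d)}$, not destroy its sign. What must genuinely coincide in both equations is the vector $\overline{\boldsymbol N}^{m,i}$.
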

 
\begin{proof}
Since $\widetilde J^{m,i}>0$, taking $\phi^h=1$ in \eqref{eq:direct-rk-predicted-normal} gives $(\overline{\boldsymbol N}^{m,i}\cdot\delta\boldsymbol X^{m,i},1)_{\Gamma^m}=0$. Volume preservation then follows directly from \eqref{eq:equi-rk-volume-increment}. Taking $\phi^h=\mathcal H^{m,i}$ in \eqref{eq:direct-rk-predicted-normal} and $\boldsymbol\omega^h=\delta\boldsymbol X^{m,i}$ in \eqref{eq:direct-rk-predicted-curvature}, the common factor $(\widetilde J^{m,i})^{3-d}$ cancels and gives
\begin{equation*}
\langle\nabla_{\Gamma^m}\boldsymbol X^{m,i},
\nabla_{\Gamma^m}\delta\boldsymbol X^{m,i}\rangle_{\Gamma^m}
=-\|\nabla_{\Gamma^m}\mathcal H^{m,i}\|_{L^2(\Gamma^m)}^2.
\end{equation*}
The energy estimate follows from \eqref{eq:direct-rk-quadratic-identity} and the positive semidefiniteness of the algebraic-stability matrix $\mathsf M$.
\end{proof}
 
\begin{remark}[Linearized scheme]
\label{rem:sd-linearized-variant}
Replacing, in addition, $\overline{\boldsymbol N}^{m,i}$ by $\widetilde{\boldsymbol N}^{m,i}:=\boldsymbol N^h(\widetilde{\boldsymbol X}^{m,i})$ in both equations of \eqref{eq:direct-rk-predicted-scheme} yields a linear stage system. Since the same vector appears in both equations, \eqref{eq:direct-rk-tested} and hence \eqref{eq:direct-rk-energy-bound} still hold, whereas exact volume preservation is generally lost.
\end{remark}

The predicted-coefficient variant of \eqref{eq:equi-rk-scheme} is: find $(\delta\boldsymbol X^{m,i},\mathcal H^{m,i})\in[\mathbb V_k^m]^d\times\mathbb V_k^m$, $i=1,\ldots,s$, such that
\begin{subequations}\label{eq:equi-rk-predicted-scheme}
\begin{align}
&\widetilde J^{m,i}(\overline{\boldsymbol N}^{m,i}\cdot\delta\boldsymbol X^{m,i},\phi^h)_{\Gamma^m} =-(\widetilde{\boldsymbol{C}}^{m,i}\nabla_{\Gamma^m}\mathcal H^{m,i},\nabla_{\Gamma^m}\phi^h)_{\Gamma^m},&& \forall \phi^h \in \mathbb V_k^m,\label{eq:equi-rk-predicted-normal}\\[2mm]
&\widetilde J^{m,i}(\mathcal H^{m,i}\overline{\boldsymbol N}^{m,i},\boldsymbol\omega^h)_{\Gamma^m} =\langle\overline{\boldsymbol{T}}^{m,i},\nabla_{\Gamma^m}\boldsymbol\omega^h\rangle_{\Gamma^m},&& \forall \boldsymbol\omega^h \in [\mathbb V_k^m]^d, \label{eq:equi-rk-predicted-curvature}
\end{align}
\end{subequations}
together with \eqref{eq:stage-values-rk}. The Jacobian coefficient and
cofactor matrix are frozen, so the remaining nonlinearity is confined to the
path averages $\overline{\boldsymbol N}^{m,i}$ and
$\overline{\boldsymbol T}^{m,i}$.

\begin{theorem}[Structure preservation of predicted equidistribution scheme]
\label{thm:predicted-equi}
Under the assumptions of \cref{thm:equi-rk-structure}, the scheme \eqref{eq:equi-rk-predicted-scheme} satisfies $|\Omega^{m+1}|=|\Omega^m|$ and
\begin{equation}
\label{eq:equi-rk-predicted-energy}
|\Gamma^{m+1}|+
\tau\sum_{i=1}^s b_i
(\widetilde{\boldsymbol C}^{m,i}\nabla_{\Gamma^m}\mathcal H^{m,i},
\nabla_{\Gamma^m}\mathcal H^{m,i})_{\Gamma^m}
\leq |\Gamma^m|.
\end{equation}
\end{theorem}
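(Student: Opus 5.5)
The argument will mirror the proof of \cref{thm:equi-rk-structure}, checking at each step that freezing $\widetilde J^{m,i}$ and $\widetilde{\boldsymbol C}^{m,i}$ does not break the cancellations. The key point is that the path averages $\overline{\boldsymbol N}^{m,i}$ and $\overline{\boldsymbol T}^{m,i}$ are retained exactly as in \eqref{eq:equi-rk-scheme}. The two identities that drive preservation and dissipation, namely the volume transport identity \eqref{eq:equi-rk-volume-increment} and the area increment bound \eqref{eq:equi-rk-area-increment}, are purely kinematic statements about the collocation polynomial $\boldsymbol Y(\theta)$ and involve only these averages. They therefore hold verbatim for the predicted scheme.

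\emph{Volume.} Take $\phi^h=1$ in \eqref{eq:equi-rk-predicted-normal}. The right-hand side vanishes because $\nabla_{\Gamma^m}1=\boldsymbol 0$, regardless of the frozen matrix $\widetilde{\boldsymbol C}^{m,i}$. Since $\widetilde J^{m,i}>0$ by the regularity and orientation assumptions on the predictions, this gives $(\overline{\boldsymbol N}^{m,i}\cdot\delta\boldsymbol X^{m,i},1)_{\Gamma^m}=0$ for every $i$. Inserting this into \eqref{eq:equi-rk-volume-increment}, which requires only $b_i>0$, exact integration, and the definition \eqref{eq:equi-rk-normal-average}, yields $|\Omega^{m+1}|=|\Omega^m|$.

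\emph{Energy.} First repeat the chain of the previous proof: the inequality $a-1\le\frac12(a^2-1)$, $\det\boldsymbol G^h(\boldsymbol X^m)=1$, Jacobi's formula \eqref{eq:jacobi-formula} along $\boldsymbol Y(\theta)$, and the reconstruction \eqref{eq:equi-rk-reconstruction}. This gives
\begin{equation*}
|\Gamma^{m+1}|-|\Gamma^m|\le\tau\sum_{i=1}^s b_i\langle\overline{\boldsymbol T}^{m,i},\nabla_{\Gamma^m}\delta\boldsymbol X^{m,i}\rangle_{\Gamma^m}.
\end{equation*}
None of these steps involves the stage coefficients. Next, test \eqref{eq:equi-rk-predicted-curvature} with $\boldsymbol\omega^h=\delta\boldsymbol X^{m,i}$ and \eqref{eq:equi-rk-predicted-normal} with $\phi^h=\mathcal H^{m,i}$. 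The same scalar $\widetilde J^{m,i}$ and the same vector $\overline{\boldsymbol N}^{m,i}$ multiply both left-hand sides, so the two left-hand sides coincide, giving
\begin{equation*}
\langle\overline{\boldsymbol T}^{m,i},\nabla_{\Gamma^m}\delta\boldsymbol X^{m,i}\rangle_{\Gamma^m}=-(\widetilde{\boldsymbol C}^{m,i}\nabla_{\Gamma^m}\mathcal H^{m,i},\nabla_{\Gamma^m}\mathcal H^{m,i})_{\Gamma^m}.
\end{equation*}
Substituting this yields \eqref{eq:equi-rk-predicted-energy}. The dissipation term is nonnegative because $\widetilde{\boldsymbol C}^{m,i}=\boldsymbol C^h(\widetilde{\boldsymbol X}^{m,i})$ is symmetric positive definite, as a cofactor of the SPD matrix $\boldsymbol G^h$ of a regular surface.

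The only step needing care is the matching of factors in this testing argument. In the curvature equation, the non-frozen quantity $\overline{\boldsymbol T}^{m,i}$ must stay exactly the path average \eqref{eq:equi-rk-T}, since that is what reproduces the increment of $\det\boldsymbol G^h$. The frozen quantities $\widetilde J^{m,i}$ and $\widetilde{\boldsymbol C}^{m,i}$, by contrast, only need to be consistent between the two equations and positive. Since the scheme is built that way, no algebraic stability is needed, just as in \cref{thm:equi-rk-structure}.
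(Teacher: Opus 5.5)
Your proposal is correct and follows the paper's proof essentially step for step. Volume preservation comes from $\phi^h=1$ in \eqref{eq:equi-rk-predicted-normal} with $\widetilde J^{m,i}>0$ and the transport identity \eqref{eq:equi-rk-volume-increment}, and the energy estimate comes from combining the area-increment bound \eqref{eq:equi-rk-area-increment} with the tests $\phi^h=\mathcal H^{m,i}$, $\boldsymbol\omega^h=\delta\boldsymbol X^{m,i}$; your added observation that the frozen $\widetilde J^{m,i}$, $\widetilde{\boldsymbol C}^{m,i}$ need only be consistent and positive, while $\overline{\boldsymbol T}^{m,i}$ must remain the exact path average, correctly explains why no algebraic stability is needed.
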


\begin{proof}
Since $\widetilde J^{m,i}>0$, taking $\phi^h=1$ in \eqref{eq:equi-rk-predicted-normal} gives $(\overline{\boldsymbol N}^{m,i}\cdot\delta\boldsymbol X^{m,i},1)_{\Gamma^m}=0$, and \eqref{eq:equi-rk-volume-increment} yields volume preservation. For the energy estimate, taking $\phi^h=\mathcal H^{m,i}$ in \eqref{eq:equi-rk-predicted-normal} and $\boldsymbol\omega^h=\delta\boldsymbol X^{m,i}$ in \eqref{eq:equi-rk-predicted-curvature} gives
\begin{equation*}
\langle\overline{\boldsymbol T}^{m,i},
\nabla_{\Gamma^m}\delta\boldsymbol X^{m,i}\rangle_{\Gamma^m}
=-(\widetilde{\boldsymbol C}^{m,i}\nabla_{\Gamma^m}\mathcal H^{m,i},
\nabla_{\Gamma^m}\mathcal H^{m,i})_{\Gamma^m}.
\end{equation*}
Combining \eqref{eq:equi-rk-area-increment} with the preceding identity
yields \eqref{eq:equi-rk-predicted-energy}.
\end{proof}

\paragraph{Stage predictors}
For the midpoint rule, $\widetilde{\boldsymbol X}^{m,1}$ can be obtained by one BGN-type step of size $\tau/2$ \cite{JiangSuZhangZhang2025}. For the two-stage Radau IIA method and $m\geq2$, we use quadratic extrapolation,
\begin{equation}\label{eq:radau-two-predictor}
\widetilde{\boldsymbol X}^{m,i}\circ\boldsymbol F_j^m=\frac{(c_i+1)(c_i+2)}{2}\boldsymbol F_j^m-c_i(c_i+2)\boldsymbol F_j^{m-1}+\frac{c_i(c_i+1)}{2}\boldsymbol F_j^{m-2}.
\end{equation}
For Radau IIA, the steps $m=0,1$ can be computed with the corresponding Midpoint method or fully
nonlinear scheme, \eqref{eq:direct-rk-scheme} or
\eqref{eq:equi-rk-scheme}, to initialize the extrapolation.

\section{Extension to volume-preserving mean curvature flow}
\label{sec:vpmcf-extension}

With the above parametrization, volume-preserving mean curvature flow (VP-MCF) is given by
\begin{equation}
\label{eq:vpmcf}
\begin{cases}
\boldsymbol n\cdot(\partial_t\boldsymbol X_t\circ\boldsymbol X_t^{-1})
=-(\mathcal H-\overline{\mathcal H}),\\[2mm]
\mathcal H\boldsymbol n
=-\Delta_{\Gamma(t)}\operatorname{id}_{\Gamma(t)},
\end{cases}
\qquad
\text{on }\Gamma(t),\quad t\in[0,T],
\end{equation}
where $\overline{\mathcal H}:=|\Gamma(t)|^{-1}\int_{\Gamma(t)}\mathcal H\,\mathrm d\Gamma(t)$.
Since only the velocity law differs from SD, the admissible parametrization
conditions and the equivalence results in
\cref{thm:conformal-equivalence,thm:equi-equivalence} remain unchanged, so
both formulations extend directly to VP-MCF.

\subsection{Weak formulations}

Fix $s\in[0,T)$. We use the geometric quantities defined in \cref{sec:unified-weak,sec:equi-weak}. Since $\mathrm d\Gamma(t)=J\,\mathrm d\Gamma(s)$ and $\boldsymbol N=J\boldsymbol n$, the velocity law pulls back to
$\boldsymbol N\cdot\partial_t\boldsymbol X_{s,t}=-J(\mathcal H-\overline{\mathcal H})$, where
\begin{equation}
\overline{\mathcal H}
:=\frac{(J\mathcal H,1)_{\Gamma(s)}}{(J,1)_{\Gamma(s)}}.
\label{eq:vpmcf-pullback-mean}
\end{equation}

\paragraph{Conformal formulation}
Here $\boldsymbol X_{s,t}$ is equidistributed for $d=2$ and conformal between
closed genus-zero surfaces for $d=3$. The Jacobian therefore satisfies
\begin{equation*}
J=\begin{cases}
\overline J,&d=2,\\[1mm]
\frac12|\nabla_{\Gamma(s)}\boldsymbol X_{s,t}|^2,&d=3,
\end{cases}
\end{equation*}
by \eqref{eq:equi-jacobian} and \eqref{eq:conformal-jacobian}. The weak conformal formulation is: find $(\boldsymbol X_{s,t},\mathcal H)\in[H^1(\Gamma(s))]^d\times H^1(\Gamma(s))$, with $\boldsymbol X_{s,s}=\operatorname{id}_{\Gamma(s)}$, such that, for $t\in(s,T]$,
\begin{subequations}
\label{eq:conformal-vpmcf-weak}
\begin{align}
&(\boldsymbol N\cdot\partial_t\boldsymbol X_{s,t},\phi)_{\Gamma(s)}
=-\bigl(J(\mathcal H-\overline{\mathcal H}),\phi\bigr)_{\Gamma(s)},
&&\forall\phi\in H^1(\Gamma(s)),
\label{eq:conformal-vpmcf-weak-normal}\\[2mm]
&\overline J^{3-d}(\mathcal H\boldsymbol N,\boldsymbol\omega)_{\Gamma(s)}
=\langle\nabla_{\Gamma(s)}\boldsymbol X_{s,t},
\nabla_{\Gamma(s)}\boldsymbol\omega\rangle_{\Gamma(s)},
&&\forall\boldsymbol\omega\in[H^1(\Gamma(s))]^d.
\label{eq:conformal-vpmcf-weak-curvature}
\end{align}
\end{subequations}

\paragraph{Equidistribution formulation}
Here $J=\overline J$ on $\Gamma(s)$, and hence \eqref{eq:vpmcf-pullback-mean} reduces to $\overline{\mathcal H}=(\mathcal H,1)_{\Gamma(s)}/(1,1)_{\Gamma(s)}$. The weak equidistribution formulation is: find $(\boldsymbol X_{s,t},\mathcal H)\in[H^1(\Gamma(s))]^d\times H^1(\Gamma(s))$, with $\boldsymbol X_{s,s}=\operatorname{id}_{\Gamma(s)}$, such that, for $t\in(s,T]$,
\begin{subequations}
\label{eq:equi-vpmcf-weak}
\begin{align}
&(\boldsymbol N\cdot\partial_t\boldsymbol X_{s,t},\phi)_{\Gamma(s)}
=-\overline J(\mathcal H-\overline{\mathcal H},\phi)_{\Gamma(s)},
&&\forall\phi\in H^1(\Gamma(s)),
\label{eq:equi-vpmcf-weak-normal}\\[2mm]
&\overline J(\mathcal H\boldsymbol N,\boldsymbol\omega)_{\Gamma(s)}
=\bigl\langle(\nabla_{\Gamma(s)}\boldsymbol X_{s,t})\boldsymbol C_{s,t},
\nabla_{\Gamma(s)}\boldsymbol\omega\bigr\rangle_{\Gamma(s)},
&&\forall\boldsymbol\omega\in[H^1(\Gamma(s))]^d.
\label{eq:equi-vpmcf-weak-curvature}
\end{align}
\end{subequations}
 
\begin{theorem}[Preservation and dissipation]
\label{thm:vpmcf-continuous-structure}
A sufficiently smooth solution $(\boldsymbol X_{s,t},\mathcal H)$ of either \eqref{eq:conformal-vpmcf-weak} or \eqref{eq:equi-vpmcf-weak} satisfies
\begin{equation*}
|\Omega(t_2)|=|\Omega(t_1)|,
\qquad
|\Gamma(t_2)|\leq|\Gamma(t_1)|,
\qquad
s\leq t_1\leq t_2\leq T.
\end{equation*}
\end{theorem}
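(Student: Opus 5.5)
The plan is to follow the proof of \cref{thm:unified-weak-structure} and reuse the two transport identities, namely the first equalities in \eqref{eq:unified-volume}--\eqref{eq:unified-dissipation}:
\[
\frac{\mathrm d}{\mathrm dt}|\Omega(t)|=(\boldsymbol N\cdot\partial_t\boldsymbol X_{s,t},1)_{\Gamma(s)},
\qquad
\frac{\mathrm d}{\mathrm dt}|\Gamma(t)|=(\mathcal H\boldsymbol N,\partial_t\boldsymbol X_{s,t})_{\Gamma(s)}.
\]
These identities depend only on $\boldsymbol N=J\boldsymbol n$ and the change of variables. They do not depend on the velocity law, so they hold for both VP-MCF formulations. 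The only work is to evaluate their right-hand sides using the weak equations, and then integrate over $[t_1,t_2]$.

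\emph{Volume.} I will test the normal equation with $\phi=1$.
\begin{itemize}
\item For \eqref{eq:conformal-vpmcf-weak-normal} this gives $-(J\mathcal H,1)_{\Gamma(s)}+\overline{\mathcal H}(J,1)_{\Gamma(s)}$, which vanishes by the definition \eqref{eq:vpmcf-pullback-mean}.
\item For \eqref{eq:equi-vpmcf-weak-normal} it gives $-\overline J\bigl[(\mathcal H,1)_{\Gamma(s)}-\overline{\mathcal H}|\Gamma(s)|\bigr]=0$, using the reduced mean in the equidistributed case.
\end{itemize}
In both cases $\frac{\mathrm d}{\mathrm dt}|\Omega(t)|=0$.

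\emph{Energy.} I will test the curvature equation with $\boldsymbol\omega=\partial_t\boldsymbol X_{s,t}$ and the normal equation with $\phi=\mathcal H$. The key point is to use the normal equation directly, so the tangential part of the right-hand side of the curvature equation is never needed.

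For the conformal formulation, the transport identity and \eqref{eq:conformal-vpmcf-weak-normal} with $\phi=\mathcal H$ give
\[
\frac{\mathrm d}{\mathrm dt}|\Gamma(t)|=(\mathcal H,\boldsymbol N\cdot\partial_t\boldsymbol X_{s,t})_{\Gamma(s)}=-\bigl(J(\mathcal H-\overline{\mathcal H}),\mathcal H\bigr)_{\Gamma(s)}.
\]
Since $(J(\mathcal H-\overline{\mathcal H}),1)_{\Gamma(s)}=0$, I can subtract $\overline{\mathcal H}$ times this quantity. This yields $-(J(\mathcal H-\overline{\mathcal H})^2,1)_{\Gamma(s)}\le 0$, because $J>0$.

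The equidistribution case is the same computation with $J=\overline J$. It gives $-\overline J\|\mathcal H-\overline{\mathcal H}\|^2_{L^2(\Gamma(s))}\le0$.

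I expect no real obstacle. The one point needing care is the mean-subtraction step, which needs the weighted orthogonality $(J(\mathcal H-\overline{\mathcal H}),1)_{\Gamma(s)}=0$. The weight $J$ in \eqref{eq:vpmcf-pullback-mean} is exactly what makes this hold, and in the conformal case $J$ varies in space for $d=3$. The final step is to integrate both derivative statements over $[t_1,t_2]$.
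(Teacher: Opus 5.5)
Your proposal is correct and follows the paper's proof: use the first equalities of \eqref{eq:unified-volume}--\eqref{eq:unified-dissipation}, test the normal equation with $\phi=1$ and $\phi=\mathcal H$, use the weighted orthogonality $(J(\mathcal H-\overline{\mathcal H}),1)_{\Gamma(s)}=0$ to obtain $-(J(\mathcal H-\overline{\mathcal H}),\mathcal H-\overline{\mathcal H})_{\Gamma(s)}\le 0$, and integrate over $[t_1,t_2]$. One clarification: the test of the curvature equation with $\boldsymbol\omega=\partial_t\boldsymbol X_{s,t}$ that you announce is never used, and the curvature equation, including its tangential part, is not dispensable, since it is what identifies the unknown $\mathcal H$ with the true mean curvature of $\Gamma(t)$ (via \cref{thm:conformal-equivalence,thm:equi-equivalence}) so that the first equality in \eqref{eq:unified-dissipation} holds; the paper leaves this step implicit as well.
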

 
\begin{proof}
By definition of $\overline{\mathcal H}$,
$(J(\mathcal H-\overline{\mathcal H}),1)_{\Gamma(s)}=0$.
Testing the normal equation of either formulation with $\phi=1$ and
$\phi=\mathcal H$, and using the first equalities in
\eqref{eq:unified-volume}--\eqref{eq:unified-dissipation}, gives
\begin{equation*}
\frac{\mathrm d}{\mathrm dt}|\Omega(t)|=0,
\qquad
\frac{\mathrm d}{\mathrm dt}|\Gamma(t)|
=-\bigl(J(\mathcal H-\overline{\mathcal H}),
\mathcal H-\overline{\mathcal H}\bigr)_{\Gamma(s)}\leq0.
\end{equation*}
Here $J=\overline J$ for the equidistribution formulation. Integration over
$[t_1,t_2]$ proves the result.
\end{proof}
 
\subsection{Structure-preserving discretizations}

\paragraph{Discretization of the conformal formulation}
We use the setting of \cref{sec:rk-geometry} and the regular predictors
$\widetilde{\boldsymbol X}^{m,i}$ from \cref{sec:direct-rk-predictor}. Define
$\widetilde J^{m,i}:=\overline J^h(\widetilde{\boldsymbol X}^{m,i})>0$ and set
\begin{equation}
\widetilde{\mathcal J}^{m,i}:=
\begin{cases}
\widetilde J^{m,i},&d=2,\\[1mm]
\frac12|\nabla_{\Gamma^m}\widetilde{\boldsymbol X}^{m,i}|^2,&d=3,
\end{cases}
\qquad
\overline{\mathcal H}^{m,i}
:=\frac{(\widetilde{\mathcal J}^{m,i}\mathcal H^{m,i},1)_{\Gamma^m}}
{(\widetilde{\mathcal J}^{m,i},1)_{\Gamma^m}}.
\label{eq:predicted-vpmcf-jacobian}
\end{equation}
For regular predictors, $\widetilde{\mathcal J}^{m,i}>0$; for $d=2$,
$\overline{\mathcal H}^{m,i}$ is the plain mean of $\mathcal H^{m,i}$ over
$\Gamma^m$. Using the path average \eqref{eq:equi-rk-normal-average}, the
predicted-coefficient conformal scheme is: find
$(\delta\boldsymbol X^{m,i},\mathcal H^{m,i})\in
[\mathbb V_k^m]^d\times\mathbb V_k^m$, $i=1,\ldots,s$, such that
\begin{subequations}
\label{eq:conformal-rk-vpmcf-predicted-scheme}
\begin{align}
&(\overline{\boldsymbol N}^{m,i}\cdot\delta\boldsymbol X^{m,i},\phi^h)_{\Gamma^m}
=-\bigl(\widetilde{\mathcal J}^{m,i}
(\mathcal H^{m,i}-\overline{\mathcal H}^{m,i}),\phi^h\bigr)_{\Gamma^m},
&&\forall\phi^h\in\mathbb V_k^m,
\label{eq:conformal-rk-vpmcf-predicted-normal}\\[2mm]
&(\widetilde J^{m,i})^{3-d}
(\mathcal H^{m,i}\overline{\boldsymbol N}^{m,i},\boldsymbol\omega^h)_{\Gamma^m}
=\langle\nabla_{\Gamma^m}\boldsymbol X^{m,i},
\nabla_{\Gamma^m}\boldsymbol\omega^h\rangle_{\Gamma^m},
&&\forall\boldsymbol\omega^h\in[\mathbb V_k^m]^d.
\label{eq:conformal-rk-vpmcf-predicted-curvature}
\end{align}
\end{subequations}
together with \eqref{eq:stage-values-rk}.
 
\begin{theorem}[Structure preservation of predicted conformal scheme]
\label{thm:conformal-rk-vpmcf-predicted-structure}
Let $d\in\{2,3\}$, let $b_i>0$, and assume exact integration and regular
predictors with $\widetilde J^{m,i}>0$ and
$\widetilde{\mathcal J}^{m,i}>0$. Then
\eqref{eq:conformal-rk-vpmcf-predicted-scheme} satisfies
$|\Omega^{m+1}|=|\Omega^m|$. If, in addition, the Runge--Kutta method is
algebraically stable, then
\begin{equation}
\label{eq:conformal-rk-vpmcf-predicted-energy}
|\Gamma^{m+1}|+
\tau\sum_{i=1}^s b_i(\widetilde J^{m,i})^{3-d}
\bigl(\widetilde{\mathcal J}^{m,i}
(\mathcal H^{m,i}-\overline{\mathcal H}^{m,i}),
\mathcal H^{m,i}-\overline{\mathcal H}^{m,i}\bigr)_{\Gamma^m}
\leq|\Gamma^m|.
\end{equation}
\end{theorem}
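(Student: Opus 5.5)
The plan follows the proofs of \cref{thm:direct-rk-structure} and \cref{thm:predicted-conformal}. Only the velocity law changes, and the weight $\widetilde{\mathcal J}^{m,i}$ in the mean $\overline{\mathcal H}^{m,i}$ is chosen exactly so that the right-hand side of \eqref{eq:conformal-rk-vpmcf-predicted-normal} has zero mean.

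\emph{Volume preservation.} We take $\phi^h=1$ in \eqref{eq:conformal-rk-vpmcf-predicted-normal}. By the definition \eqref{eq:predicted-vpmcf-jacobian},
\begin{equation*}
\bigl(\widetilde{\mathcal J}^{m,i}(\mathcal H^{m,i}-\overline{\mathcal H}^{m,i}),1\bigr)_{\Gamma^m}
=(\widetilde{\mathcal J}^{m,i}\mathcal H^{m,i},1)_{\Gamma^m}-\overline{\mathcal H}^{m,i}(\widetilde{\mathcal J}^{m,i},1)_{\Gamma^m}=0,
\end{equation*}
which is well defined because $(\widetilde{\mathcal J}^{m,i},1)_{\Gamma^m}>0$. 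Hence $(\overline{\boldsymbol N}^{m,i}\cdot\delta\boldsymbol X^{m,i},1)_{\Gamma^m}=0$ for every $i$. The transport identity \eqref{eq:equi-rk-volume-increment} depends only on the collocation polynomial \eqref{eq:equi-rk-reconstruction} and the path average \eqref{eq:equi-rk-normal-average}, so it applies unchanged and gives $|\Omega^{m+1}|=|\Omega^m|$. Here $b_i>0$ is used only so that \eqref{eq:equi-rk-normal-average} is defined.

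\emph{Energy dissipation.} The first inequality and the equality in \eqref{eq:direct-rk-quadratic-identity} involve only $\boldsymbol X^{m+1}$, the stage relations \eqref{eq:stage-values-rk}, and the pointwise bounds $J^h\le\frac12(|\nabla\boldsymbol X^{m+1}|^2+1)$ for $d=2$ and $J^h\le\frac12|\nabla\boldsymbol X^{m+1}|^2$ for $d=3$. They therefore hold verbatim. With algebraic stability, $\mathsf M$ is positive semidefinite and we obtain
\begin{equation*}
|\Gamma^{m+1}|-|\Gamma^m|\le\tau\sum_{i=1}^s b_i\langle\nabla_{\Gamma^m}\boldsymbol X^{m,i},\nabla_{\Gamma^m}\delta\boldsymbol X^{m,i}\rangle_{\Gamma^m}.
\end{equation*}
We then test \eqref{eq:conformal-rk-vpmcf-predicted-curvature} with $\boldsymbol\omega^h=\delta\boldsymbol X^{m,i}$ to get $\langle\nabla\boldsymbol X^{m,i},\nabla\delta\boldsymbol X^{m,i}\rangle=(\widetilde J^{m,i})^{3-d}(\mathcal H^{m,i},\overline{\boldsymbol N}^{m,i}\cdot\delta\boldsymbol X^{m,i})_{\Gamma^m}$.

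The key step, and the only point needing care, is to replace $\mathcal H^{m,i}$ by $\mathcal H^{m,i}-\overline{\mathcal H}^{m,i}$ in the last inner product. This is allowed because $\overline{\mathcal H}^{m,i}$ is a constant and $(\overline{\boldsymbol N}^{m,i}\cdot\delta\boldsymbol X^{m,i},1)_{\Gamma^m}=0$ by the volume step. We then test \eqref{eq:conformal-rk-vpmcf-predicted-normal} with $\phi^h=\mathcal H^{m,i}-\overline{\mathcal H}^{m,i}\in\mathbb V_k^m$, which is admissible since constants belong to $\mathbb V_k^m$. This yields
\begin{equation*}
\langle\nabla_{\Gamma^m}\boldsymbol X^{m,i},\nabla_{\Gamma^m}\delta\boldsymbol X^{m,i}\rangle_{\Gamma^m}
=-(\widetilde J^{m,i})^{3-d}\bigl(\widetilde{\mathcal J}^{m,i}(\mathcal H^{m,i}-\overline{\mathcal H}^{m,i}),\mathcal H^{m,i}-\overline{\mathcal H}^{m,i}\bigr)_{\Gamma^m}.
\end{equation*}
Inserting this into the bound above gives \eqref{eq:conformal-rk-vpmcf-predicted-energy}. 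The dissipated term is nonnegative because $\widetilde{\mathcal J}^{m,i}>0$ and $\widetilde J^{m,i}>0$.

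The only subtlety compared with the SD case is that the factor $(\widetilde J^{m,i})^{3-d}$ appears in just one of the two equations. It therefore does not cancel but survives as the weight in the dissipation term. This is harmless since it is positive, and it is exactly why that factor appears in the statement.
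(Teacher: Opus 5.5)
Your proof is correct and follows the paper's argument: the weighted zero mean of $\widetilde{\mathcal J}^{m,i}(\mathcal H^{m,i}-\overline{\mathcal H}^{m,i})$ gives volume preservation via \eqref{eq:equi-rk-volume-increment}, and testing the curvature equation with $\delta\boldsymbol X^{m,i}$ and the normal equation with the stage curvature, combined with \eqref{eq:direct-rk-quadratic-identity} and the positive semidefiniteness of $\mathsf M$, gives the energy bound. The only cosmetic difference is in how the mean is subtracted: the paper tests with $(\widetilde J^{m,i})^{3-d}\mathcal H^{m,i}$ and uses the weighted zero mean, whereas you use $(\overline{\boldsymbol N}^{m,i}\cdot\delta\boldsymbol X^{m,i},1)_{\Gamma^m}=0$ and test with $\mathcal H^{m,i}-\overline{\mathcal H}^{m,i}$, and these are equivalent.
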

 
\begin{proof}
By \eqref{eq:predicted-vpmcf-jacobian},
\begin{equation*}
\bigl(\widetilde{\mathcal J}^{m,i}(\mathcal H^{m,i}
-\overline{\mathcal H}^{m,i}),1\bigr)_{\Gamma^m}=0.
\end{equation*}
Taking $\phi^h=1$ in \eqref{eq:conformal-rk-vpmcf-predicted-normal} and using
\eqref{eq:equi-rk-volume-increment} gives volume preservation. Taking
$\phi^h=(\widetilde J^{m,i})^{3-d}\mathcal H^{m,i}$ in
\eqref{eq:conformal-rk-vpmcf-predicted-normal} and
$\boldsymbol\omega^h=\delta\boldsymbol X^{m,i}$ in
\eqref{eq:conformal-rk-vpmcf-predicted-curvature}, and using
\eqref{eq:predicted-vpmcf-jacobian}, gives
\begin{equation*}
\langle\nabla_{\Gamma^m}\boldsymbol X^{m,i},
\nabla_{\Gamma^m}\delta\boldsymbol X^{m,i}\rangle_{\Gamma^m}
=-(\widetilde J^{m,i})^{3-d}
\bigl(\widetilde{\mathcal J}^{m,i}
(\mathcal H^{m,i}-\overline{\mathcal H}^{m,i}),
\mathcal H^{m,i}-\overline{\mathcal H}^{m,i}\bigr)_{\Gamma^m}.
\end{equation*}
The estimate follows from \eqref{eq:direct-rk-quadratic-identity} and the positive semidefiniteness of $\mathsf M$.
\end{proof}

\begin{remark}[Linearized scheme]
\label{rem:vpmcf-linearized-conformal}
Replacing $\overline{\boldsymbol N}^{m,i}$ additionally by
$\widetilde{\boldsymbol N}^{m,i}$ in both equations of
\eqref{eq:conformal-rk-vpmcf-predicted-scheme} yields a linearized scheme
that retains energy dissipation, although exact volume preservation is
generally lost.
\end{remark}

\paragraph{Discretization of the equidistribution formulation}
Using the path averages \eqref{eq:equi-rk-normal-average} and
\eqref{eq:equi-rk-T} and the regular predictors of
\cref{sec:direct-rk-predictor}, we write
$\overline{\mathcal H}^{m,i}$ for the plain stage mean.
The predicted-coefficient scheme is: find $(\delta\boldsymbol X^{m,i},\mathcal H^{m,i})\in[\mathbb V_k^m]^d\times\mathbb V_k^m$, $i=1,\ldots,s$, such that
\begin{subequations}
\label{eq:equi-rk-vpmcf-predicted-scheme}
\begin{align}
&(\overline{\boldsymbol N}^{m,i}\cdot\delta\boldsymbol X^{m,i},\phi^h)_{\Gamma^m}
=-\widetilde J^{m,i}(\mathcal H^{m,i}-\overline{\mathcal H}^{m,i},\phi^h)_{\Gamma^m},
&&\forall\phi^h\in\mathbb V_k^m,
\label{eq:equi-rk-vpmcf-predicted-normal}\\[2mm]
&\widetilde J^{m,i}(\mathcal H^{m,i}\overline{\boldsymbol N}^{m,i},\boldsymbol\omega^h)_{\Gamma^m}
=\langle\overline{\boldsymbol T}^{m,i},\nabla_{\Gamma^m}\boldsymbol\omega^h\rangle_{\Gamma^m},
&&\forall\boldsymbol\omega^h\in[\mathbb V_k^m]^d,
\label{eq:equi-rk-vpmcf-predicted-curvature}
\end{align}
\end{subequations}
together with \eqref{eq:stage-values-rk}.

\begin{theorem}[Structure preservation of predicted equidistribution scheme]
\label{thm:equi-rk-vpmcf-structure}
Let $d\in\{2,3\}$, let $b_i>0$, and assume exact integration and regular predictors with $\widetilde J^{m,i}>0$. Then the scheme \eqref{eq:equi-rk-vpmcf-predicted-scheme} satisfies $|\Omega^{m+1}|=|\Omega^m|$ and
\begin{equation}
|\Gamma^{m+1}|+
\tau\sum_{i=1}^s b_i(\widetilde J^{m,i})^2
\|\mathcal H^{m,i}-\overline{\mathcal H}^{m,i}\|_{L^2(\Gamma^m)}^2
\leq|\Gamma^m|.
\label{eq:equi-rk-vpmcf-predicted-energy}
\end{equation}
\end{theorem}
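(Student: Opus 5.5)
The proof follows the pattern of \cref{thm:predicted-equi} and \cref{thm:conformal-rk-vpmcf-predicted-structure}, with the area increment estimate \eqref{eq:equi-rk-area-increment} (which needs no algebraic stability) in place of the RK quadratic identity.

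\emph{Volume preservation.} Since $\overline{\mathcal H}^{m,i}$ is the plain mean of $\mathcal H^{m,i}$ over $\Gamma^m$, we have $(\mathcal H^{m,i}-\overline{\mathcal H}^{m,i},1)_{\Gamma^m}=0$. Taking $\phi^h=1$ in \eqref{eq:equi-rk-vpmcf-predicted-normal} therefore gives $(\overline{\boldsymbol N}^{m,i}\cdot\delta\boldsymbol X^{m,i},1)_{\Gamma^m}=0$ for every $i$. Inserting this into the transport identity \eqref{eq:equi-rk-volume-increment} yields $|\Omega^{m+1}|=|\Omega^m|$. That identity relies only on the collocation reconstruction, the path average \eqref{eq:equi-rk-normal-average} and $b_i>0$.

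\emph{Energy estimate.} The bound \eqref{eq:equi-rk-area-increment} depends only on the geometric identities $a-1\le\frac12(a^2-1)$, $\det\boldsymbol G^h(\boldsymbol X^m)=1$, Jacobi's formula \eqref{eq:jacobi-formula}, and the definition \eqref{eq:equi-rk-T}. It does not use the stage equations, so it holds verbatim:
\begin{equation*}
|\Gamma^{m+1}|-|\Gamma^m|\le\tau\sum_{i=1}^s b_i\langle\overline{\boldsymbol T}^{m,i},\nabla_{\Gamma^m}\delta\boldsymbol X^{m,i}\rangle_{\Gamma^m}.
\end{equation*}
It remains to identify each term. Take $\boldsymbol\omega^h=\delta\boldsymbol X^{m,i}$ in \eqref{eq:equi-rk-vpmcf-predicted-curvature}, which gives $\langle\overline{\boldsymbol T}^{m,i},\nabla_{\Gamma^m}\delta\boldsymbol X^{m,i}\rangle_{\Gamma^m}=\widetilde J^{m,i}(\mathcal H^{m,i},\overline{\boldsymbol N}^{m,i}\cdot\delta\boldsymbol X^{m,i})_{\Gamma^m}$. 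Next take $\phi^h=\widetilde J^{m,i}\mathcal H^{m,i}$ in \eqref{eq:equi-rk-vpmcf-predicted-normal} to obtain $-(\widetilde J^{m,i})^2(\mathcal H^{m,i}-\overline{\mathcal H}^{m,i},\mathcal H^{m,i})_{\Gamma^m}$. Because $\mathcal H^{m,i}-\overline{\mathcal H}^{m,i}$ has zero mean, the test function $\mathcal H^{m,i}$ may be replaced by $\mathcal H^{m,i}-\overline{\mathcal H}^{m,i}$, giving $-(\widetilde J^{m,i})^2\|\mathcal H^{m,i}-\overline{\mathcal H}^{m,i}\|^2_{L^2(\Gamma^m)}$. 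Summing with weights $b_i$ yields \eqref{eq:equi-rk-vpmcf-predicted-energy}.

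The only delicate point is ensuring that the predicted scalars are used consistently. The factor $\widetilde J^{m,i}$ is a spatial constant and appears once in each equation, so it can be pulled into the test function. This works because the positive constant $\widetilde J^{m,i}$ makes $\widetilde J^{m,i}\mathcal H^{m,i}$ an admissible test function in $\mathbb V_k^m$, and is what produces the square $(\widetilde J^{m,i})^2$. No algebraic stability is needed, since exactness of the path average $\overline{\boldsymbol T}^{m,i}$ replaces the RK quadratic identity, as remarked after \cref{thm:equi-rk-structure}.
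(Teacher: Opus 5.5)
Your proposal is correct and follows the paper's proof essentially verbatim. Volume preservation comes from the zero mean of $\mathcal H^{m,i}-\overline{\mathcal H}^{m,i}$ with $\phi^h=1$ together with \eqref{eq:equi-rk-volume-increment}, and the energy bound comes from the stage-independent estimate \eqref{eq:equi-rk-area-increment} combined with testing by $\delta\boldsymbol X^{m,i}$ and $\widetilde J^{m,i}\mathcal H^{m,i}$. That test function is just the paper's $\phi^h=\mathcal H^{m,i}$ followed by multiplication by the constant $\widetilde J^{m,i}$; its admissibility needs only linearity of $\mathbb V_k^m$, not positivity of $\widetilde J^{m,i}$.
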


\begin{proof}
Since $\overline{\mathcal H}^{m,i}$ is the stage mean,
\begin{equation*}
(\mathcal H^{m,i}-\overline{\mathcal H}^{m,i},1)_{\Gamma^m}=0.
\end{equation*}
Taking $\phi^h=1$ in \eqref{eq:equi-rk-vpmcf-predicted-normal} gives
$(\overline{\boldsymbol N}^{m,i}\cdot\delta\boldsymbol X^{m,i},1)_{\Gamma^m}=0$,
and \eqref{eq:equi-rk-volume-increment} yields volume preservation. Taking
$\phi^h=\mathcal H^{m,i}$ and
$\boldsymbol\omega^h=\delta\boldsymbol X^{m,i}$ gives
\begin{equation*}
\langle\overline{\boldsymbol T}^{m,i},\nabla_{\Gamma^m}\delta\boldsymbol X^{m,i}\rangle_{\Gamma^m}
=-(\widetilde J^{m,i})^2
\|\mathcal H^{m,i}-\overline{\mathcal H}^{m,i}\|_{L^2(\Gamma^m)}^2.
\end{equation*}
Here we used
$(\mathcal H^{m,i}-\overline{\mathcal H}^{m,i},
\mathcal H^{m,i})_{\Gamma^m}
=\|\mathcal H^{m,i}-\overline{\mathcal H}^{m,i}\|_{L^2(\Gamma^m)}^2$.
Combining \eqref{eq:equi-rk-area-increment} with the preceding identity
yields \eqref{eq:equi-rk-vpmcf-predicted-energy}. In particular,
algebraic stability is not required.
\end{proof}

The quadrature requirements of Remark~\ref{rem:direct-spatial-quadrature} apply to
both formulations: the normal--Jacobian coupling must be integrated exactly,
and the same positive-weight rule must be used in the stage equations and
the definitions of $\overline{\mathcal H}^{m,i}$. The conformal scheme additionally requires algebraic
stability for energy dissipation.

\section{Numerical experiments}
\label{sec:numerical-tests}

We focus primarily on the conformal formulation in the planar and surface
tests of Examples~1--4. Selected results for the equidistribution
formulation are reported separately in Examples~5--6.

\subsection{Computational setup and diagnostics}
\label{sec:numerical-setting}
All computations are performed in Firedrake with degree-$k$ isoparametric
finite elements. The implicit midpoint and two-stage Radau IIA methods are
denoted by Midpoint and Radau IIA. The stage predictors are those of
\cref{sec:direct-rk-predictor}, with fully nonlinear startup steps. Picard or
Newton iteration is stopped when the Euclidean norm of the algebraic update
is below $10^{-10}$ for curves and $10^{-8}$ for surfaces.

Following Remark \ref{rem:direct-spatial-quadrature}, the spatial quadrature has
degree of precision $3k-1$ for curves and $4k-2$ for surfaces. The path
averages are evaluated exactly in time. For exact algebraic solves, the
preservation defects vanish; computed defects are therefore expected at the
solver-tolerance level.

In our numerical tests, we use the following four initial geometries:
\begin{itemize}
\item \emph{Ellipse:} $(2\cos\alpha,\sin\alpha)$, with $0\leq\alpha<2\pi$.
\item \emph{Flower:} $\bigl((1+0.38\cos(5\alpha))\cos\alpha, (1+0.38\cos(5\alpha))\sin\alpha\bigr)$, with $0\leq\alpha<2\pi$.
\item \emph{Ellipsoid:} $\left(1.5\sin\vartheta\cos\varphi,
\sin\vartheta\sin\varphi,0.8\cos\vartheta\right)$, with
$0\leq\vartheta\leq\pi$ and $0\leq\varphi<2\pi$.
\item \emph{Cuboid:}
$\partial\left([-3,3]\times\left[-\frac12,\frac12\right]
\times\left[-\frac12,\frac12\right]\right)$.
\end{itemize}

For convergence tests, let $\Gamma_{h,\tau}(T)$ denote the numerical hypersurface obtained with spatial mesh size $h$ and time step $\tau$. We use the Hausdorff distance 
\begin{equation}\label{eq:hausdorff-distance}
d_H(\Gamma_1,\Gamma_2):=\max\left\{\sup_{\boldsymbol x\in\Gamma_1}\operatorname{dist}(\boldsymbol x,\Gamma_2),\sup_{\boldsymbol y\in\Gamma_2}\operatorname{dist}(\boldsymbol y,\Gamma_1)\right\},
\end{equation} 
and the successive refinement error is defined as
\begin{equation}\label{eq:numerical-error}
e_{h,\tau}(T):=d_H\bigl(\Gamma_{h,\tau}(T),\Gamma_{\frac{h}{2},\frac{\tau}{2}}(T)\bigr).
\end{equation}
To evaluate \eqref{eq:hausdorff-distance}, each high-order discrete
hypersurface is replaced by a sufficiently fine polyhedral sampling; the
observed orders are computed from consecutive errors.

To assess structure preservation and mesh quality, we record the curve length or surface area $|\Gamma^m|$, the defect in the enclosed area or volume $|\Omega^m|-|\Omega^0|$, and the mesh ratio
\begin{equation}\label{eq:numerical-mesh-ratio}
\mathcal R^m:=
\begin{cases}
\displaystyle\frac{\max_{1\leq j\leq J}|\sigma_j^m|}
{\min_{1\leq j\leq J}|\sigma_j^m|},& d=2,\\[3mm]
\displaystyle\max_{1\leq j\leq J}\frac{R_j^m}{r_j^m},& d=3.
\end{cases}
\end{equation}
Here $|\sigma_j^m|$ is the length of the $j$th curve element, while
$ R_j^m$ and $r_j^m$ are the circumradius and inradius of the triangle
formed by the vertices of the $j$th curved surface element. Smaller values
indicate better mesh quality.

\subsection{Planar tests with the conformal formulation}
\label{sec:numerical-curves}
For SD and VP-MCF, we use the predicted conformal schemes
\eqref{eq:direct-rk-predicted-scheme} and
\eqref{eq:conformal-rk-vpmcf-predicted-scheme}, respectively. 

\paragraph{Example 1: Convergence on an ellipse}
We assess convergence on the ellipse under SD and VP-MCF up to
$T=0.01$. We use Midpoint with spatial degree
$k=1$ and Radau IIA with $k=2$, and couple the refinements according to
$(h,\tau)=(\frac{h_0}{2^\ell},\frac{\tau_0}{2^\ell})$.

\begin{table}[htbp]
\centering
\caption{Hausdorff errors $e_{h,\tau}(T)$ and orders for the ellipse under SD at $T=0.01$, with $(h_0,\tau_0)=(0.125,0.005)$ and $\tau=0.04h$.}
\label{tab:curve-accuracy-sd}
\small
\begin{tabular}{ccccc}
\hline
 & \multicolumn{2}{c}{Midpoint ($k=1$)} & \multicolumn{2}{c}{Radau IIA ($k=2$)}\\
\cline{2-3}\cline{4-5}
$(h,\tau)$ & $e_{h,\tau}(T)$ & order & $e_{h,\tau}(T)$ & order\\
\hline
$(h_0,\tau_0)$ & $1.7716\times10^{-1}$ & -- & $1.9975\times10^{-2}$ & --\\
$(h_0/2,\tau_0/2)$ & $6.1305\times10^{-2}$ & 1.531 & $1.8886\times10^{-3}$ & 3.403\\
$(h_0/2^{2},\tau_0/2^{2})$ & $1.6461\times10^{-2}$ & 1.897 & $2.4904\times10^{-4}$ & 2.923\\
$(h_0/2^{3},\tau_0/2^{3})$ & $4.1980\times10^{-3}$ & 1.971 & $3.1952\times10^{-5}$ & 2.962\\
$(h_0/2^{4},\tau_0/2^{4})$ & $1.0542\times10^{-3}$ & 1.994 & $4.1196\times10^{-6}$ & 2.955\\
\hline
\end{tabular}
\end{table}

\begin{table}[htbp]
\centering
\caption{Hausdorff errors $e_{h,\tau}(T)$ and orders for the ellipse under VP-MCF at $T=0.01$, with $(h_0,\tau_0)=(0.125,0.005)$ and $\tau=0.04h$.}
\label{tab:curve-accuracy-vpmcf}
\small
\begin{tabular}{ccccc}
\hline
 & \multicolumn{2}{c}{Midpoint ($k=1$)} & \multicolumn{2}{c}{Radau IIA ($k=2$)}\\
\cline{2-3}\cline{4-5}
$(h,\tau)$ & $e_{h,\tau}(T)$ & order & $e_{h,\tau}(T)$ & order\\
\hline
$(h_0,\tau_0)$ & $1.6824\times10^{-1}$ & -- & $2.0478\times10^{-2}$ & --\\
$(h_0/2,\tau_0/2)$ & $6.5792\times10^{-2}$ & 1.355 & $3.5079\times10^{-3}$ & 2.545\\
$(h_0/2^{2},\tau_0/2^{2})$ & $2.0069\times10^{-2}$ & 1.713 & $3.6030\times10^{-4}$ & 3.283\\
$(h_0/2^{3},\tau_0/2^{3})$ & $5.3220\times10^{-3}$ & 1.915 & $5.3447\times10^{-5}$ & 2.753\\
$(h_0/2^{4},\tau_0/2^{4})$ & $1.3497\times10^{-3}$ & 1.979 & $7.2005\times10^{-6}$ & 2.892\\
\hline
\end{tabular}
\end{table}

Tables~\ref{tab:curve-accuracy-sd} and~
\ref{tab:curve-accuracy-vpmcf} show that Midpoint approaches second-order
convergence, while Radau IIA exhibits approximately third-order convergence
for both SD and VP-MCF. Under the coupled refinement, these rates indicate
that the predicted-coefficient treatment is consistent with the expected combined space--time accuracy.

\paragraph{Example 2: Evolution of a flower-shaped curve}
For the nonconvex five-lobed curve, Midpoint and Radau IIA are compared with the first-order Bao--Zhao-type
baseline for SD \cite{BaoZhao2021} and its volume-preserving counterpart for
VP-MCF. We take $h=2^{-7}$. For SD,
$T=0.03$ and $\tau=10^{-4}$; for VP-MCF, $T=0.3$ and $\tau=10^{-3}$. In
the Midpoint method with $k=1$, the mass terms are evaluated using the
standard mass-lumped inner product.

\begin{figure}[!htbp]
\centering
\includegraphics[width=\linewidth]{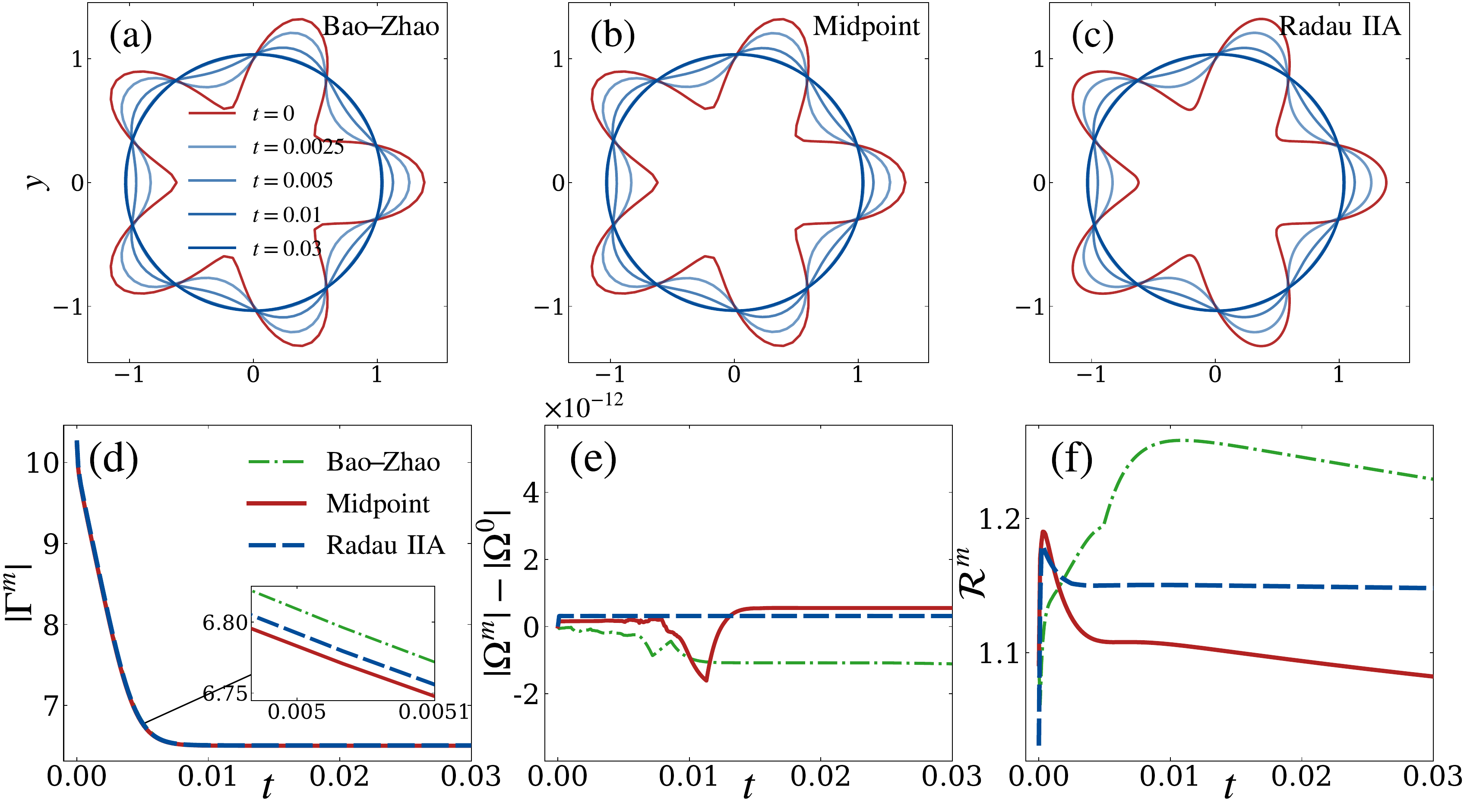}
\caption{SD evolution of a flower-shaped curve. (a)--(c): snapshots obtained with the Bao--Zhao-type baseline, Midpoint, and Radau IIA. The initial curve is red, and successive times are shown in increasingly dark blue. (d)--(f): curve length $|\Gamma^m|$, enclosed-area change $|\Omega^m|-|\Omega^0|$, and mesh ratio $R^m$.}
\label{fig:flower-sd}
\end{figure}

\begin{figure}[!htbp]
\centering

\includegraphics[width=\linewidth]{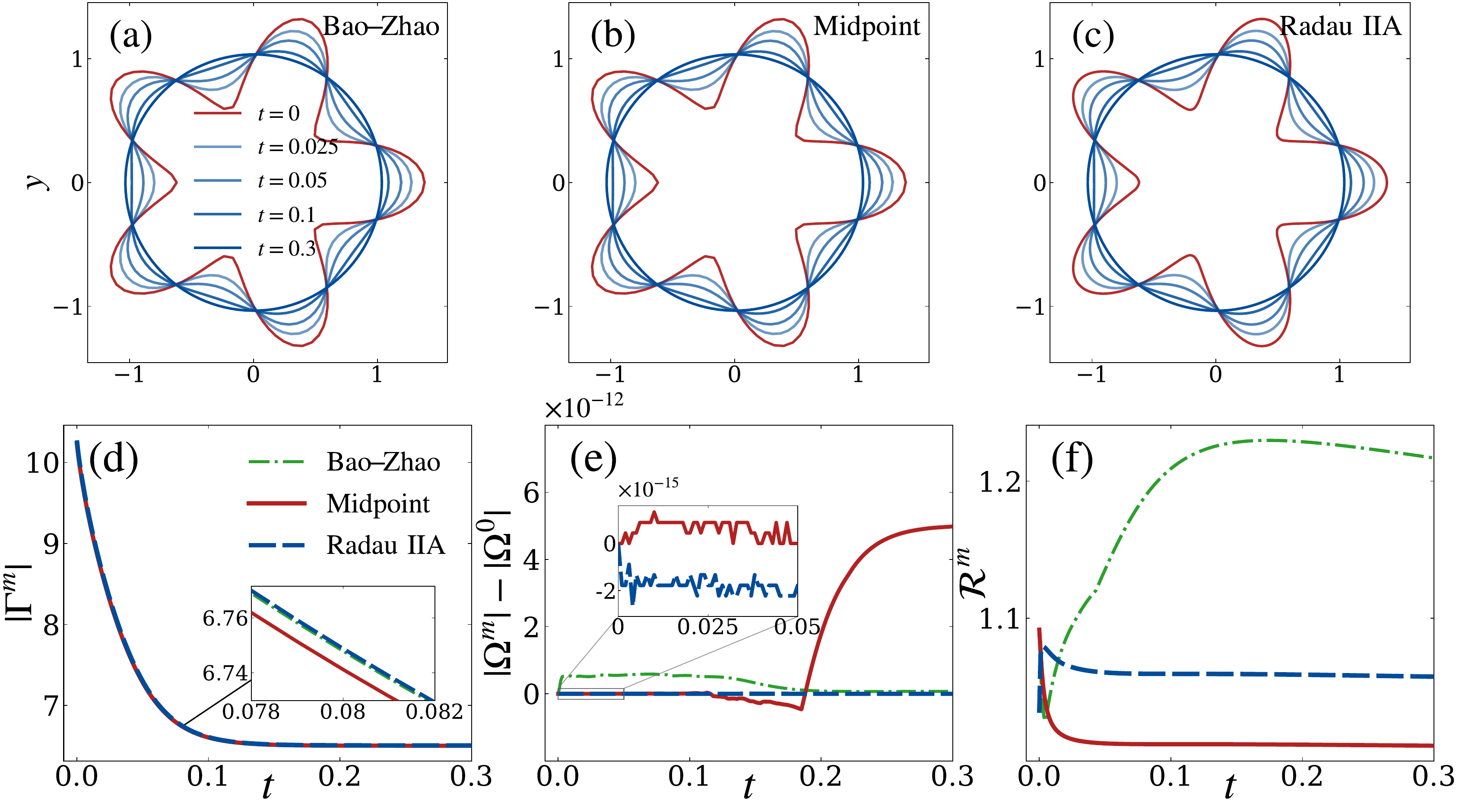}

\caption{VP-MCF evolution of a flower-shaped curve. (a)--(c): snapshots obtained with the Bao--Zhao-type baseline, Midpoint, and Radau IIA. The initial curve is red, and successive times are shown in increasingly dark blue. (d)--(f): curve length $|\Gamma^m|$, enclosed-area change $|\Omega^m|-|\Omega^0|$, and mesh ratio $R^m$.}
\label{fig:flower-vpmcf}
\end{figure}

Figures~\ref{fig:flower-sd} and~\ref{fig:flower-vpmcf} show that all three
methods smooth the lobes toward a circular shape, decrease the curve length,
and preserve the enclosed area to within the solver tolerance. Midpoint and
Radau IIA also maintain lower mesh
ratios than the Bao--Zhao-type baseline over most of the evolution.
Thus the higher-order schemes improve the tangential mesh distribution
without compromising the geometric evolution or the discrete structures.

\subsection{Surface tests with the conformal formulation}
\label{sec:numerical-surfaces}
We use the same predicted conformal schemes on closed surfaces in
$\mathbb R^3$.

\paragraph{Example 3: Convergence on an ellipsoid}
We assess convergence on the ellipsoid under SD and VP-MCF up to $T=0.005$.
We use Midpoint with spatial degree $k=1$ and Radau IIA with $k=2$, and
couple the refinements according to
$(h,\tau)=(\frac{h_0}{2^\ell},\frac{\tau_0}{2^\ell})$. We set
\begin{equation}\label{eq:surface-initial-mesh-size}
h_0:=\max_{1\leq j\leq J}\sqrt{|\sigma_j^0|},\qquad \tau_0=0.005,
\end{equation}
where $\{\sigma_j^0\}_{j=1}^{J}$ is the coarsest initial mesh for each spatial discretization.

\begin{table}[htbp]
\centering
\caption{Hausdorff errors $e_{h,\tau}(T)$ and orders for the ellipsoid under SD at $T=0.005$, with $h_0$ defined in \eqref{eq:surface-initial-mesh-size} and $\tau_0=0.005$.}
\label{tab:surface-accuracy-sd}
\small
\begin{tabular}{ccccc}
\hline
 & \multicolumn{2}{c}{Midpoint ($k=1$)} & \multicolumn{2}{c}{Radau IIA ($k=2$)}\\
\cline{2-3}\cline{4-5}
$(h,\tau)$ & $e_{h,\tau}(T)$ & order & $e_{h,\tau}(T)$ & order\\
\hline
$(h_0,\tau_0)$ & $2.2124\times10^{-1}$ & -- & $2.1876\times10^{-2}$ & --\\
$(h_0/2,\tau_0/2)$ & $6.6141\times10^{-2}$ & 1.742 & $3.6445\times10^{-3}$ & 2.586\\
$(h_0/2^{2},\tau_0/2^{2})$ & $1.7891\times10^{-2}$ & 1.886 & $5.3555\times10^{-4}$ & 2.767\\
$(h_0/2^{3},\tau_0/2^{3})$ & $4.7469\times10^{-3}$ & 1.914 & $8.2118\times10^{-5}$ & 2.705\\
\hline
\end{tabular}
\end{table}

\begin{table}[htbp]
\centering
\caption{Hausdorff errors $e_{h,\tau}(T)$ and orders for the ellipsoid under VP-MCF at $T=0.005$, with $h_0$ defined in \eqref{eq:surface-initial-mesh-size} and $\tau_0=0.005$.}
\label{tab:surface-accuracy-vpmcf}
\small
\begin{tabular}{ccccc}
\hline
 & \multicolumn{2}{c}{Midpoint ($k=1$)} & \multicolumn{2}{c}{Radau IIA ($k=2$)}\\
\cline{2-3}\cline{4-5}
$(h,\tau)$ & $e_{h,\tau}(T)$ & order & $e_{h,\tau}(T)$ & order\\
\hline
$(h_0,\tau_0)$ & $2.2920\times10^{-1}$ & -- & $2.4103\times10^{-2}$ & --\\
$(h_0/2,\tau_0/2)$ & $6.7679\times10^{-2}$ & 1.760 & $2.3474\times10^{-3}$ & 3.360\\
$(h_0/2^{2},\tau_0/2^{2})$ & $1.8065\times10^{-2}$ & 1.905 & $3.3333\times10^{-4}$ & 2.816\\
$(h_0/2^{3},\tau_0/2^{3})$ & $4.8220\times10^{-3}$ & 1.906 & $4.6591\times10^{-5}$ & 2.839\\
\hline
\end{tabular}
\end{table}

Tables~\ref{tab:surface-accuracy-sd} and~
\ref{tab:surface-accuracy-vpmcf} show that, under coupled space--time
refinement, Midpoint approaches second-order convergence and Radau IIA
exhibits a third-order trend for both flows, consistent with the
two-dimensional results. This agreement suggests that the stage prediction
does not visibly reduce the observed convergence order on surfaces.

\paragraph{Example 4: Evolution of a cuboid}
Although the initial $6\times1\times1$ cuboid lies outside the smooth setting
of the analysis, it provides a demanding test of rapid initial smoothing.
Both methods use quadratic elements ($k=2$), time step
$\tau=2\times10^{-4}$, and Newton iteration. We evolve to $T=0.6$ for SD
and $T=2$ for VP-MCF. Under SD, the ends become rounded and enlarge while the
middle remains narrower, producing a dumbbell-like intermediate shape.
Under VP-MCF, the cuboid becomes a rounded elongated surface without a
pronounced waist. In both cases, the initially sharp edges and corners are
smoothed, and the computed surfaces become progressively more rounded.

\begin{figure}[!htbp]
\centering
\includegraphics[width=\linewidth]{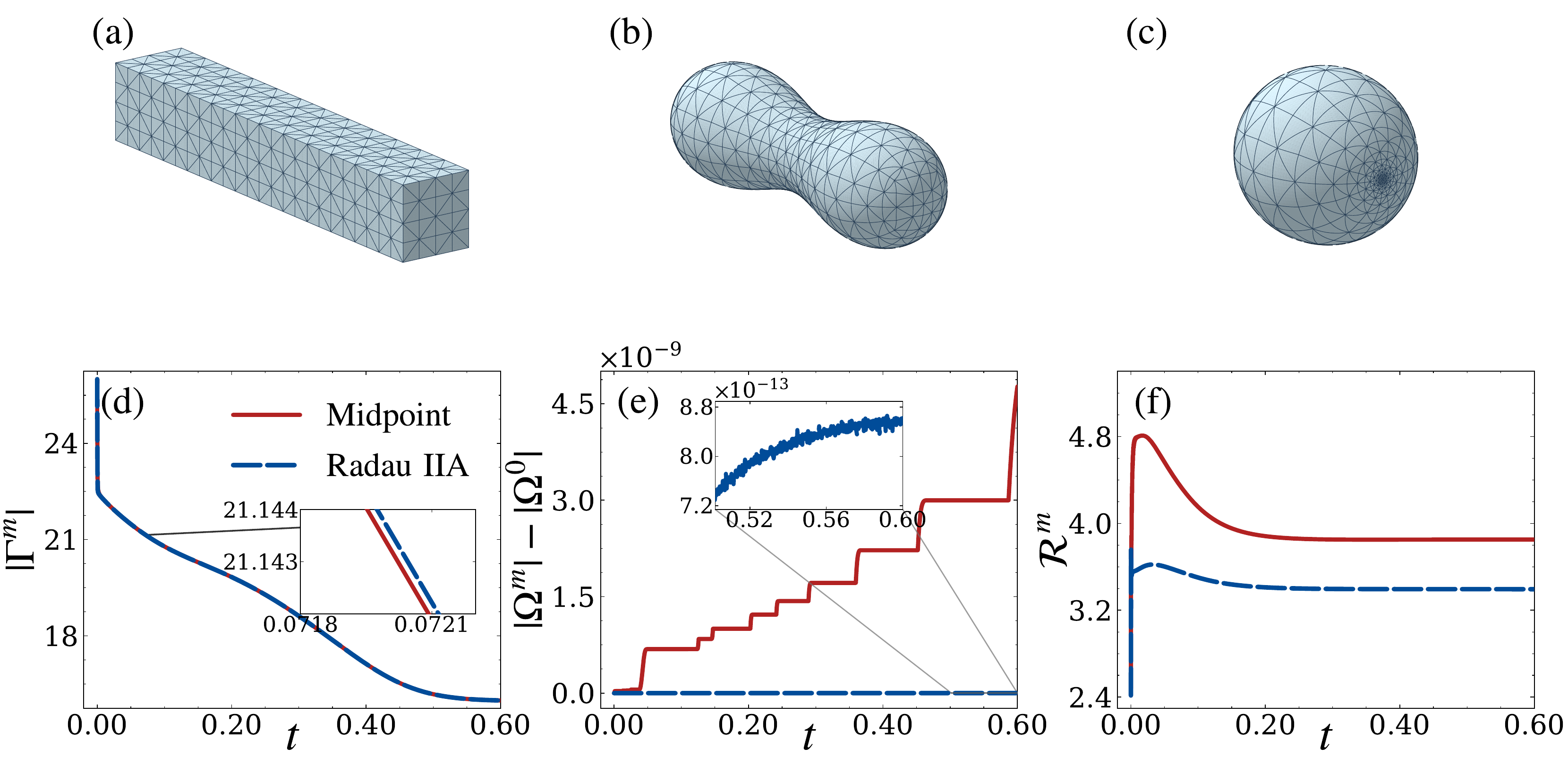}
\caption{SD evolution of a $6\times 1\times 1$ cuboid. (a)--(c): snapshots obtained with Radau IIA at $t=0$, $0.15$, and $0.60$. (d)--(f): surface area $|\Gamma^m|$, enclosed-volume change $|\Omega^m|-|\Omega^0|$, and mesh ratio $R^m$ for Midpoint and Radau IIA.}
\label{fig:cuboid-sd}
\end{figure}

\begin{figure}[!htbp]
\centering
\includegraphics[width=\linewidth]{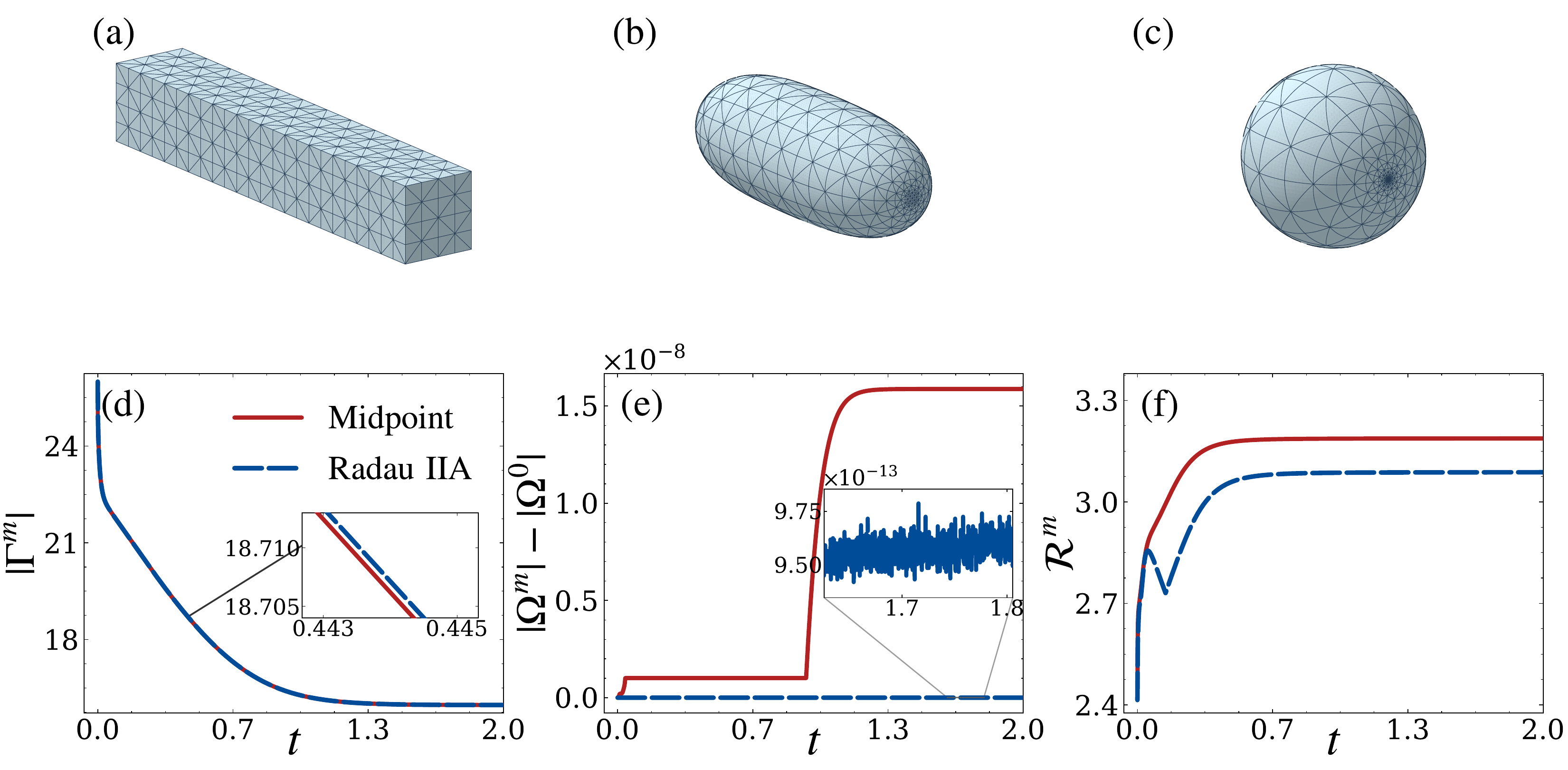}
\caption{VP-MCF evolution of a $6\times 1\times 1$ cuboid. (a)--(c): snapshots obtained with Radau IIA at $t=0$, $0.50$, and $2.00$. (d)--(f): surface area $|\Gamma^m|$, enclosed-volume change $|\Omega^m|-|\Omega^0|$, and mesh ratio $R^m$ for Midpoint and Radau IIA.}
\label{fig:cuboid-vpmcf}
\end{figure}

In Figures~\ref{fig:cuboid-sd}(d)--(e) and~ \ref{fig:cuboid-vpmcf}(d)--(e), the surface-area histories of Midpoint and Radau IIA nearly coincide and decrease monotonically, while the enclosed-volume defects remain of order $10^{-8}$, consistent with the solver tolerance. The mesh-ratio histories in Figures~\ref{fig:cuboid-sd}(f) and~ \ref{fig:cuboid-vpmcf}(f) change during the initial relaxation and then approach a plateau. The two methods give comparable mesh quality under SD, whereas Radau IIA yields lower values at later times for VP-MCF. The plateau indicates that mesh distortion does not continue to accumulate once the initially sharp features have been smoothed.

\subsection{Tests of the equidistribution formulation}
\label{sec:numerical-extension}
We replace the conformal schemes by the
predicted equidistribution schemes \eqref{eq:equi-rk-predicted-scheme} for SD
and \eqref{eq:equi-rk-vpmcf-predicted-scheme} for VP-MCF. In two dimensions,
their Midpoint realizations coincide with
\eqref{eq:direct-rk-predicted-scheme} and
\eqref{eq:conformal-rk-vpmcf-predicted-scheme}, respectively, when the same
stage predictors are used. We therefore use Radau IIA for the planar test
and Midpoint for the surface test.

\paragraph{Example 5: Convergence on the ellipse and ellipsoid}
We repeat the convergence tests for the ellipse with Radau IIA and the
ellipsoid with Midpoint, using the parameters of Examples~1 and~3,
respectively.

\begin{table}[htbp]
\centering
\caption{Hausdorff errors $e_{h,\tau}(T)$ and orders for the ellipse under SD and VP-MCF at $T=0.01$, using the Radau IIA realizations of the predicted equidistribution schemes with $k=2$.}
\label{tab:extension-curve-accuracy}
\small
\begin{tabular}{ccccc}
\hline
 & \multicolumn{2}{c}{SD} & \multicolumn{2}{c}{VP-MCF}\\
\cline{2-3}\cline{4-5}
$(h,\tau)$ & $e_{h,\tau}(T)$ & order & $e_{h,\tau}(T)$ & order\\
\hline
$(h_0,\tau_0)$ & $1.9888\times10^{-2}$ & -- & $2.0258\times10^{-2}$ & --\\
$(h_0/2,\tau_0/2)$ & $1.8969\times10^{-3}$ & 3.390 & $3.5942\times10^{-3}$ & 2.495\\
$(h_0/2^{2},\tau_0/2^{2})$ & $2.5900\times10^{-4}$ & 2.873 & $4.0197\times10^{-4}$ & 3.161\\
$(h_0/2^{3},\tau_0/2^{3})$ & $3.3156\times10^{-5}$ & 2.966 & $5.8217\times10^{-5}$ & 2.788\\
$(h_0/2^{4},\tau_0/2^{4})$ & $4.1806\times10^{-6}$ & 2.987 & $7.5778\times10^{-6}$ & 2.942\\
\hline
\end{tabular}
\end{table}

\begin{table}[htbp]
\centering
\caption{Hausdorff errors $e_{h,\tau}(T)$ and orders for the ellipsoid under SD and VP-MCF at $T=0.005$, using the Midpoint realizations of the predicted equidistribution schemes with $k=1$.}
\label{tab:surface-extension-accuracy}
\small
\begin{tabular}{ccccc}
\hline
 & \multicolumn{2}{c}{SD} & \multicolumn{2}{c}{VP-MCF}\\
\cline{2-3}\cline{4-5}
$(h,\tau)$ & $e_{h,\tau}(T)$ & order & $e_{h,\tau}(T)$ & order\\
\hline
$(h_0,\tau_0)$ & $2.6576\times10^{-1}$ & -- & $2.8351\times10^{-1}$ & --\\
$(h_0/2,\tau_0/2)$ & $6.9351\times10^{-2}$ & 1.938 & $6.8175\times10^{-2}$ & 2.056\\
$(h_0/2^{2},\tau_0/2^{2})$ & $1.8143\times10^{-2}$ & 1.935 & $1.8274\times10^{-2}$ & 1.899\\
$(h_0/2^{3},\tau_0/2^{3})$ & $4.8494\times10^{-3}$ & 1.904 & $4.8707\times10^{-3}$ & 1.908\\
\hline
\end{tabular}
\end{table}

Tables~\ref{tab:extension-curve-accuracy} and
\ref{tab:surface-extension-accuracy} show that Radau IIA exhibits
approximately third-order convergence for the ellipse, while Midpoint
exhibits approximately second-order convergence for the ellipsoid under both
flows. These results are consistent with the corresponding predicted
conformal schemes in Examples~1 and~3 under coupled space--time refinement.

\paragraph{Example 6: Evolution of an ellipsoid}
We evolve the ellipsoid under SD up to $T=0.3$ using the Midpoint realization of \eqref{eq:equi-rk-predicted-scheme}, with time step $\tau=0.005$.

\begin{figure}[!htbp]
\centering
\includegraphics[width=\linewidth]{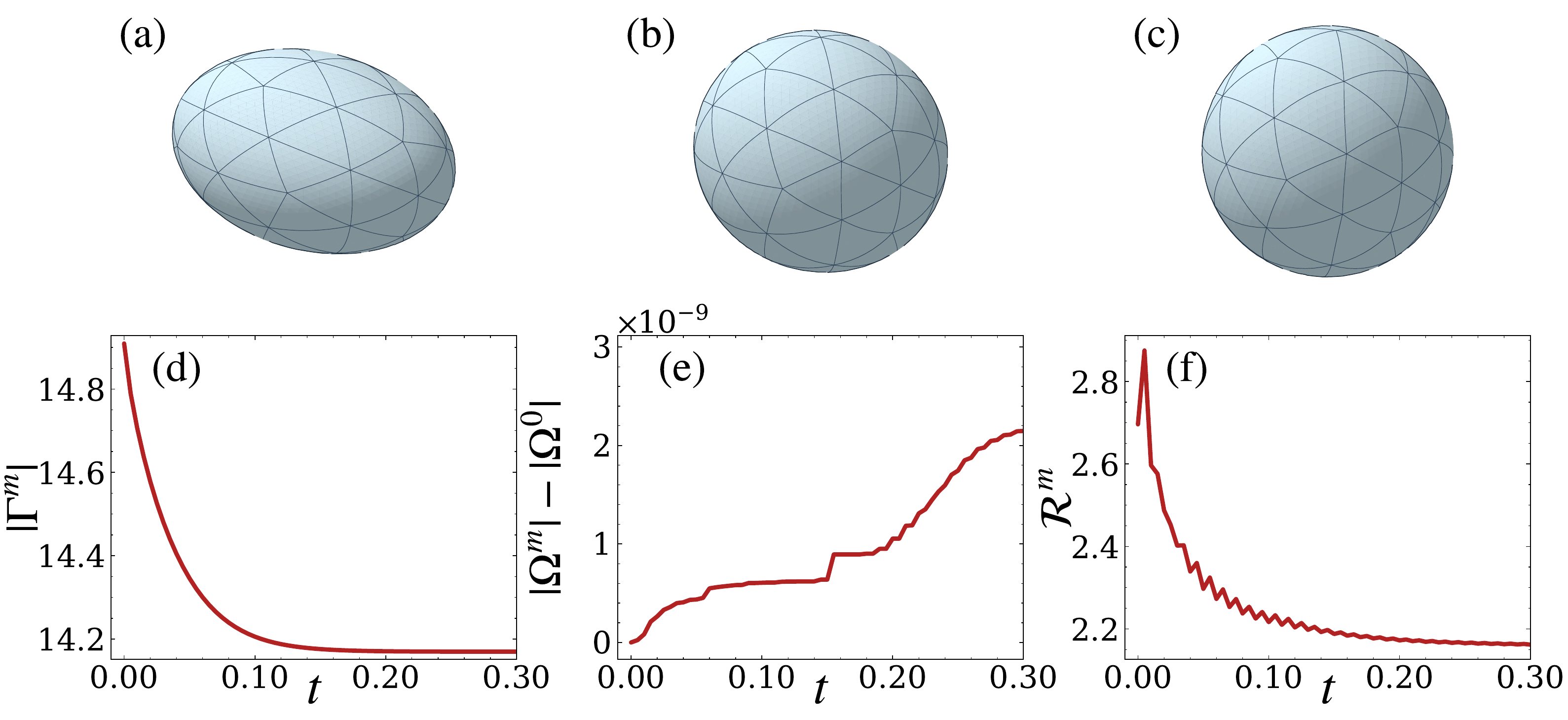}
\caption{SD evolution of the ellipsoid using the Midpoint equidistribution scheme. (a)--(c): snapshots at $t=0$, $0.1$, and $0.3$, respectively. (d)--(f): surface area $|\Gamma^m|$, enclosed-volume change $|\Omega^m|-|\Omega^0|$, and mesh-quality indicator $R^m$.}
\label{fig:extension-ellipsoid-sd}
\end{figure}

Figure~\ref{fig:extension-ellipsoid-sd} confirms the predicted structure-preserving properties: the surface area decreases monotonically, while the enclosed-volume defect remains below $8\times10^{-9}$, consistent with the $10^{-8}$ solver tolerance. The surface mesh remains visually uniform in element size, and the mesh-quality indicator stays bounded and approaches a plateau after a short initial transient.

\section{Conclusions}
\label{sec:conclusions}
The admissible local pullback framework provides a common geometric setting for the high-order discretization of SD and VP-MCF. The classification result shows that, within the considered coordinate-independent and composition-stable class, the available types of admissible local pullbacks are highly restricted: equidistributed local pullbacks are the only option for curves, whereas surfaces admit conformal and equidistributed local pullbacks. The corresponding pullback identities yield the conformal and equidistribution formulations. Combined with isoparametric finite elements and Runge--Kutta methods, both formulations lead to high-order schemes that preserve the enclosed measure and dissipate the geometric energy under their respective conditions. Predicted-coefficient variants reduce the nonlinearity while retaining these structures. The numerical results confirm the expected convergence behavior, structure preservation, and favorable mesh quality. Future work will extend the present framework to the design of high-order structure-preserving methods for a broader class of geometric flows.
\bibliographystyle{siamplain}
\bibliography{references}
\end{document}